\DeclareFontFamily{T1}{cbgreek}{}
\DeclareFontShape{T1}{cbgreek}{m}{n}{<-6>  grmn0500 <6-7> grmn0600 <7-8> grmn0700 <8-9> grmn0800 <9-10> grmn0900 <10-12> grmn1000 <12-17> grmn1200 <17-> grmn1728}{}
\DeclareSymbolFont{quadratics}{T1}{cbgreek}{m}{n}
\DeclareMathSymbol{\qoppa}{\mathord}{quadratics}{19}
\DeclareMathSymbol{\Qoppa}{\mathord}{quadratics}{21}
\theoremstyle{theorem}
\newtheorem{thm}{Theorem}[section]
\newtheorem{mainthm}{Theorem}
\newtheorem{lem}[thm]{Lemma}
\newtheorem{prop}[thm]{Proposition}
\newtheorem{cor}[thm]{Corollary}
\theoremstyle{definition} 
\newtheorem{qu}[thm]{Question}
\newtheorem{rmk}[thm]{Remark}
\theoremstyle{remark} 
\newcommand{\Z}{\mathbb{Z}}
\newcommand{\pair}[1]{\langle #1 \rangle}
\DeclareMathOperator{\Mod}{Mod}
\DeclareMathOperator{\Aut}{Aut}
\newcommand{\rest}[2]{#1\bigr\vert_{#2}}
\date{\today}
\begin{document}

\title{Finite rigid sets and the separating curve complex}

\author{Junzhi Huang and Bena Tshishiku}


\maketitle
	
\begin{abstract}
For each closed surface $\Sigma$ of genus $g\ge3$, we find a finite subcomplex of the separating curve complex that is rigid with respect to incidence-preserving maps. 
\end{abstract}

\section{Introduction}\label{sec:introduction}

This paper is about rigidity properties of the complex $\mathcal C^s(\Sigma)$ of separating curves on a closed, oriented surface $\Sigma$ of genus $g\ge3$. 

For a subcomplex $Z\subset\mathcal C^s(\Sigma)$, a simplicial map $\phi:Z\rightarrow\mathcal C^s(\Sigma)$ is said to be \emph{incidence preserving} if $i(\alpha,\beta)\neq0$ implies that $i(\phi(\alpha),\phi(\beta))\neq0$ for vertices $\alpha,\beta$ of $Z$. Here $i(\cdot,\cdot)$ denotes the geometric intersection number (see \S\ref{sec:background} for definitions).


The goal of this paper is to prove the following result.

\begin{mainthm}[Finite rigid set of separating curves]\label{thm:main}
Fix a closed, oriented surface $\Sigma$ of genus $g\ge3$. 
There is a finite subcomplex $Y^s\subset\mathcal C^s(\Sigma)$ such that any incidence-preserving map $\phi:Y^s\rightarrow\mathcal C^s(\Sigma)$ is induced by an extended mapping class, i.e.\ there exists $h\in\Mod^\pm(\Sigma)$ such that $\rest{h}{Y^s}=\phi$. Furthermore, $h$ is unique. 
\end{mainthm}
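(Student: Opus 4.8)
The plan is to follow the by-now-standard template for finite rigidity of curve-complex-type objects, adapted to the constraint that every vertex is a \emph{separating} curve. I would first fix a concrete finite configuration of separating curves to serve as $Y^s$, chosen so that it (a) \emph{fills} $\Sigma$, i.e.\ its complement is a union of disks, (b) contains a representative of every topological type of separating curve --- one bounding a genus-$h$ subsurface for each $1\le h\le\lfloor g/2\rfloor$ --- and (c) is rich enough that each of its curves is distinguished from the others by its pattern of intersections and disjointnesses within $Y^s$. The argument then proceeds in three stages: (i) show that any incidence-preserving $\phi$ preserves the essential combinatorial data of the configuration, namely the topological type of each curve and which pairs intersect; (ii) invoke the change-of-coordinates principle to produce a candidate $h\in\Mod^\pm(\Sigma)$ agreeing with $\phi$ on a filling ``core'' subconfiguration; and (iii) bootstrap to agreement on all of $Y^s$ and deduce uniqueness via the Alexander method.

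Stage (i) is the heart of the matter, and I expect it to be the main obstacle. The incidence-preserving hypothesis is deliberately weak, asserting only that $i(\alpha,\beta)\neq0$ implies $i(\phi\alpha,\phi\beta)\neq0$; but combined with simpliciality it already yields a clean dichotomy --- disjoint curves map to disjoint-or-equal curves and intersecting curves map to intersecting curves --- so $\phi$ very nearly preserves the full intersection relation. Starting from this, I would develop a purely combinatorial recognition of the topological type of a separating curve in terms of how its link sits inside $Y^s$. The cleanest case is the genus-$1$ type: a curve $c$ bounds a once-holed torus precisely when no separating curve of $\Sigma$ lies in the subsurface it cuts off (a once-holed torus carries no essential separating curve), and this should translate into an extremal property of the link of $c$ that is visibly preserved by $\phi$. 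Curves of higher type, and the relation of two disjoint separating curves cobounding a subsurface of prescribed type, admit analogous but more delicate characterizations. Verifying these recognition principles --- and simultaneously checking that $Y^s$ is large enough for them to pin down $\phi$ on every vertex, yet small enough to stay finite --- is where the real work lies.

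Granting stage (i), stages (ii) and (iii) are comparatively routine. Once $\phi$ is known to preserve topological types and the intersection pattern, $\phi(Y^s)$ is a configuration combinatorially identical to $Y^s$, so by the change-of-coordinates principle there is $h\in\Mod^\pm(\Sigma)$ carrying each core curve to its $\phi$-image. Replacing $\phi$ by $h^{-1}\circ\phi$, we may assume $\phi$ fixes the core; using the recognition principles of stage (i) I would then argue that each remaining vertex of $Y^s$ is the unique separating curve with its prescribed incidence pattern against the core, so that $\phi$ fixes it as well and $\rest{h}{Y^s}=\phi$. For uniqueness, if $h_1$ and $h_2$ both restrict to $\phi$ then $h_1 h_2^{-1}$ fixes every isotopy class in the filling set $Y^s$, whence $h_1=h_2$ by the Alexander method. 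The hypothesis $g\ge3$ should enter exactly here: it is the threshold at which $\mathcal C^s(\Sigma)$ is connected with automorphism group $\Mod^\pm(\Sigma)$ and at which separating curves of distinct types coexist, giving the recognition arguments of stage (i) enough room to operate.
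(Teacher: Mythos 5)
There is a genuine gap, and it sits at your stage (ii). Knowing that $\phi$ preserves the topological type of each vertex and the incidence relation (intersect vs.\ disjoint) does \emph{not} make $\phi(Y^s)$ ``a configuration combinatorially identical to $Y^s$'' in any sense to which the change-of-coordinates principle applies. That principle classifies $\Mod^\pm(\Sigma)$-orbits of very special configurations: single curves, systems of disjoint curves with prescribed complementary pieces, chains and bounding pairs in which consecutive curves meet exactly once. A filling system of \emph{separating} curves is nothing of the sort: any two intersecting separating curves meet in at least four points, and an incidence-preserving map gives no control whatsoever over the geometric intersection numbers or over how the image curves sit relative to one another. Two systems of separating curves can have identical intersection graphs and identical vertex types and lie in different mapping class group orbits. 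So no $h$ is produced at this stage, and the Alexander method in stage (iii) likewise needs actual minimal-position intersection data, not just the zero/nonzero pattern. This is precisely the obstacle the paper's argument is built to circumvent: it uses the Brendle--Margalit sharing-pair characterization (Lemma \ref{lem:sharing-pair}) to reconstruct from $\phi$ the images of the \emph{nonseparating} chain and bounding-pair curves $C\cup B$, where ``intersection number exactly $1$'' can be certified combinatorially (Lemma \ref{lem:chains}); this yields a locally injective extension $\widetilde\phi\colon Y\to\mathcal C(\Sigma)$ of $\phi$ to a superset of the Aramayona--Leininger rigid set $X\subset\mathcal C(\Sigma)$, and it is \emph{their} theorem (plus the ``uniquely determined'' argument for the genus-$2$ curves in $V$) that finally supplies $h$.

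A secondary problem is in stage (i). The characterization ``$c$ has genus $1$ iff no separating curve lies in the subsurface it cuts off'' quantifies over all of $\mathcal C^s(\Sigma)$, but the link of $c$ in a finite complex $Y^s$ sees only finitely many curves, and $\phi$ maps into the full $\mathcal C^s(\Sigma)$; an extremal property of the link inside $Y^s$ is neither obviously preserved nor reflected by $\phi$. The paper's genus-preservation argument (statement (A) and its extensions (C), (E)) instead exhibits inside $Y^s$ a maximal simplex of $\mathcal C^s(\Sigma)$, shows via auxiliary intersecting curves that $\phi$ keeps the two sides of $\alpha$ on two sides of $\phi(\alpha)$, and uses the count that every maximal simplex of $\mathcal C^s(\Sigma)$ has exactly $g$ genus-$1$ vertices; for the genus-$2$ curves in $V$ it needs the sharing-pair results already in hand, which forces the interleaved bootstrap (A)--(F). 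You correctly flag stage (i) as the hard part, but the mechanism you propose does not close it, and without the detour through nonseparating curves stage (ii) fails outright. (Minor point: separating curves of distinct genus first coexist at $g=4$, not $g=3$; the paper handles $g=3$ separately, where a single vertex is already rigid.)
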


The construction of $Y^s$ is explicit and is given in \S\ref{sec:rigid-set}. We assume $g\ge3$ because the $\mathcal C^s(\Sigma_2)$ is discrete and $\mathcal C^s(\Sigma_1)$ is empty. We also note that the case $g=3$ is easy because we can take $Y^s$ to be a single vertex. Below we restrict to $g\ge4$, an assumption necessary for our argument.

\begin{rmk}[Incidence preserving vs.\ superinjective]\label{rmk:superinjective}
Incidence-preserving maps $\mathcal C^s(\Sigma)\rightarrow\mathcal C^s(\Sigma)$ are typically called \emph{superinjective maps} in the literature. This terminology originates in work of Irmak \cite{irmak}. We do not use the term ``superinjective" because it would be misleading, since for general $Z\subset\mathcal C^s(\Sigma)$, an incidence-preserving map $Z\rightarrow\mathcal C^s(\Sigma)$ need not be injective.
\end{rmk}

\begin{rmk}[Similar rigidity results]
Theorem \ref{thm:main} may be viewed as a ``hybrid" of two results. 
\begin{itemize}
\item In \cite{AL}, Aramayona--Leininger construct a finite subcomplex $X$ of the curve complex $\mathcal C(\Sigma)$ with the property that any locally injective map $X\rightarrow\mathcal C(\Sigma)$ is induced by an extended mapping class. 
\item In \cite{BM}, Brendle--Margalit prove that any incidence-preserving map $\mathcal C^s(\Sigma)\rightarrow\mathcal C^s(\Sigma)$ is induced by an extended mapping class. 
\end{itemize}
These results have as common ancestor Ivanov's rigidity result $\Aut(\mathcal C(\Sigma))\cong\Mod^\pm(\Sigma)$ (see \cite{ivanov}), which itself is motivated by the fundamental theorem of projective geometry. 

For other results about finite rigid subcomplexes, see Shinkle's work \cite{shinkle,shinkle2}. 

One may view Theorem \ref{thm:main} (and the similar aforementioned rigidity results) as curve-complex analogues of the fact that an isometry of a Riemannian manifold is determined by its values on a small open set.
\end{rmk}

\begin{rmk}[Different notions of rigidity]
For the subcomplex $Y^s$ constructed in the proof of Theorem \ref{thm:main}, one can ask if any \emph{locally injective} map $Y^s\rightarrow\mathcal C^s(\Sigma)$ is induced by an extended mapping class, in analogy to the result of \cite{AL}. We do not know whether or not this is true, and it seems difficult to prove using the same strategy as \cite{AL}. 
\end{rmk}

Theorem \ref{thm:main} raises two questions: 

\begin{qu}\label{q:homology}
Is the finite rigid subset $Y^s\subset\mathcal C^s(\Sigma)$ produced in Theorem \ref{thm:main} homologically nontrivial in $\mathcal C^s(\Sigma)$? 
\end{qu}

The analogue of Question \ref{q:homology} for the Aramayona--Leininger rigid set $X\subset \mathcal C(\Sigma)$ is asked in \cite[Question 2]{AL} and is answered by Birman--Broaddus--Menasco \cite{BBM}. One difficulty in answering Question \ref{q:homology} is that the homotopy type of $\mathcal C^s(\Sigma)$ is not completely known. Looijenga proved that $\mathcal C^s(\Sigma)$ is $(g-3)$-connected \cite{looijenga}, and Looijenga and van der Kallen \cite{LvdK} proved that the quotient $\mathcal C^s(\Sigma)/\mathcal I_g$ by the Torelli group is homotopy equivalent to a wedge of $(g-2)$-dimensional spheres. The complex $Y^s$ that we define has dimension $2g-4$, which is equal to the dimension of $\mathcal C^s(\Sigma)$. 

In \S\ref{sec:genus3}, we construct a rigid set in the genus-3 separating curve complex that is homotopy equivalent to a wedge of 21 circles. This gives a very concrete example to test Question \ref{q:homology}.

\begin{qu}\label{q:diameter}
Does there exist finite rigid subsets of $\mathcal C^s(\Sigma)$ of arbitrarily large diameter? 
\end{qu}

The analogue of Question \ref{q:diameter} for $\mathcal C(\Sigma)$ is asked in \cite[Question 1]{AL} and answered affirmatively in \cite{AL2}.



\subsection{Strategy of the proof}

To define $Y^s$, we first define a subcomplex $Y$ of the (full) curve complex $\mathcal C(\Sigma)$, and then we define $Y^s=Y\cap\mathcal C^s(\Sigma)$ (see \S\ref{sec:rigid-set}). The proof proceeds in the following steps. 




\paragraph{Step 1 (rigidity of $Y$).} We show that any locally injective map $Y\rightarrow\mathcal C(\Sigma)$ is induced by a mapping class (Proposition \ref{prop:Y-rigid}). To prove this, we use that our $Y$ contains the finite rigid set $X\subset\mathcal C(\Sigma)$ of Aramayona--Leininger \cite{AL}. Although rigidity is not generally inherited by supersets, we can show $Y$ is rigid by showing that each element of $Y\setminus X$ is \emph{uniquely determined} (in the sense of \cite[Defn.\ 2.1]{AL}) by a subset of $X$. 

Given Step 1, we would like to show that any incidence-preserving map $\phi:Y^s\rightarrow\mathcal C^s(\Sigma)$ extends to a locally injective map $\widetilde\phi:Y\rightarrow\mathcal C(\Sigma)$. For then Step 1 implies that $\widetilde\phi$, and hence $\phi$, is induced by an extended mapping class $h$. Furthermore, $h$ is unique by the uniqueness part of \cite[Thm.\ 5.1]{AL}.

To define an extension, we need to define $\widetilde\phi(y)$ for nonseparating curves. We do this using \emph{sharing pairs} (defined by Brendle--Margalit \cite[\S4]{BM}), which give a way to specify a nonseparating curve by a pair of genus-1 separating curves. The key to using sharing pairs to define the extension is showing that certain sharing pairs are preserved under incidence-preserving maps and that the image is independent of our choice of the sharing pair.

\paragraph{Step 2 (sharing pairs are preserved).} An incidence-preserving map $\phi:Y^s\rightarrow\mathcal C^s(\Sigma)$ sends certain sharing pairs to sharing pairs (Theorem \ref{thm:sharing-pair}). In order to prove this, we also show that $\phi$ preserves the genus of each curve in $Y^s$ (Theorem \ref{thm:genus}). The proofs of these statements are intertwined: cases of Theorem \ref{thm:sharing-pair} are used to prove cases of Theorem \ref{thm:genus} which are used to prove other cases of Theorem \ref{thm:sharing-pair}, and so on. A key tool in this analysis is the characterization of sharing pairs given in \cite[Lem.\ 4.1]{BM}. 



\paragraph{Step 3 (extending from $Y^s$ to $Y$).} Any incidence-preserving map $\phi:Y^s\rightarrow\mathcal C^s(\Sigma)$ extends to a locally-injective map $\widetilde\phi:Y\rightarrow\mathcal C(\Sigma)$ (Theorem \ref{thm:extend}). Given $\phi$, for $y\in Y\setminus Y^s$, we define $\widetilde\phi(y)$ to be the curve shared by $\phi(\alpha),\phi(\beta)$, where $\alpha,\beta$ are a sharing pair for $y$. We need to prove that $\widetilde\phi$ is well-defined, simplicial, and locally injective. The proofs of these statements build on the results of Step 2. An important ingredient is to show that $\widetilde\phi$ preserves (certain) chains of nonseparating curves (Lem.\ \ref{lem:chains}). 


Steps 2 and 3 are fairly involved and are at the technical heart of the paper. 

\begin{rmk}[Comparison with \cite{BM}]
Steps 2 and 3 above are similar to steps in an argument of Brendle--Margalit \cite[\S4]{BM} (part of which was corrected by Kida \cite[\S5]{kida}) that proves that an incidence-preserving map $\mathcal C^s(\Sigma)\rightarrow\mathcal C^s(\Sigma)$ extends to an incidence preserving map $\mathcal C(\Sigma)\rightarrow\mathcal C(\Sigma)$. 
One difference in replacing the domain $\mathcal C^s(\Sigma)$ by a finite subcomplex $Z$ is that this greatly reduces the flexibility of certain constructions. As a simple example if $\alpha\neq\beta$ are disjoint separating curves, then it is easy to prove that there is another separating curve $\gamma$ with $i(\alpha,\gamma)\neq0$ and $i(\beta,\gamma)=0$. But this fact is not generally true if $\alpha,\beta,\gamma$ are restricted to a finite subcomplex $Z\subset\mathcal C^s(\Sigma)$. This is related to the fact that an incidence-preserving map on a finite subcomplex is not generally injective (c.f.\ Remark \ref{rmk:superinjective}).
\end{rmk}

\paragraph{Acknowledgements.} The authors thank C.\ Leininger and D.\ Margalit for several helpful comments on a draft of this paper. 

\section{Background and terminology}\label{sec:background}

\subsection{Curves and curve complexes} 

An essential simple closed curve $\alpha\subset\Sigma$ is an embedded circle that does not bound a disk. The \emph{curve complex} $\mathcal C(\Sigma)$ is the simplicial complex with a vertex for each isotopy class of essential, simple closed curve on $\Sigma$ and a $k$-simplex for every collection of $(k+1)$ isotopy classes that can be realized disjointly on $\Sigma$. See e.g.\ \cite[\S4.1]{farb-margalit}. Below ``curve" always means ``essential simple closed curve". We will usually not distinguish between a curve an its isotopy class. 

A curve $\alpha\subset\Sigma$ is separating if $\Sigma\setminus\alpha$ is disconnected. The \emph{genus} of a separating curve $\alpha\subset\Sigma$ is the minimum genus of the two components of $\Sigma\setminus\alpha$. The \emph{separating curve complex} $\mathcal C^s(\Sigma)\subset\mathcal C(\Sigma)$ is defined as the subcomplex spanned by  separating curves. 

For a pair of isotopy classes of curves $\alpha,\beta$, the \emph{geometric intersection number} $i(\alpha,\beta)$ is the minimum of $|a\cap b|$ over all representatives $a\in\alpha$ and $b\in\beta$. See \cite[\S1.2]{farb-margalit}. Whenever working with curves $\alpha,\beta\subset\Sigma$, we always assume that they are in \emph{minimal position}, i.e.\ realize the geometric intersection number of their isotopy classes. If $i(\alpha,\beta)=0$, we say that $\alpha$ and $\beta$ are \emph{disjoint}; otherwise we say that $\alpha$ and $\beta$ \emph{intersect}. 

A \emph{chain} is a sequence of curves $(x_1,\ldots,x_n)$ such that 
\[i(x_j,x_k)=\begin{cases}1&|k-j|=1\\0&\text{else}\end{cases}.\]
Given a chain, we use $N(x_1\cup\cdots\cup x_n)$ to denote a regular neighborhood. The boundary $\partial N(x_1\cup\cdots\cup x_n)$ is a simple closed curve when $n$ is even, and a bounding pair when $n$ is odd. We will frequently use chains to define curves on $\Sigma$; furthermore, in our pictures we will often indicate a curve by drawing a chain defining it, rather than drawing the curve itself.

\subsection{Locally injective and incidence-preserving maps} 

A simplicial map $f:Z_1\rightarrow Z_2$ is \emph{locally injective} if the restriction to the star of every vertex is injective. When $Z_2$ is a curve complex and $Z_1\subset Z_2$ is a subcomplex, this criterion in particular implies that if $\alpha\neq \beta$ are disjoint curves, then also $f(\alpha)\neq f(\beta)$ are disjoint curves.

Given a subcomplex $Z\subset\mathcal C^s(\Sigma)$, a simplicial map $\phi:Z\rightarrow\mathcal C^s(\Sigma)$ is \emph{incidence-preserving} if $i(\alpha,\beta)\neq0$ implies that $i(\phi(\alpha),\phi(\beta))\neq0$ for $\alpha,\beta\in Z$. Note that for a simplicial map $i(\alpha,\beta)=0$ implies that $i(\phi(\alpha),\phi(\beta))=0$, so for incidence preserving maps we have $i(\alpha,\beta)=0$ if and only if $i(\phi(\alpha),\phi(\beta))=0$. 

The proof of the following lemma is easy using the definitions; see \cite[Lem.\ 3.1]{irmak}

\begin{lem}\label{lem:injective-criterion}Fix $Z\subset\mathcal C^s(\Sigma)$. Assume that whenever $\alpha,\beta\in Z$ and $i(\alpha,\beta)=0$ then exists $\gamma\in Z$ so that $i(\alpha,\gamma)=0$ and $i(\beta,\gamma)\neq0$. Then any incidence-preserving map $Z\rightarrow\mathcal C^s(\Sigma)$ is injective. 
\end{lem}


\subsection{Sharing pairs and spines}

For a nonseparating curve $y\subset\Sigma$, we say that curves $\alpha_1,\alpha_2$ are a \emph{sharing pair that shares $y$} if (1) $\alpha_1,\alpha_2$ are genus-1 curves; (2) if $\Sigma_j$ denotes the genus-1 subsurfaces bounded by $\alpha_j$, then $\Sigma_1\cap\Sigma_2$ is an annulus containing $y$; and (3) the complement of $\Sigma_1\cup\Sigma_2$ in $\Sigma$ is connected. 

It is often convenient to specify a sharing pair by a choice of \emph{spine}. A triple $(x_1,y,x_2)$ of curves is called a \emph{spine} if $i(x_j,y)=1$ and $i(x_1,x_2)\le1$. If $(x_1,y,x_2)$ is a spine, then $\alpha_i:=\partial N(x_i\cup y)$ defines a sharing pair. We call $(x_1,y,x_2)$ a spine of the sharing pair $\alpha_1$ and $\alpha_2$. For example, a chain of length 3 is the spine of a sharing pair. 

\begin{rmk}
We will only ever need to use spines that are actually chains. Nevertheless, we will continue to use this more general notion to make it easier to reference the results of \cite{BM} that we use.  
\end{rmk} 

The following lemma from \cite[Lem.\ 4.1]{BM} gives a characterization of sharing pairs in terms of genus and intersection data of curves. The subsequent Corollary \ref{cor:sharing-pair} gives a checkable condition for showing that an incidence-preserving map preserves sharing pairs. 

\begin{lem}[Brendle--Margalit]\label{lem:sharing-pair}
Assume $\Sigma$ has genus $\ge4$. Let $\alpha,\beta$ be genus-$1$ separating curves in $\Sigma$. Then $(\alpha,\beta)$ is a sharing pair if and only if there exist separating curves $w,x,y,z$ in $\Sigma$ with the following properties: 
 $z$ bounds a genus-$2$ subsurface $\Sigma_z$; $\alpha$ and $\beta$ are in $\Sigma_z$ and intersect each other; $x$ and $y$ are disjoint; $w$ intersects $z$, but not $a$ and not $b$; $x$ intersects $a$ and $z$, but not $b$; $y$ intersects $b$ and $z$, but not $a$. 
\end{lem}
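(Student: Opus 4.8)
The plan is to prove the two implications separately (identifying the curves $a,b$ of the statement with $\alpha,\beta$). The forward implication is a direct construction: starting from the standard model of a sharing pair, I would exhibit the auxiliary curves $w,x,y,z$ and read off the required incidences from an explicit picture. The backward implication is the substantial one, since there one must recover the entire sharing-pair configuration from nothing but genus information and a list of intersections and non-intersections.

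For the forward direction, let $(\alpha,\beta)$ be a sharing pair sharing the nonseparating curve $c$, with genus-$1$ subsurfaces $\Sigma_\alpha,\Sigma_\beta$ whose intersection is an annulus with core $c$. The union $\Sigma_\alpha\cup\Sigma_\beta$ is then a genus-$1$ subsurface with two boundary components forming a bounding pair, and the four points of $\alpha\cap\beta$ lie in its interior. Enlarging it across one band in its complement yields a genus-$2$ subsurface $\Sigma_z$ with connected complement, and I would set $z=\partial\Sigma_z$. Since $g\ge4$, the complement of $\Sigma_z$ has genus $g-2\ge2$, which leaves room both to perform this enlargement and to guarantee that $z$ genuinely bounds a genus-$2$ subsurface. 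The remaining curves are produced by inspection: $x$ is a separating curve crossing $\alpha$ and leaving $\Sigma_z$ across $z$ while staying clear of the $c$–$\Sigma_\beta$ handle, so that $i(x,\beta)=0$; $y$ is its mirror image relative to $\beta$; and $w$ crosses $z$ in the complementary genus, far from $\alpha\cup\beta$, so that $i(w,\alpha)=i(w,\beta)=0$. Because $x$ and $y$ can be drawn on opposite sides of $c$, they are disjoint, and I would then check all the listed intersection numbers directly from the picture.

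For the backward direction, suppose separating curves $w,x,y,z$ with the stated properties are given, and put everything in minimal position. Let $\Sigma_\alpha,\Sigma_\beta$ be the genus-$1$ subsurfaces bounded by $\alpha,\beta$. Since each has genus $1$ while the complement of $\Sigma_z$ has genus $g-2\ge2$, both $\Sigma_\alpha$ and $\Sigma_\beta$ must lie inside $\Sigma_z$, and each of $\alpha,\beta$ cuts the genus-$2$ surface $\Sigma_z$ into a once-holed torus and a genus-$1$ surface $R_\alpha$ (resp.\ $R_\beta$) with boundary $\alpha\cup z$ (resp.\ $\beta\cup z$). The heart of the argument is to classify, up to the change-of-coordinates principle, the configurations of two such intersecting curves inside $\Sigma_z$; because the ambient genus is only $2$ and $\alpha,\beta$ are in minimal position, there are only finitely many, indexed by $i(\alpha,\beta)$ and by the isotopy types of the arcs of $\beta\cap\Sigma_\alpha$ and $\beta\cap R_\alpha$. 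I would show that the auxiliary curves rule out every configuration except the sharing one: the curves $x$ and $y$, which meet $\alpha$ respectively $\beta$ but not the other and which are disjoint from each other, force the two tori $\Sigma_\alpha,\Sigma_\beta$ to be separately accessible and to overlap only in an annulus, while $w$ — crossing $z$ but missing both $\alpha$ and $\beta$ — together with the genus-$2$ constraint imposed by $z$ forces the complement of $\Sigma_\alpha\cup\Sigma_\beta$ to be connected. This is exactly conditions (1)--(3) in the definition of a sharing pair.

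The main obstacle is precisely this configuration analysis. Two intersecting genus-$1$ separating curves can sit inside a genus-$2$ surface in several inequivalent ways, and the delicate point is to eliminate each non-sharing type using only the \emph{existence} of $w,x,y$ with the prescribed incidences, rather than any explicit description of them. I expect most of the work to lie in the bigon-free arc analysis of how $\beta$ meets $\Sigma_\alpha$ and $R_\alpha$, and in checking case by case that every discarded configuration fails at least one incidence condition, e.g.\ $i(x,y)=0$, or that $x$ meets $\alpha$ and $z$ but not $\beta$, or that $w$ meets $z$ but neither $\alpha$ nor $\beta$. Reformulating $\Sigma_z$ through its hyperelliptic involution, so that separating curves become partitions of the branch points and incidence becomes a combinatorial interleaving condition, is a promising organizing device for making this enumeration finite and essentially mechanical.
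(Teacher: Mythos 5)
First, a bookkeeping point: the paper does not prove this lemma. It is imported verbatim from Brendle--Margalit \cite[Lem.~4.1]{BM} (with $a,b$ denoting $\alpha,\beta$), and the only thing the paper does with it is draw the certifying curves of the forward direction in Figure~\ref{fig:sharing-pair}. So there is no in-paper argument to compare yours against, and your proposal has to stand on its own as a proof of the cited result. Your forward direction is fine in outline: it is the standard construction, and the curves you describe are essentially the ones in Figure~\ref{fig:sharing-pair}.

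The genuine gap is in the backward direction. You propose to enumerate, via the change-of-coordinates principle, all configurations of two intersecting genus-$1$ separating curves inside the one-holed genus-$2$ surface $\Sigma_z$ and then eliminate the non-sharing ones, asserting that ``there are only finitely many, indexed by $i(\alpha,\beta)$ and by the isotopy types of the arcs.'' That finiteness claim is false: $i(\alpha,\beta)$ is unbounded a priori, and for large intersection number the arc systems of $\beta$ in the one-holed torus $T_\alpha$ and in its complement are not captured by a finite list, so the proposed case check is neither finite nor ``essentially mechanical.'' More seriously, you never supply the mechanism by which the hypotheses control $i(\alpha,\beta)$ or the topology of $T_\alpha\cap T_\beta$: the sentences ``$x$ and $y$ \dots force the two tori to overlap only in an annulus'' and ``$w$ \dots forces the complement to be connected'' are precisely the conclusion of the lemma, asserted rather than derived. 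What the hypotheses actually give you, and what a correct proof must start from, is for instance this: $x$ is disjoint from $\beta=\partial T_\beta$ but meets $z$, hence $x$ misses $T_\beta$ entirely while entering $T_\alpha$; symmetrically $y$ misses $T_\alpha$, and $w$ misses $T_\alpha\cup T_\beta$ while crossing $z$. One must then combine these facts with an Euler-characteristic/genus count for $T_\alpha\cup T_\beta$ sitting inside the genus-$2$ surface $\Sigma_z$ (using minimal position to control the components of $T_\alpha\cap T_\beta$) to pin down that the intersection is a single essential annulus and that every complementary piece reaches $z$. As written, your argument does not rule out a configuration with, say, $i(\alpha,\beta)=8$ that happens to admit curves $w,x,y$ with all the listed incidences. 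Either carry out that quantitative step, or do what the paper does and simply cite \cite[Lem.~4.1]{BM}.
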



\begin{figure}[h!]
\labellist
\pinlabel $\alpha$ at 66 2535
\pinlabel $\beta$ at 40 2525
\pinlabel $w$ at 160 2540
\pinlabel $x$ at 200 2548
\pinlabel $y$ at 100 2553
\pinlabel $z$ at 115 2510
\endlabellist
\centering
\includegraphics[scale=.9]{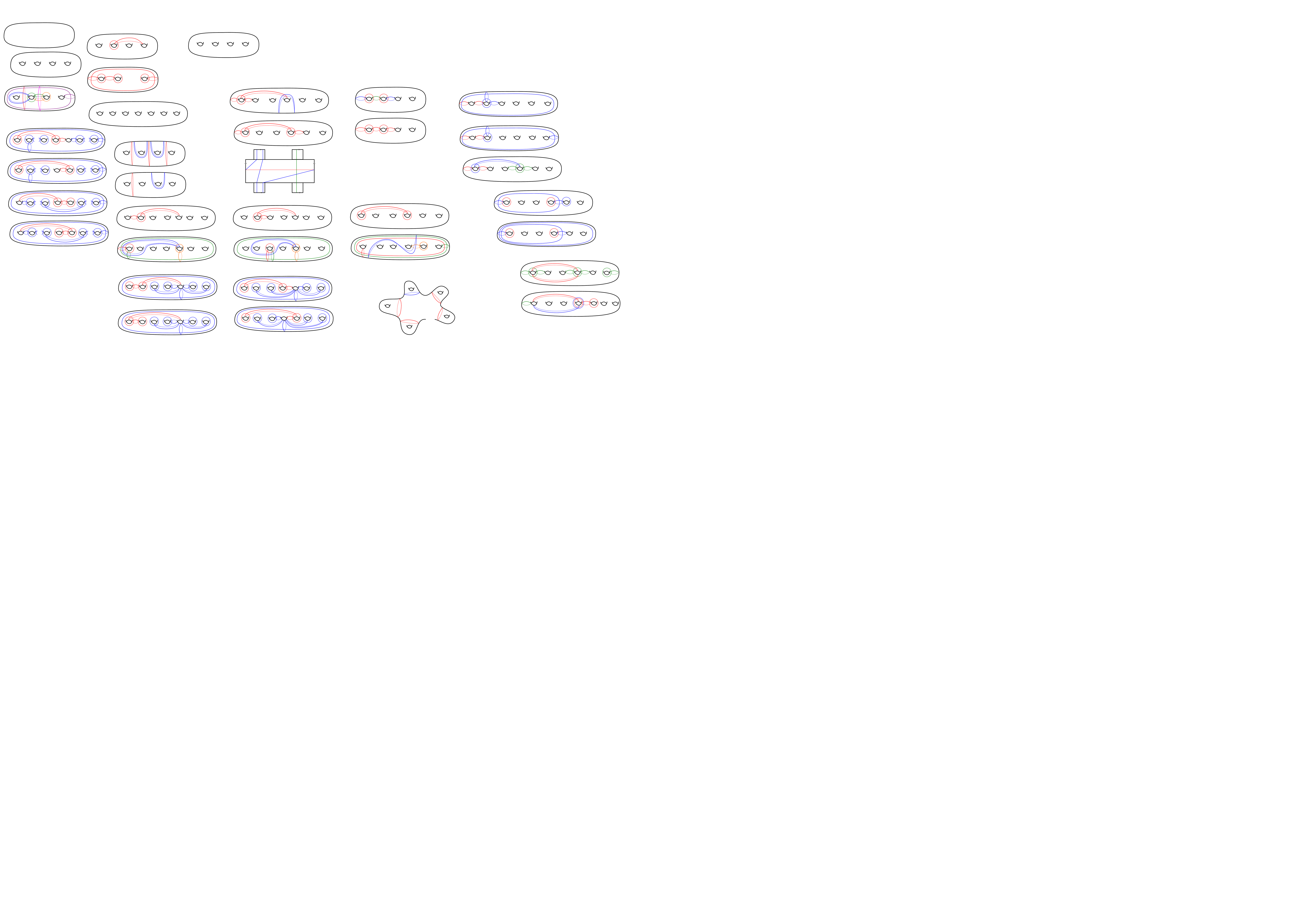}
\caption{Curves $\alpha,\beta$ and  $w,x,y,z$ as in the statement of Lemma \ref{lem:sharing-pair}.}
\label{fig:sharing-pair}
\end{figure}

We say that the curves $w,x,y,z$ \emph{certify} that $(\alpha,\beta)$ are a sharing pair.  

\begin{rmk}
The condition that $z$ has genus 2 is the only place where genus $g\ge4$ is needed in Lemma \ref{lem:sharing-pair} (for $g=3$, all separating curves have genus 1). 
\end{rmk}

\begin{cor}\label{cor:sharing-pair}
Let $Z\subset\mathcal C^s(\Sigma)$ be a subcomplex, and fix an incidence-preserving map $\phi:Z\rightarrow\mathcal C^s(\Sigma)$. Let $\alpha,\beta\in Z$ be a sharing pair certified by $w,x,y,z\in Z$. Suppose that $\phi(\alpha),\phi(\beta)$ are genus-$1$ curves, that $\phi(z)$ is a genus-$2$ curve, and that $\phi(\alpha)$ and $\phi(\beta)$ are in the genus-$2$ subsurface bounded by $\phi(z)$. Then $\phi(\alpha),\phi(\beta)$ is a sharing pair. 
\end{cor}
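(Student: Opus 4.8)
The plan is to verify the hypotheses of the ``if'' direction of Lemma \ref{lem:sharing-pair} for the pair $(\phi(\alpha),\phi(\beta))$, using $\phi(w),\phi(x),\phi(y),\phi(z)$ as the certifying curves. Since we are working under the standing assumption $g\ge4$, Lemma \ref{lem:sharing-pair} applies, and once its hypotheses are checked we may conclude that $(\phi(\alpha),\phi(\beta))$ is a sharing pair.

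First I would separate the two kinds of data that the certification conditions involve: genus/subsurface data and intersection data. The genus data required---that $\phi(\alpha),\phi(\beta)$ are genus-$1$ separating curves, that $\phi(z)$ bounds a genus-$2$ subsurface $\Sigma_{\phi(z)}$, and that $\phi(\alpha),\phi(\beta)$ lie in $\Sigma_{\phi(z)}$---is exactly what the hypotheses of the corollary supply. Moreover, since $\phi$ maps into $\mathcal C^s(\Sigma)$, every image curve $\phi(w),\phi(x),\phi(y),\phi(z)$ is automatically separating, so the requirement that the certifying curves be separating is immediate.

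The remaining conditions are all statements about whether pairs of curves intersect, and these transport directly across $\phi$. The key observation (recorded in \S\ref{sec:background}) is that an incidence-preserving simplicial map satisfies $i(\mu,\nu)=0$ if and only if $i(\phi(\mu),\phi(\nu))=0$ for $\mu,\nu\in Z$; equivalently, $\phi$ preserves both disjointness and non-disjointness. Applying this biconditional to each intersection condition in the certification of $(\alpha,\beta)$ by $w,x,y,z$ yields the corresponding condition for $(\phi(\alpha),\phi(\beta))$: from $i(\alpha,\beta)\neq0$ we get $i(\phi(\alpha),\phi(\beta))\neq0$; from $i(x,y)=0$ we get $i(\phi(x),\phi(y))=0$; and from the three ``mixed'' conditions on $w,x,y$ (each asserting which of $\alpha,\beta,z$ a given curve meets and misses) we get the corresponding three conditions on $\phi(w),\phi(x),\phi(y)$.

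Having verified every condition, the ``if'' direction of Lemma \ref{lem:sharing-pair} gives that $(\phi(\alpha),\phi(\beta))$ is a sharing pair. I do not expect a genuine obstacle here: the corollary is engineered precisely so that the only facts about $\phi$ that are not automatic---namely the genus of $\phi(\alpha),\phi(\beta),\phi(z)$ and the containment $\phi(\alpha),\phi(\beta)\subset\Sigma_{\phi(z)}$---are promoted to hypotheses, while every intersection-theoretic condition is handled uniformly by the incidence-preserving property. The only point requiring a small amount of care is to match each clause of Lemma \ref{lem:sharing-pair} with the correct intersection condition, keeping the roles of $\alpha$ versus $\beta$, and of $x$ versus $y$, straight, so that the transported data certifies $(\phi(\alpha),\phi(\beta))$ rather than some permutation of the roles.
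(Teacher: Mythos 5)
Your proposal is correct and is essentially identical to the paper's own (two-sentence) proof: both reduce to checking the certification conditions of Lemma \ref{lem:sharing-pair} for $\phi(w),\phi(x),\phi(y),\phi(z)$, obtaining the genus and subsurface-containment conditions from the corollary's hypotheses and the intersection conditions from the fact that incidence-preserving maps preserve disjointness and non-disjointness.
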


\begin{proof}
By Lemma \ref{lem:sharing-pair}, it suffices to show that $\phi(x),\phi(y),\phi(z),\phi(w)$ certify that $\phi(\alpha),\phi(\beta)$ is a sharing pair. This is true because $\phi(\alpha)$ and $\phi(\beta)$ are in the genus-2 subsurface bounded by $\phi(z)$ by assumption and because incidence-preserving maps preserve whether two curves are disjoint or intersect. 
\end{proof}


\subsection{(Extended) mapping class group}

The \emph{mapping class group} $\Mod(\Sigma):=\pi_0(\text{Homeo}^+(\Sigma))$ is the group of isotopy classes of orientation-preserving homeomorphisms of $\Sigma$. The \emph{extended mapping class group} $\Mod^\pm(\Sigma)=\pi_0(\text{Homeo}(\Sigma))$ is an index-2 super-group that includes orientation-reversing mapping classes. It is necessary for us to consider $\Mod^\pm(\Sigma)$ because the automorphism group of $\mathcal C(\Sigma)$ is $\Mod^\pm(\Sigma)$ \cite{ivanov}. 

\section{The finite rigid set of separating curves} \label{sec:rigid-set}

In this section we give the construction of the subcomplex $Y^s\subset\mathcal C^s(\Sigma)$ in Theorem \ref{thm:main}. The complex $Y^s$ will be defined as $Y\cap\mathcal C^s(\Sigma)$ for a certain subcomplex $Y\subset\mathcal C(\Sigma)$.

We define $Y$ as the subcomplex spanned by a certain set of vertices. For ease of exposition, we will not distinguish between $Y$ and its set of vertices. In particular, we will define 
\[Y=C\cup S\cup B\cup U\cup V,\]
where $C,S,B,U,V$ are certain collections of curves on $\Sigma$. We proceed now to define these curves. 

%
%

\paragraph{Chain, separating, and bounding pair curves.} 

Let $C=\{c_0,\ldots,c_{2g+1}\}$ denote the chain of curves in Figure \ref{fig:chain}. 


\begin{figure}[h!]
\labellist
\pinlabel $c_0$ at 260 2595
\pinlabel $\cdots$ at 410 2595
\pinlabel $c_1$ at 315 2575
\pinlabel $c_2$ at 340 2605
\pinlabel $c_3$ at 365 2575
\pinlabel $c_{2g-1}$ at 440 2575
\pinlabel $c_{2g}$ at 500 2595
\pinlabel $c_{2g+1}$ at 400 2615
\pinlabel $s_{\{0,1\}}$ at 620 2575
\pinlabel $b_{[0,4]}^+$ at 690 2610
\pinlabel $b_{[0,4]}^-$ at 690 2570
\endlabellist
\centering
\includegraphics[scale=.8]{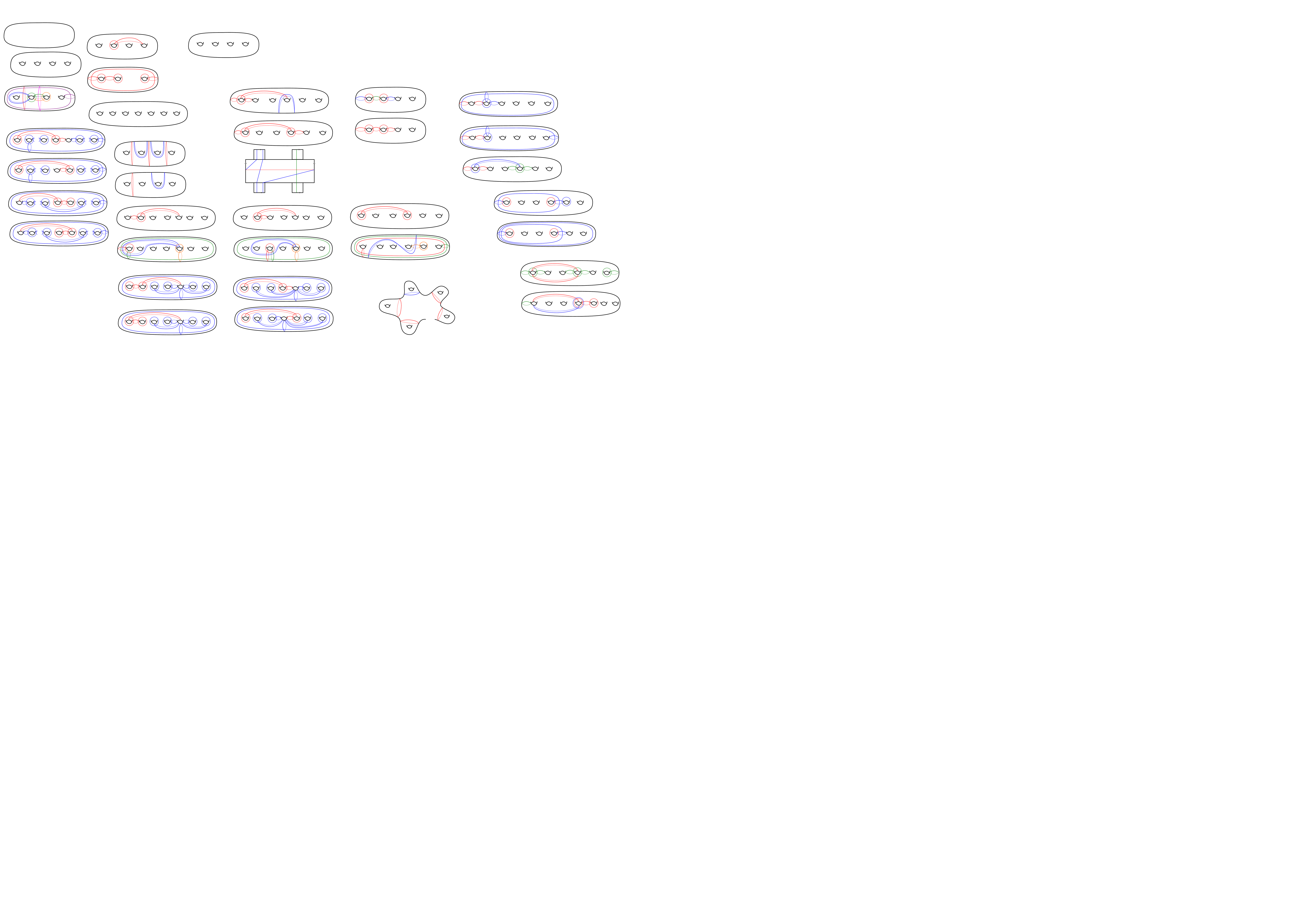}\hspace{.5in}\includegraphics[scale=.8]{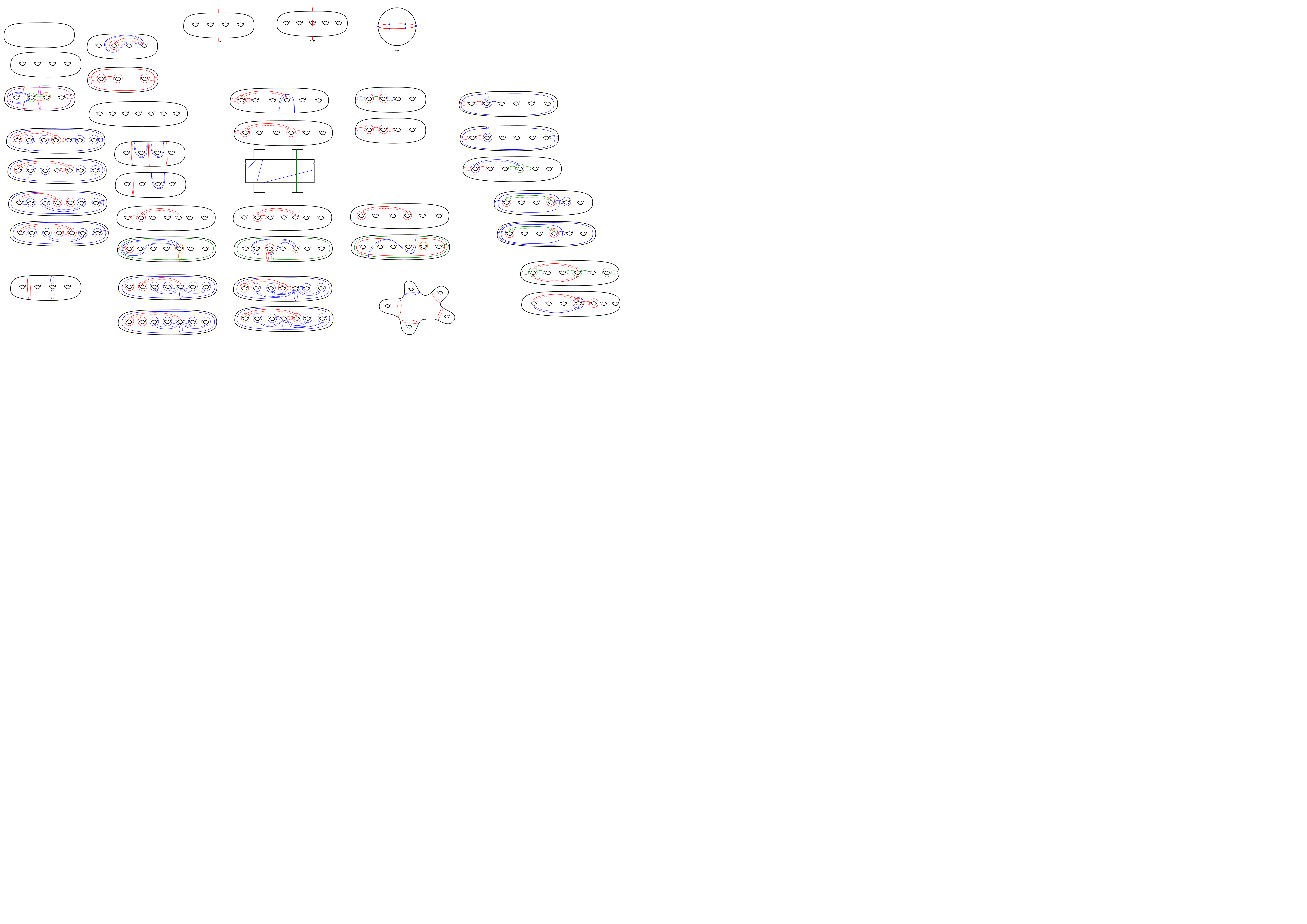}
\caption{Left: chain curves in $C$. Right: sample curves in $S$ and $B$.}
\label{fig:chain}
\end{figure}

It is convenient to identify $\{0,\ldots,2g+1\}\cong\Z/(2g+2)\Z$, and define an \emph{interval} $J\subset\{0,\ldots,2g+1\}$ to be a sequence of consecutive equivalence classes. Sometimes we write $[i,j]$ for the interval $\{i, i+1,\ldots, j\}$. For an interval $J$, define $c_J=\bigcup_{j\in J}c_j$ and 
\[s_{J}=\partial N(c_J).\]
When $|J|\ge2$ is even, $s_J$ is a separating curve of genus $|J|/2$. When $|J|>1$ is odd, $s_J$ is a bounding pair. Note that the union of the even chain curves $c_0\cup c_2\cup\dots\cup c_{2g}$ separates $\Sigma$ into $\Sigma_e^+$ and $\Sigma_e^-$. Similarly, the union of the odd chain curves $c_1\cup c_3\cup\dots \cup c_{2g+1}$ cuts $\Sigma$ into $\Sigma_o^+$ and $\Sigma_o^-$. Let $b_J^+$ be the curve in $s_J$ that lies in $\Sigma_o^+$ or $\Sigma_e^+$, and let $b_J^-$ be the curve in $s_J$ that lies in $\Sigma_o^-$ or $\Sigma_e^-$. Then $s_J=b_J^+\cup b_J^-$. Write 
$S=\{s_J: |J| \text{ even}\}$ and $B=\{b_J^{\pm}: |J|\text{ odd}\}$ for the collection of these curves. See Figure \ref{fig:chain} for an example.

\paragraph{Genus-1 curves.} Next we consider the following collection $U$ of genus-1 curves. Fix $J=[i,j]$ with $|J|$ is odd. Noting that $i(b_J^\pm,c_{i-1})=1=i(b_J^\pm,c_{j+1})$, we define 
\[u_{i-1,J}^\pm=\partial N(c_{i-1}\cup b_J^\pm)\>\>\>\text{ and }\>\>\>u_{j+1,J}^\pm=\partial N(c_{j+1}\cup b_J^\pm),\]
and we let $U$ denote the collection of all such curves. Figure \ref{fig:U-curve} contains an example.

\begin{figure}[h!]
\labellist
\pinlabel $u_{3,[4,6]}^+$ at 320 2675
\pinlabel $v_{[3,4]*[4,8]*\{9\}}^+$ at 690 2670
\endlabellist
\centering
\includegraphics[scale=.7]{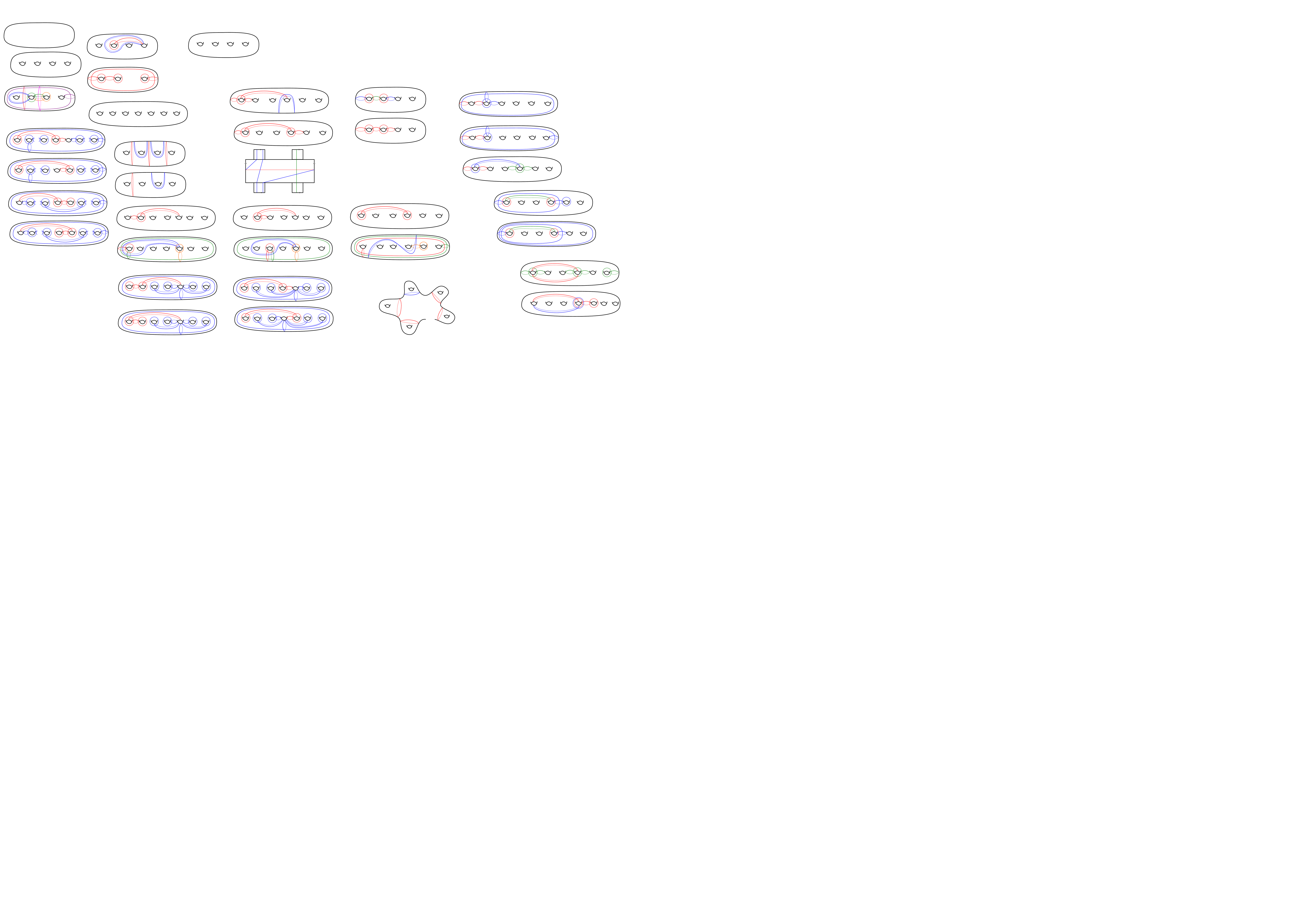}\hspace{.4in}\includegraphics[scale=.7]{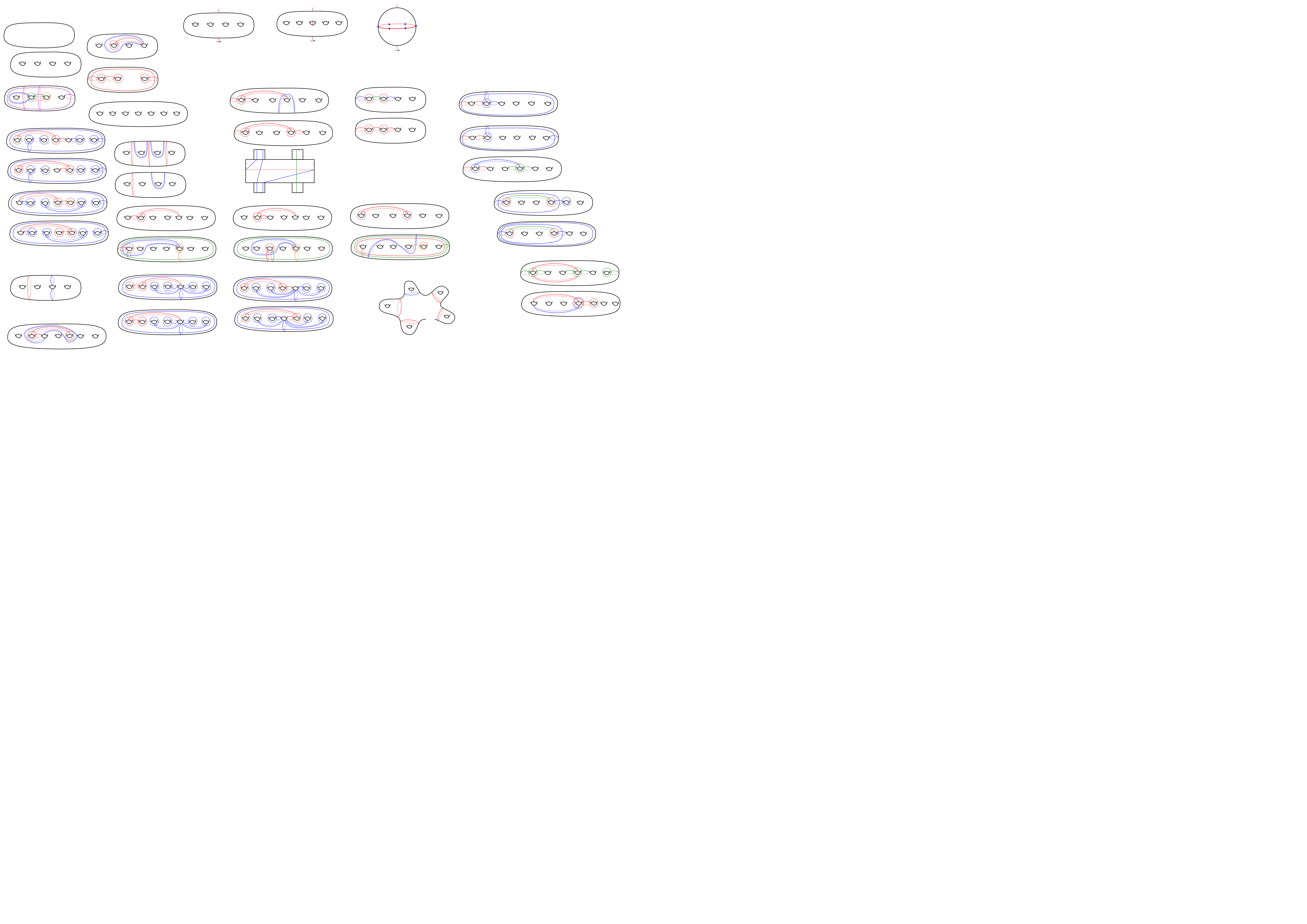}
\caption{Sample curves in $U$ (left) and $V$ (right).}
\label{fig:U-curve}
\end{figure}



\paragraph{Genus-2 curves.} Finally, we consider the following collection $V$ of genus-2 curves. These curves are obtained by taking $v=\partial N(x_1\cup\cdots\cup x_4)$, where $x_1,\ldots,x_4$ is a chain of length 4 with three of the $x_j$ in $C$ and one in $B$. There are various cases depending on the position of the curve in $B$ in the chain. 

To make this precise, write $J=[i,j]$, where $|J|$ is odd. A \emph{predecessor interval} (resp.\ \emph{successor interval}) for $J$ is an interval of the form $[*,i-1]$ or $[i-1,*]$ (resp.\ $[j+1,*]$ or $[*,j+1]$). If $\pi$ and $\sigma$ are predecessor and successor intervals, respectively, and $|\pi|+|\sigma|=3$, then we we define 
\begin{equation}\label{eqn:genus2}v_{\pi*J*\sigma}^\pm=\partial N(c_\pi\cup b_J^\pm\cup c_\sigma),\end{equation}
and we define $V$ as the collection of all such curves. Note that we allow one of $\pi$ or $\sigma$ to be empty. The assumption $|\pi|+|\sigma|=3$ implies that $v_{\pi*J*\sigma}^\pm$ is a genus-2 curve. See Figure \ref{fig:U-curve} for an example. 

%
%
%
%
%
%
%

Now we define 
\begin{equation}\label{eqn:rigid-set}
Y=C\cup S\cup B\cup U\cup V
\end{equation}
and 
\begin{equation}\label{eqn:rigid-set-sep}
Y^s=S\cup U\cup V
\end{equation}

\begin{rmk}\label{rmk:injective}
It is easy to check that $Y^s$ satisfies the assumption of Lemma \ref{lem:injective-criterion}, so we observe that any incidence-preserving map $\phi:Y^s\rightarrow\mathcal C^s(\Sigma)$ is injective. 
\end{rmk}

The following is a more precise statement of Theorem \ref{thm:main}. 

\begin{thm}\label{thm:rigid-set}
Fix $g\ge4$. Let $Y^s\subset\mathcal C^s(\Sigma_g)$ be the subcomplex defined by (\ref{eqn:rigid-set-sep}). Any incidence-preserving map $\phi:Y^s\rightarrow\mathcal C^s(\Sigma)$ is induced by a mapping class. 
\end{thm}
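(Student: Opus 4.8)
The plan is to deduce the theorem from three intermediate results, the effect of which is to reduce a statement about incidence-preserving maps on $\mathcal C^s(\Sigma)$ to a statement about locally injective maps on the full complex $\mathcal C(\Sigma)$, where the Aramayona--Leininger finite rigid set $X$ is available. First I would establish rigidity of $Y$ itself (Proposition \ref{prop:Y-rigid}): any locally injective $\psi\colon Y\to\mathcal C(\Sigma)$ is induced by some $h\in\Mod^\pm(\Sigma)$. The key point is that $Y$ contains a copy of $X$, so $\psi|_X$ is induced by a mapping class $h$ via \cite{AL}; it then remains to check that $h$ agrees with $\psi$ on the remaining curves $Y\setminus X$. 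For this I would verify that each such curve is \emph{uniquely determined}, in the sense of \cite[Defn.\ 2.1]{AL}, by its pattern of disjointness and intersection with a finite subset of $X$. Since a locally injective map preserves this pattern, it is forced to send the curve where $h$ does. This is a finite, explicit check carried out curve-by-curve from the pictures defining $C,S,B,U,V$.

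Second --- and this is the technical core --- I would show that any incidence-preserving $\phi\colon Y^s\to\mathcal C^s(\Sigma)$ extends to a locally injective $\widetilde\phi\colon Y\to\mathcal C(\Sigma)$ (Theorem \ref{thm:extend}). The only new values to define are on the nonseparating curves $Y\setminus Y^s=C\cup B$. Each such curve $y$ is the curve shared by a sharing pair $(\alpha,\beta)$ of genus-$1$ separating curves drawn from $U\subset Y^s$, so I would set $\widetilde\phi(y)$ to be the curve shared by $\phi(\alpha),\phi(\beta)$. Making this rigorous requires that (a) $(\phi(\alpha),\phi(\beta))$ is again a sharing pair, (b) the shared curve is independent of the chosen sharing pair for $y$, and (c) the resulting $\widetilde\phi$ is simplicial and locally injective. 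For (a) I would invoke Corollary \ref{cor:sharing-pair}, which reduces the claim to showing that $\phi$ preserves the genus of the relevant curves and the containment relations among them; here one uses the Brendle--Margalit characterization (Lemma \ref{lem:sharing-pair}), the injectivity of $\phi$ (Remark \ref{rmk:injective}), and the fact that $\phi$ preserves disjointness and intersection. Statements (b) and (c) I would then obtain by tracking how chains of nonseparating curves are transported by the extension, since a shared curve is pinned down once enough disjoint/intersecting genus-$1$ and genus-$2$ data is recorded.

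The assembly is then immediate: given $\phi$, the extension $\widetilde\phi$ of the second step is locally injective, so by the first step there exists $h\in\Mod^\pm(\Sigma)$ with $\widetilde\phi=\rest{h}{Y}$; restricting to $Y^s$ gives $\phi=\rest{h}{Y^s}$, so $\phi$ is induced by $h$. (Uniqueness of $h$, though not asserted in this statement, would follow from the uniqueness clause of \cite[Thm.\ 5.1]{AL}.)

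I expect the main obstacle to lie entirely in the second step, and within it in the intertwined claims that $\phi$ preserves genus (Theorem \ref{thm:genus}) and preserves sharing pairs (Theorem \ref{thm:sharing-pair}). The reduction through Corollary \ref{cor:sharing-pair} is clean, but verifying its hypotheses appears to be genuinely circular: preservation of genus is needed to certify sharing pairs, while knowing that sharing pairs are preserved is what lets one pin down genus. I would therefore prove the two statements simultaneously by a bootstrapping induction, proving individual cases of one in order to feed the other. The added difficulty, relative to the Brendle--Margalit setting, is that on the finite and comparatively rigid complex $Y^s$ one loses the usual freedom to manufacture auxiliary separating curves with prescribed intersection behavior, so the certifying curves $w,x,y,z$ of Lemma \ref{lem:sharing-pair} must be located inside $Y^s$ itself, which is where the explicit structure of the construction does the work.
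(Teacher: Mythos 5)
Your proposal follows the paper's own argument essentially verbatim: rigidity of $Y$ via uniquely determined curves and the Aramayona--Leininger set (Proposition \ref{prop:Y-rigid}), a bootstrapped proof that $\phi$ preserves genus and sharing pairs (Theorems \ref{thm:genus} and \ref{thm:sharing-pair}), and an extension $\widetilde\phi$ defined by shared curves whose well-definedness and local injectivity are controlled by chains (Theorem \ref{thm:extend}, Lemma \ref{lem:chains}). The assembly and the identification of where the real difficulty lies both match the paper, so this is the same proof in outline.
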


\paragraph{Symmetries of $Y$.} To prove Theorem \ref{thm:rigid-set}, it will be helpful to observe that $Y$ has several symmetries. This will significantly reduce the casework in our arguments. 

\begin{lem}\label{lem:permute}
There are mapping classes $r,s\in\Mod(\Sigma)$ that preserve $C=\{c_0,\ldots,c_{2g-1}\}$ and act on $C$ by 
\[r(c_i)=c_{i+1}\>\>\>\>\text{ and }s(c_i)=c_{-i-2}\]
with indices interpreted modulo $2g+2$.
\end{lem}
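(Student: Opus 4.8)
The statement to prove is Lemma \ref{lem:permute}: the existence of mapping classes $r, s \in \Mod(\Sigma)$ permuting the chain $C = \{c_0, \ldots, c_{2g+1}\}$ by the prescribed index formulas. Let me sketch how I would establish this.

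Let me think about the structure here. The chain $C$ is a chain of $2g+2$ curves $c_0, \ldots, c_{2g+1}$ with indices in $\Z/(2g+2)\Z$, and I want mapping classes realizing the cyclic shift $r: c_i \mapsto c_{i+1}$ and the reflection $s: c_i \mapsto c_{-i-2}$.
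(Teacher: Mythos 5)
Your proposal stops before the proof begins: after restating what $r$ and $s$ are supposed to do, no construction of either mapping class is given. The entire content of the lemma is the \emph{existence} of homeomorphisms realizing these permutations of the chain, and that is exactly the part that is missing. Note that existence is not automatic --- an arbitrary permutation of the $c_i$ need not be realized by a mapping class, since any homeomorphism must preserve the intersection pattern $i(c_j,c_k)$; the formulas $i\mapsto i+1$ and $i\mapsto -i-2$ do preserve the cyclic adjacency structure of the closed chain, but one still has to exhibit actual homeomorphisms of $\Sigma$ inducing them.

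For comparison, the paper's proof is short but does supply the constructions. The reflection $s$ is exhibited directly as a visible symmetry of a standard embedding of $\Sigma$ (the picture depends on the parity of $g$); it acts on $C$ as the reflection fixing $c_g$ and $c_{2g+1}$. The rotation $r$ is obtained by observing that $C$ is invariant under a hyperelliptic involution $\iota$, so that the chain descends to a cycle of $2g+2$ arcs joining the $2g+2$ branch points on $\Sigma/\langle\iota\rangle\cong S^2$; an order-$(2g+2)$ rotation of the sphere cyclically permuting the branch points (and the arcs) lifts to a homeomorphism of $\Sigma$ sending $c_i$ to $c_{i+1}$. To complete your proof you would need to supply arguments of this kind --- either explicit symmetric models of $(\Sigma, C)$ or the quotient-and-lift argument via the hyperelliptic involution.
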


Here $s$ acts on $C$ as the ``reflection" fixing $c_g$ and $c_{2g+1}$. 

\begin{proof}[Proof of Lemma \ref{lem:permute}]
The homeomorphism $s$ is pictured in Figure \ref{fig:permute}; the description depends on whether the genus of $\Sigma$ is odd or even. 

\begin{figure}[h!]
\labellist
\pinlabel $s$ at 670 2700
\pinlabel $\cdots$ at 625 2760
\pinlabel $\cdots$ at 715 2760
\pinlabel $\cdots$ at 848 2760
\pinlabel $\cdots$ at 972 2760
\pinlabel $s$ at 910 2700
\pinlabel $\bar r$ at 1095 2700
\endlabellist
\centering
\includegraphics[scale=.7]{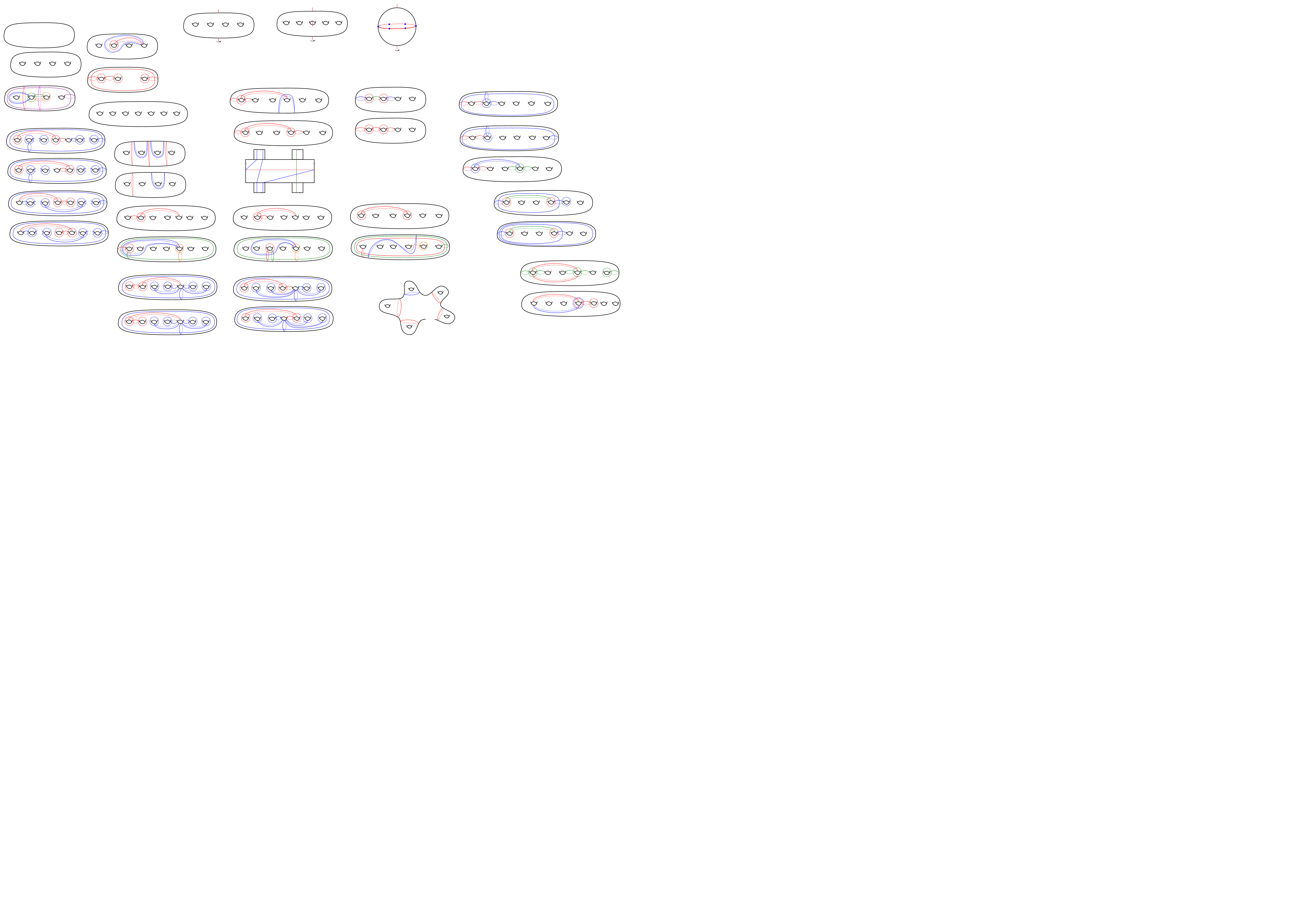}
\includegraphics[scale=.7]{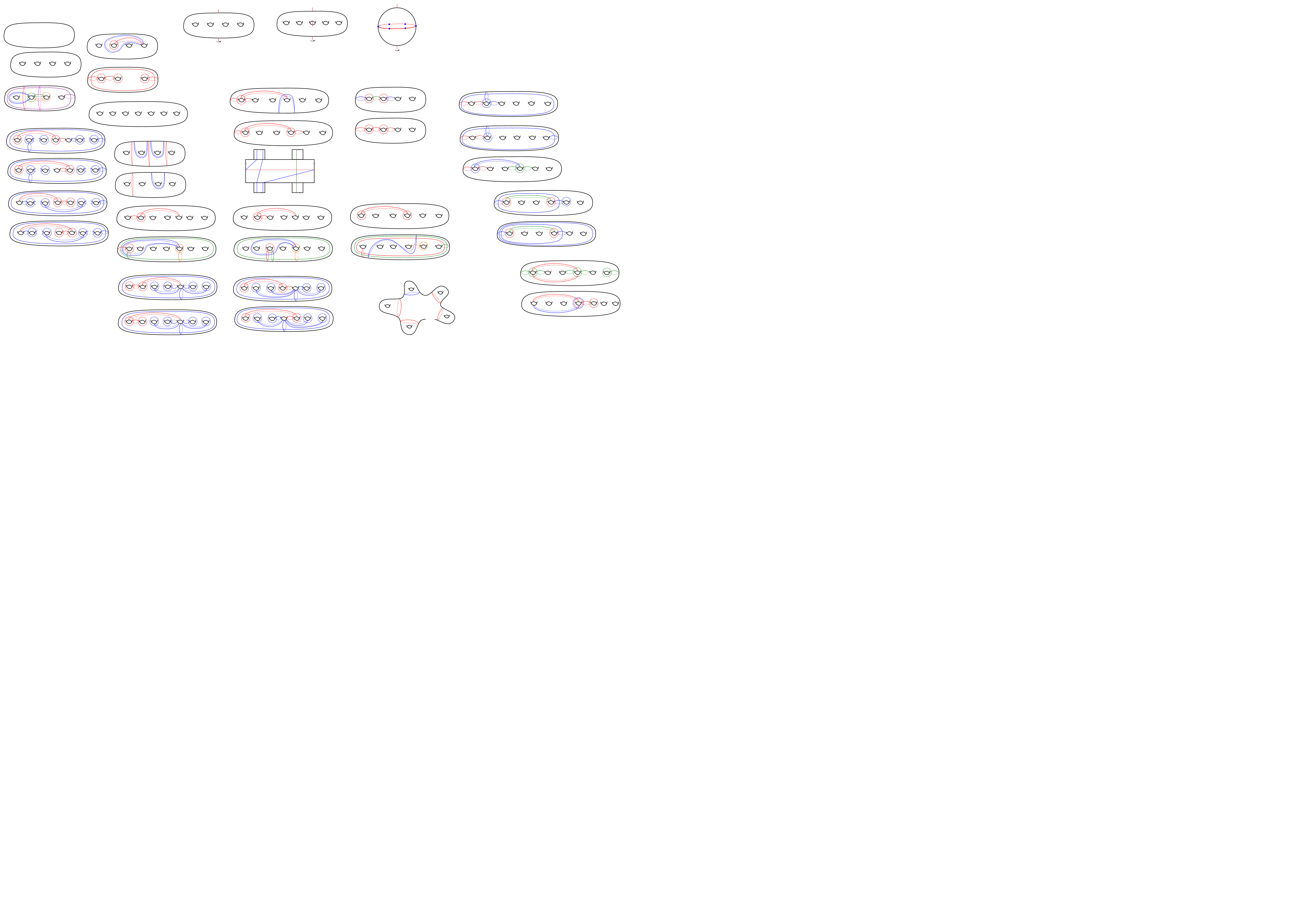}
\includegraphics[scale=.7]{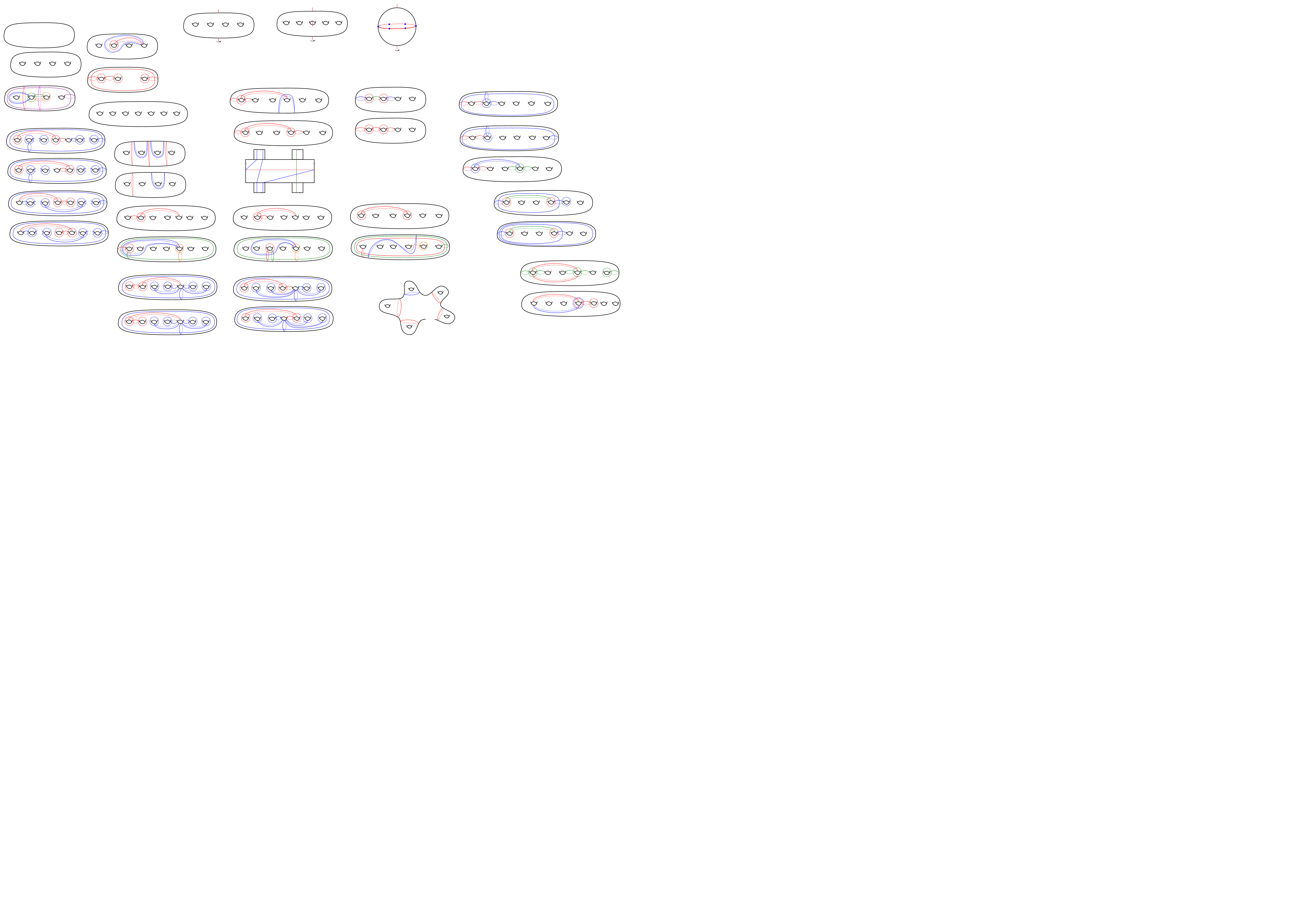}
\caption{Symmetries of $\Sigma$ and $\Sigma/\pair{\iota}$}
\label{fig:permute}
\end{figure}

To define $r$, observe that $C$ is invariant by a hyperelliptic involution $\iota$. The image of $C$ on $\Sigma/\pair{\iota}\cong S^2$ is a sequence of arcs between the branch points, and an order-($2g+2$) rotation $\bar r$ of $S^2$ that permutes the branched points, as pictured in Figure \ref{fig:permute}, lifts to a homeomorphism $r$ of $\Sigma$ with the desired properties. 
\end{proof}

\begin{rmk}\label{rmk:permute} 
By the construction of $Y$, a mapping class that preserves $C$ will preserve $Y$. Therefore, we can also view $r,s$ as automorphisms of $Y$. They generate a dihedral group. In addition, there is an obvious hyperelliptic involution that preserves $Y$. It acts trivially on $C$, but interchanges $b^+\leftrightarrow b^-$ and $u^+\leftrightarrow u^-$ for $b^\pm\in B$ and $u^\pm\in U$. 
\end{rmk}

\section{$Y$ is rigid in $\mathcal C(\Sigma)$}\label{sec:Y-rigid}

In this section we start the proof of Theorem \ref{thm:main}, as outlined in the introduction for the set $Y^s$ defined in \S\ref{sec:rigid-set}. In what follows $\Sigma$ denotes a fixed closed surface of genus $g\ge4$. 

\begin{prop}\label{prop:Y-rigid}
Let $Y\subset\mathcal C(\Sigma)$ be the subcomplex defined in (\ref{eqn:rigid-set}). Any locally injective map $\Phi:Y\rightarrow\mathcal C(\Sigma)$ is induced by an extended mapping class. 
\end{prop}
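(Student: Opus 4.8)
The plan is to reduce rigidity of $Y$ to the established rigidity of the Aramayona--Leininger set $X\subset\mathcal C(\Sigma)$ by exploiting the fact that $Y$ is built from $X$ by adjoining curves that are each \emph{uniquely determined} (in the sense of \cite[Defn.\ 2.1]{AL}) by curves already present. First I would identify a copy of $X$ inside $Y$; the natural candidate is the chain $C$ together with the separating and bounding-pair curves $S\cup B$ coming from $C$, since the Aramayona--Leininger complex is itself assembled from a maximal chain and the associated $\partial N(\cdot)$ curves. Having fixed $X\subset Y$, I would invoke \cite[Thm.\ 5.1]{AL}: any locally injective map $X\to\mathcal C(\Sigma)$ is the restriction of a (unique) extended mapping class. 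So given a locally injective $\Phi:Y\to\mathcal C(\Sigma)$, its restriction $\rest{\Phi}{X}$ is realized by some $h\in\Mod^\pm(\Sigma)$.

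The heart of the argument is then to show that $\Phi$ agrees with $h$ on all of $Y$, not just on $X$. For this I would use the \emph{uniquely determined} machinery of \cite{AL}: if a curve $y$ is uniquely determined by a subset $F\subset X$ — meaning $y$ is the only curve realizing a prescribed pattern of intersection/disjointness with the curves of $F$ — then any locally injective map sends $y$ to the curve uniquely determined by $\Phi(F)=h(F)$, which is $h(y)$. Thus the crux is to verify, for each $y\in Y\setminus X$, i.e.\ for each genus-1 curve in $U$ and each genus-2 curve in $V$, that $y$ is uniquely determined by an explicit finite subset of $X$. Concretely, since $u^\pm_{i,J}=\partial N(c_{i}\cup b_J^\pm)$ and $v^\pm_{\pi*J*\sigma}=\partial N(c_\pi\cup b_J^\pm\cup c_\sigma)$ are defined via chains whose pieces lie in $C\cup B\subset X$, I would exhibit a small collection of curves in $X$ whose intersection pattern pins down $u$ (resp.\ $v$) as the unique curve with that pattern; the defining chain together with a few neighboring chain curves $c_{i\pm1}$ should suffice to cut out the relevant subsurface and force uniqueness. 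Here the symmetries $r,s$ from Lemma \ref{lem:permute} and the hyperelliptic involution from Remark \ref{rmk:permute} are valuable: they act transitively (up to a few orbits) on the curves of $U$ and $V$, so it is enough to check the unique-determination condition for one representative of each orbit rather than for every curve individually.

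The main obstacle I anticipate is the verification that each $y\in U\cup V$ really is uniquely determined by curves in $X$, rather than merely determined up to finitely many ambiguities. Unique determination is a delicate global condition — one must rule out any other curve on $\Sigma$ (not just any other curve in $Y$) realizing the same intersection pattern with the chosen subset of $X$ — and this is exactly the sort of statement that can fail when the ``test set'' is too small, as the introduction's remark about the loss of flexibility on finite subcomplexes warns. I would handle this by arguing at the level of the complementary subsurface: the chosen curves in $X$ should fill a subsurface in which $y$ is the unique essential curve of the correct topological type with the specified boundary behavior, so that any locally injective image must land on the corresponding curve for $h$. Once unique determination is established for representatives of each symmetry orbit, it follows that $\rest{\Phi}{Y}=\rest{h}{Y}$, and since $h$ is an extended mapping class this proves $\Phi$ is induced by $h$; uniqueness of $h$ is inherited directly from the uniqueness in \cite[Thm.\ 5.1]{AL}.
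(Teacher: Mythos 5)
Your proposal matches the paper's proof: the paper likewise observes $Y\supset X$ where $X=C\cup S\cup B\cup U$ is the Aramayona--Leininger rigid set, invokes \cite[Thm.\ 5.1]{AL} to realize $\rest{\Phi}{X}$ by some $h\in\Mod^\pm(\Sigma)$, and then shows each $v\in V$ is uniquely determined by a subset of $X$ by exhibiting (after reducing via the symmetries of Lemma \ref{lem:permute}) two explicit chains in $X$ of lengths $4$ and $2(g-2)$ that fill the two components of $\Sigma\setminus v$, which is exactly your ``fill the complementary subsurface'' resolution of the uniqueness worry. The only slip is that the genus-$1$ curves $U$ already belong to the Aramayona--Leininger set $X$ (see (\ref{eqn:AL-rigid-set})), so $Y\setminus X=V$ and the unique-determination check is needed only for the genus-$2$ curves, not for $U$.
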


Using the notation of \S\ref{sec:rigid-set}, we set
\begin{equation}\label{eqn:AL-rigid-set}
X=C\cup S\cup B\cup U.
\end{equation}
This is the finite rigid set of \cite[Thm.\ 5.1]{AL}. 

\begin{thm}[Aramayona--Leininger]\label{thm:AL}
Let $X\subset\mathcal C(\Sigma)$ be the subcomplex defined in (\ref{eqn:AL-rigid-set}). Any locally injective map $\Phi:X\rightarrow\mathcal C(\Sigma)$ is induced by an extended mapping class. 
\end{thm}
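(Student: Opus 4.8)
The plan is to prove Theorem \ref{thm:AL} by the \emph{uniquely-determined vertex} method. Say a vertex $x$ is \emph{uniquely determined} by a set $A$ of vertices if $x$ is the unique essential curve disjoint from every member of $A$. The engine of the argument is the elementary observation that this notion is preserved by locally injective simplicial maps once $A$ is pinned down: if $\Phi$ is locally injective and $\Phi|_A$ agrees with a homeomorphism $h$, then $\Phi(x)$ is disjoint from every curve of $h(A)$ (because simplicial maps preserve disjointness), and since $h(x)$ is the unique such curve, we get $\Phi(x)=h(x)$. Crucially this uses only that disjointness is preserved, which local injectivity supplies for free. The theorem therefore reduces to two tasks: first realize $\Phi$ by a homeomorphism on a rigid \emph{core}, and then verify that every remaining vertex of $X$ is uniquely determined by curves already treated.

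For the core I would take the chain $C=\{c_0,\dots,c_{2g+1}\}$, which fills $\Sigma$ and carries the cyclic symmetry of Lemma \ref{lem:permute}. The goal is to show that $\Phi$ is injective on $C$ and that $\Phi(C)$ is again a chain of the same combinatorial type. Injectivity is immediate: any two chain curves are simultaneously disjoint from a third curve of $X$, hence lie in a common star on which $\Phi$ is injective. The disjointness relations $i(c_i,c_j)=0$ for cyclically non-adjacent indices are preserved because $\Phi$ is simplicial. The delicate relations are the consecutive crossings $i(c_i,c_{i+1})=1$, which local injectivity does not directly control; to recover them one uses the auxiliary curves $S$ and $B$, for instance the genus-$1$ curve $s_{\{i,i+1\}}=\partial N(c_i\cup c_{i+1})$ together with its neighbors, to rule out the degeneration $i(\Phi(c_i),\Phi(c_{i+1}))=0$. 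Once $\Phi(C)$ is known to be a chain of the same type, the change-of-coordinates principle produces a homeomorphism carrying $\Phi(C)$ back to $C$; post-composing $\Phi$ with it and with a suitable element of the dihedral group $\langle r,s\rangle$ of Lemma \ref{lem:permute}, I may assume $\Phi|_C=\mathrm{id}$.

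With $\Phi$ fixing $C$ pointwise, I would recover the remaining curves family by family using the engine. Each separating curve $s_J$, each bounding-pair component $b_J^\pm$, and each genus-$1$ curve in $U$ is the unique essential curve disjoint from an explicit sub-collection of $C$ (and of the curves already determined): for example $s_J=\partial N(c_J)$ is the unique curve disjoint from the sub-chain $\{c_j:j\in J\}$ together with enough far-side chain curves to fill the complement, since a filling chain leaves only its boundary as a disjoint essential curve. Applying the engine to each such curve shows $\Phi$ fixes it, so $\Phi$ agrees with the identity on all of $X$; undoing the normalization exhibits the original $\Phi$ as the restriction of an extended mapping class.

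I expect the main obstacle to be the core step, and within it two points. The first is upgrading disjointness-preservation to the recovery of the consecutive crossings of $C$: this is precisely where local injectivity is insufficient and the combinatorics of $S\cup B$ must be brought in, and it is the technical heart of the base case. The second is symmetry-breaking for the two-sided curves $b_J^\pm$ and the genus-$1$ curves in $U$, which are interchanged by the hyperelliptic involution of Remark \ref{rmk:permute}; one must check that the determining sub-collections distinguish the two sides, so that their images are pinned to the correct one rather than only up to the involution.
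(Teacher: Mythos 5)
First, a point of order: the paper does not prove Theorem \ref{thm:AL} at all --- it is imported as a black box from Aramayona--Leininger \cite[Thm.\ 5.1]{AL}, and the only argument in this paper resembling yours is the proof of Proposition \ref{prop:Y-rigid}, which uses exactly your ``uniquely determined'' engine, but only to extend rigidity from $X$ to the superset $Y$. Judged as a proof of the cited theorem itself, your proposal has a genuine gap at its base case. Everything after ``once $\Phi(C)$ is known to be a chain of the same type'' is a reasonable outline, but establishing that $\Phi(C)$ is a standard closed chain is essentially the entire content of the theorem, and you give no argument for it. Local injectivity only yields distinctness within stars and preservation of disjointness; you must both rule out $i(\Phi(c_i),\Phi(c_{i+1}))=0$ and force $i(\Phi(c_i),\Phi(c_{i+1}))=1$ \emph{exactly} (the change-of-coordinates step requires the image configuration to be homeomorphic to the model, so intersection number $\geq 2$ must also be excluded --- two distinct essential curves in a one-holed torus can intersect arbitrarily often). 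Observing that $c_i$ and $c_{i+1}$ both lie in the star of $s_{\{i,i+1\}}$ gives only $\Phi(c_i)\neq\Phi(c_{i+1})$, not that they cross, let alone that they cross once. The actual proof in \cite{AL} is organized quite differently: it analyzes images of maximal simplices (pants decompositions) and their adjacencies, and handles the closed case by descending through the hyperelliptic quotient to a rigid configuration on the $(2g+2)$-marked sphere.

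The second gap is the one you flag but do not close, and it is fatal to the plan as literally stated. Every curve in $C\cup S$ is invariant under the hyperelliptic involution $\iota$ of Remark \ref{rmk:permute}, which swaps $b_J^+\leftrightarrow b_J^-$ and $u^+\leftrightarrow u^-$; hence $i(\gamma,b_J^+)=i(\gamma,b_J^-)$ for every $\gamma\in C\cup S$, so no subcollection of $C\cup S$ can uniquely determine $b_J^+$. After normalizing $\Phi|_C=\mathrm{id}$, the engine can only conclude $\Phi(b_J^\pm)\in\{b_J^+,b_J^-\}$, and the asserted conclusion that ``$\Phi$ agrees with the identity on all of $X$'' is false: $\Phi$ could agree with $\iota$. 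This is repairable --- compose with $\iota$ to pin down one sign, then propagate a consistent choice of signs through the $U$-curves, whose whole purpose in $X$ is this symmetry breaking --- but that is an additional global consistency argument, not a curve-by-curve application of unique determination, and it must be written out.
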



\begin{proof}[Proof of Proposition \ref{prop:Y-rigid}]
Recall from \cite[Defn.\ 2.1]{AL} that a vertex $v$ of $\mathcal C(\Sigma)$ is \emph{uniquely determined} by a $A\subset\mathcal C(S)$ if $v$ is the unique isotopy class of curve that can be realized disjointly from every curve in $A$. 

{\it Claim.} Each $v\in V$ is uniquely determined by a subset $A\subset X$. 

First we explain why the claim implies the proposition. Let $\Phi:Y\rightarrow\mathcal C(\Sigma)$ be a locally injective map. Since $Y\supset X$ and $X$ is rigid by \cite[Thm.\ 5.1]{AL} (see Theorem \ref{thm:AL} above), there exists $h\in\Mod^\pm(\Sigma)$ such that $\rest{h}{X}=\rest{\Phi}{X}$. To show $\rest{h}{Y}=\Phi$, it remains to show that $h(v)=\Phi(v)$ for $v\in V$. By the claim, for $v\in V$, there exists $A\subset X$ such that $v$ is uniquely determined by $A$. Since $\Phi$ is simplicial, $\Phi(v)$ is disjoint from $\Phi(A)$. Since $h\in\Mod^\pm(\Sigma)$, there is a unique vertex $v'\in\mathcal C(\Sigma)$ that is disjoint from $h(A)$, and $h(v)=v'$. Thus $\Phi(v)=v'=h(v)$, as desired. 

{\it Proof of Claim.} Fix $v=v_{\pi*J*\sigma}^\epsilon$ in $V$, where $\epsilon\in\{\pm\}$. Using the automorphisms defined in Lemma \ref{lem:permute}, we can assume that $|\pi|<|\sigma|$ and $J=[2,\ldots,2k]$. For concreteness we take $\epsilon=+$; the case $\epsilon=-$ is similar. 

We want to find a collection of curves in $X$ that fill $\Sigma_0:=S\setminus v$. In fact, we will show that there are two chains in $\Sigma_0$ of length $4$ and $2(g-2)$ that fill the two components of $\Sigma_0$ (which have genus $2$ and $g-2$). For the chain of length-4, we use the chain used to define $v$. A choice for the other chain is pictured in Figure \ref{fig:Y-rigid}.
\begin{figure}[h!]
\labellist
\pinlabel $...$ at 112 2405
\pinlabel $...$ at 267 2405
\pinlabel $...$ at 490 2405
\pinlabel $...$ at 645 2406
\pinlabel $...$ at 109 2317
\pinlabel $...$ at 265 2317
\pinlabel $...$ at 486 2318
\pinlabel $...$ at 642 2318

\endlabellist
\centering
\includegraphics[scale=.6]{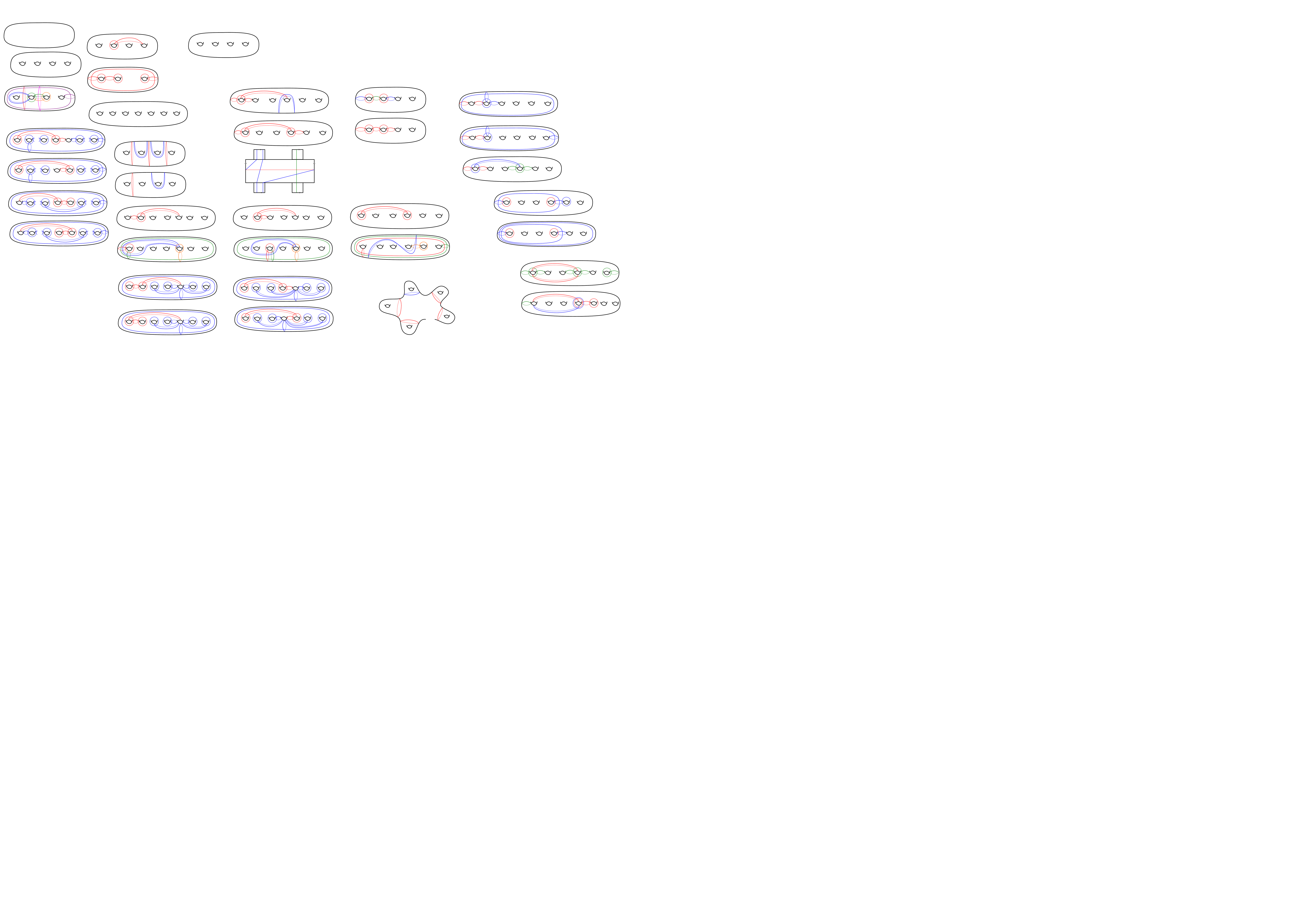}\hspace{.5in}
\includegraphics[scale=.6]{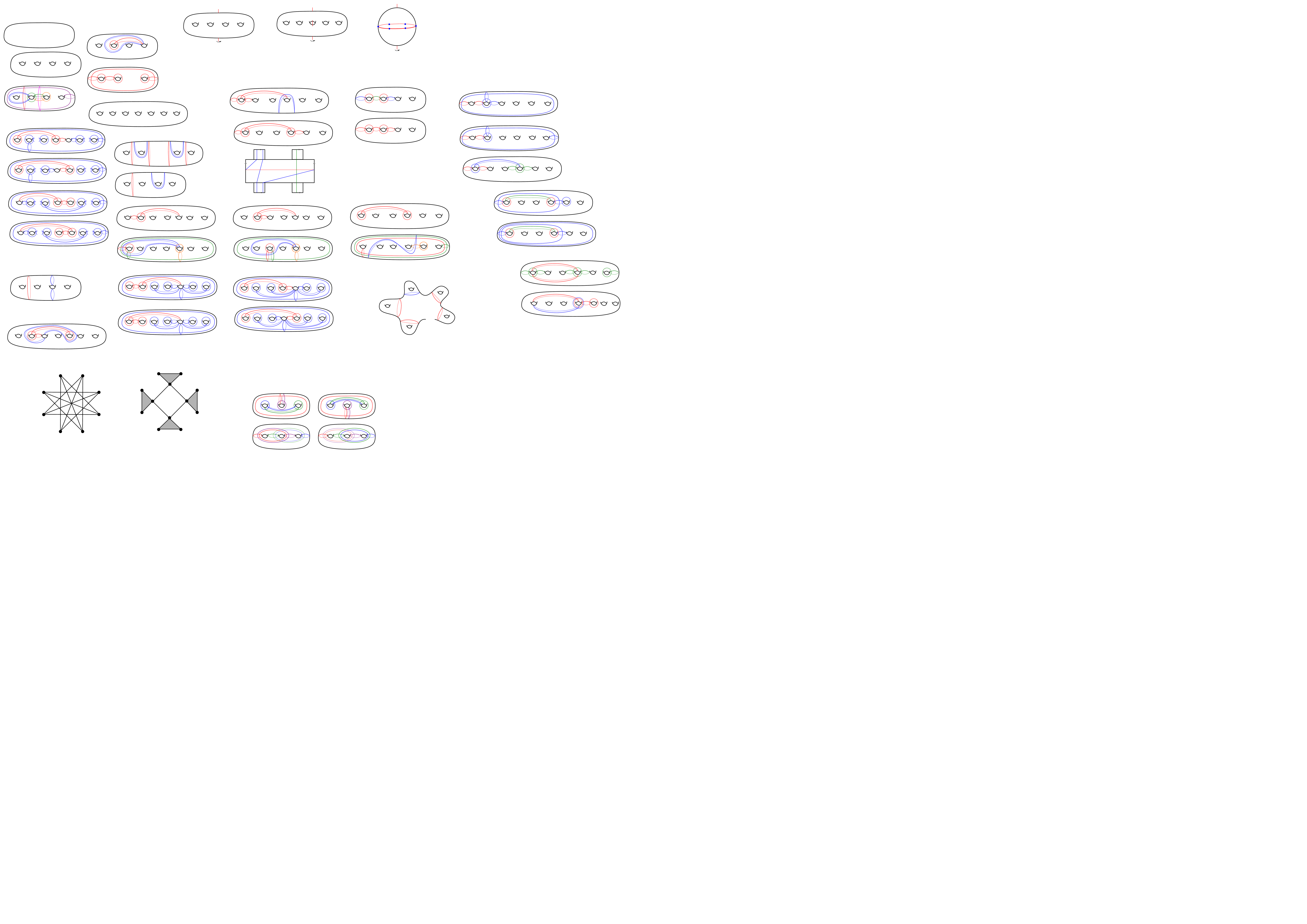}
\includegraphics[scale=.6]{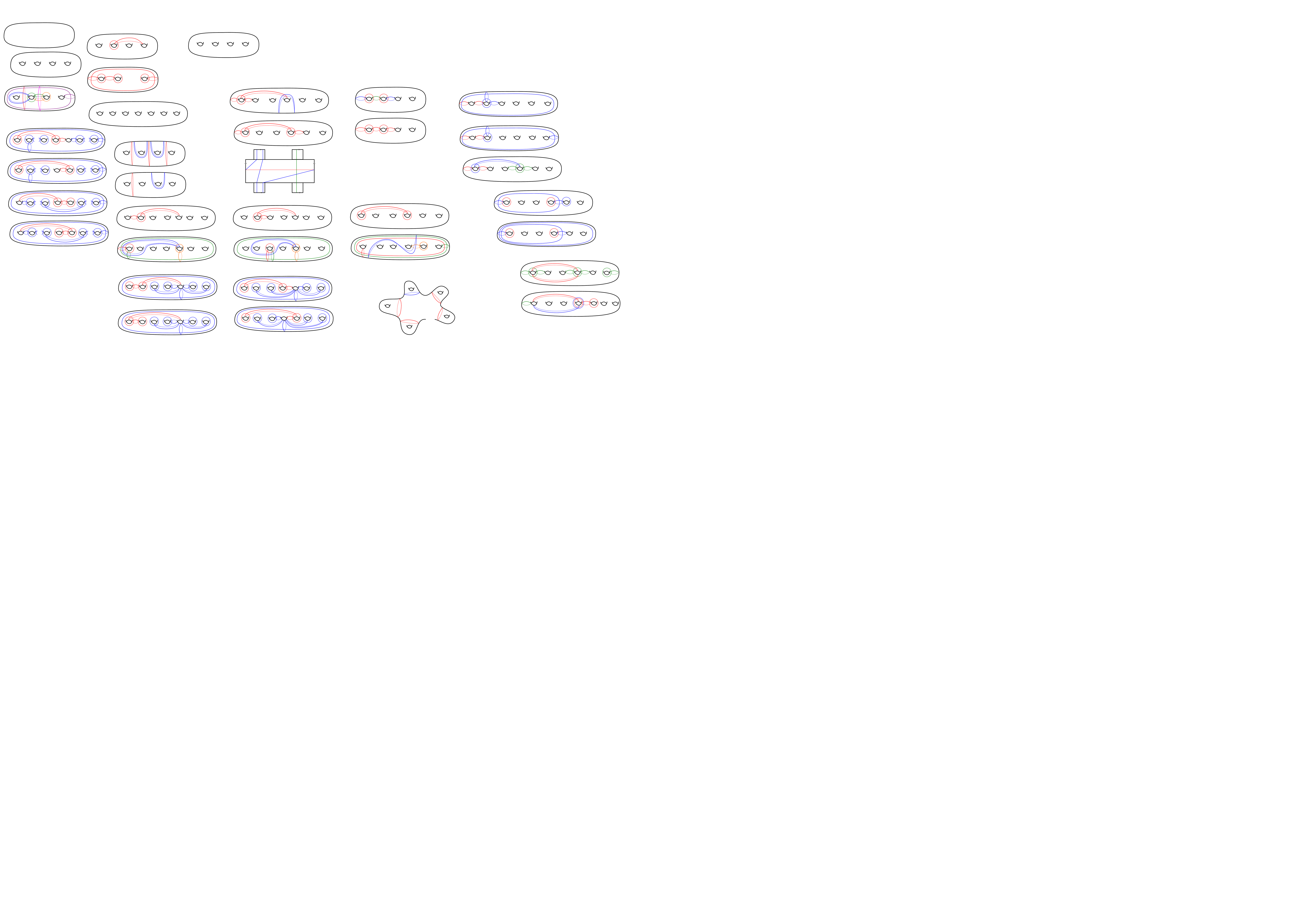}\hspace{.5in}
\includegraphics[scale=.6]{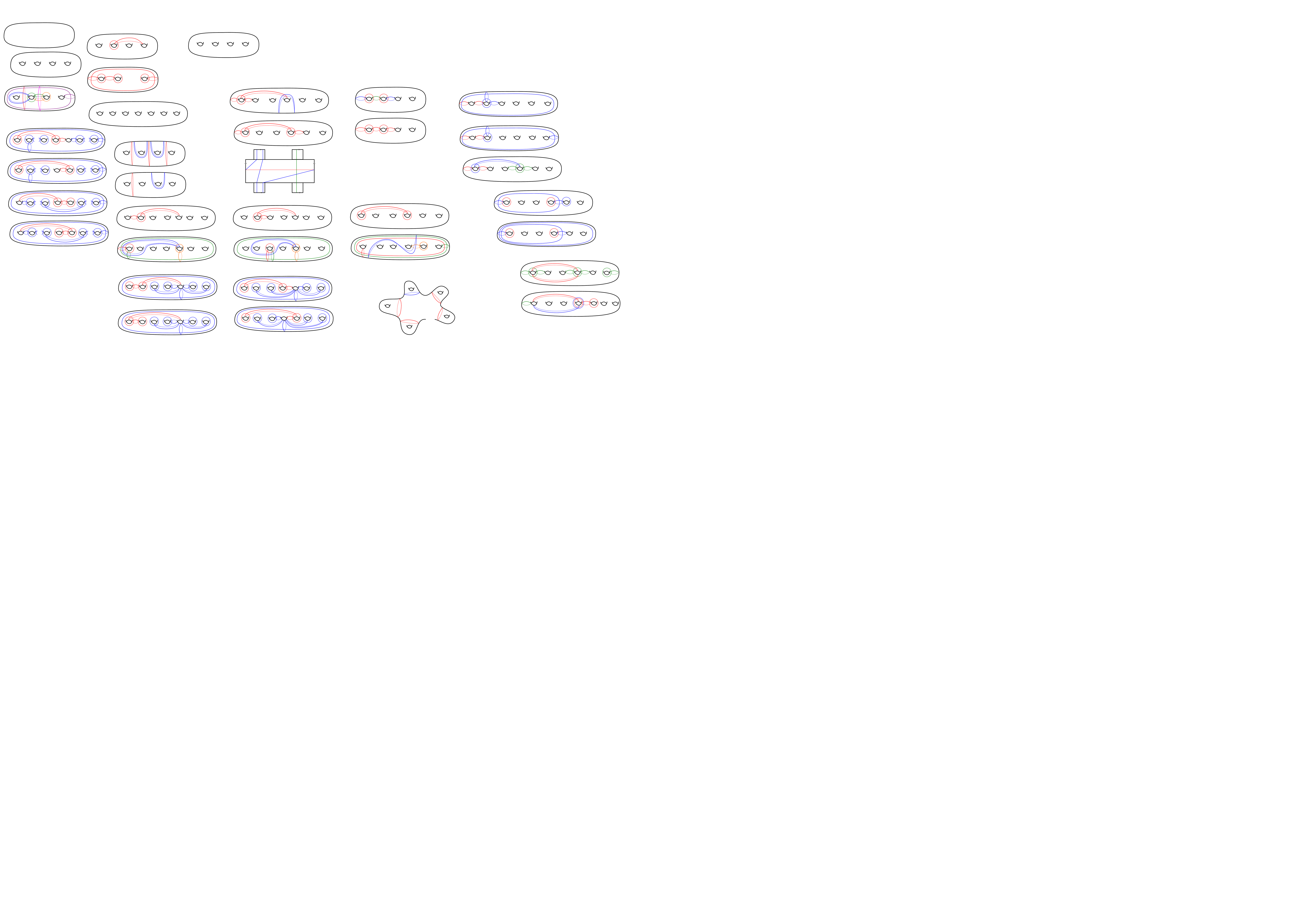}
\caption{A chain of length $2g-4$ that fills the complement of the chain of length 4. Top row: the case $|\pi|=1$. Bottom row: the case $|\pi|=0$.}
\label{fig:Y-rigid}
\end{figure}
\end{proof}

\section{Incidence-preserving maps $Y^s\rightarrow\mathcal C^s(\Sigma)$} 

The goal of this section is to prove the following two theorems, which say that incidence-preserving maps $\phi:Y^s\rightarrow\mathcal C^s(\Sigma)$ preserve genus and (certain) sharing pairs. 

\begin{thm}[Genus is preserved]\label{thm:genus}
Fix $Y^s$ as defined in (\ref{eqn:rigid-set-sep}), and let $\phi:Y^s\rightarrow\mathcal C^s(\Sigma)$ be an incidence-preserving map. For $\alpha\in Y^s$, the curves $\alpha$ and $\phi(\alpha)$ have the same genus. 
\end{thm}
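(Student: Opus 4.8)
The plan is to prove Theorem~\ref{thm:genus} by a mutual induction on genus, carried out simultaneously with Theorem~\ref{thm:sharing-pair}, exactly as advertised in the introduction. Throughout I would use that $\phi$ is injective (Remark~\ref{rmk:injective}), so that $i(\alpha,\beta)=0 \iff i(\phi(\alpha),\phi(\beta))=0$ and distinct curves have distinct images; this lets me transport the entire incidence pattern of a configuration in $Y^s$ to its image. I would also cut down the casework using the dihedral and hyperelliptic symmetries of $Y$ (Lemma~\ref{lem:permute}, Remark~\ref{rmk:permute}): since these act transitively on the natural families of curves of a fixed genus and type, it suffices to establish genus preservation for one representative of each type $S$, $U$, $V$ and each admissible interval length.

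The key device is to attach to each $\alpha\in Y^s$ a finite configuration of curves in $Y^s$ whose disjointness/intersection relations force any separating curve realizing those relations to have genus equal to that of $\alpha$. For genus-$1$ curves this is the base case, and it must be argued \emph{without} reference to sharing pairs in order to break the circularity. A genus-$1$ separating curve is characterized by minimality: its genus-$1$ side is a one-holed torus and so contains no essential separating curve, whereas a curve of genus $\ge 2$ bounds on each side a subsurface into which a genus-$1$ separating curve can be nested. I would make this incidence-theoretic by noting that a genus-$\ge 2$ image would admit two separating curves, disjoint from it and from one another but lying on opposite sides of it, a configuration that does not occur around the genus-$1$ curves of $Y^s$; the curves $s_J\in S$ together with the chain $C$ supply the required witnesses, and injectivity guarantees the witnessing pattern survives under $\phi$.

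With genus $1$ in hand, I would feed it into Corollary~\ref{cor:sharing-pair} to obtain the genus-$2$ cases: once $\phi$ is known to preserve the genus-$1$ legs and the genus-$2$ container $z$ of a sharing pair lying in $Y^s$, the corollary shows $\phi$ preserves that sharing pair, whence the legs $\phi(\alpha),\phi(\beta)$ are genuinely genus $1$. Running this in tandem with the direct nesting argument pins down the genus-$2$ curves of $V$ and the $s_J$ with $|J|=4$. For the higher-genus curves I would induct using the nested family $s_{[i,i+1]}\subset s_{[i-1,i+2]}\subset\cdots$ of separating curves of genera $1,2,3,\dots$ already present in $Y^s$: each consecutive pair is disjoint, the flag is detected combinatorially by auxiliary chain curves, and hence the image flag has strictly increasing genus, so a maximality/counting argument forces $\phi(s_J)$ to have genus exactly $\min(|J|/2,\,g-|J|/2)$.

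The main obstacle is precisely the intertwining with Theorem~\ref{thm:sharing-pair}: the genus-$1$ and genus-$2$ statements cannot be cleanly separated from the sharing-pair statements, so the induction must be organized so that every invocation of Corollary~\ref{cor:sharing-pair} draws only on genus facts proved at an earlier stage. A second, more technical difficulty is the loss of flexibility emphasized in the introduction: on the full complex one may freely produce a separating curve realizing any prescribed incidence pattern, but on the finite set $Y^s$ one must verify by hand that the witnesses certifying the genus of $\alpha$---in particular the curves distinguishing ``nested'' from ``side-by-side'' pairs, and the sharing-pair certificates $w,x,y,z$ of Lemma~\ref{lem:sharing-pair}---actually lie in $Y^s$. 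Checking the existence of these witnesses for every orbit representative, and confirming that their images are constrained tightly enough to pin down the genus, is where I expect the bulk of the labor to fall.
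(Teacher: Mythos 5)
Your high-level architecture --- bootstrapping genus preservation against Theorem~\ref{thm:sharing-pair}, cutting casework with the symmetries of Lemma~\ref{lem:permute}, and certifying genus by finite configurations inside $Y^s$ --- matches the paper's. But at the two places where you commit to an actual mechanism, the argument has genuine gaps. Your base case is logically backwards: you argue that if $\phi(\alpha)$ had genus $\ge 2$ it ``would admit two separating curves, disjoint from it and from one another but lying on opposite sides of it, a configuration that does not occur around the genus-$1$ curves of $Y^s$.'' Incidence-preservation only transports configurations \emph{from} $Y^s$ \emph{into} $\mathcal C^s(\Sigma)$; the existence of flanking curves in $\mathcal C^s(\Sigma)$ around $\phi(\alpha)$ contradicts nothing unless those curves lie in $\phi(Y^s)$, and the non-occurrence of a pattern in $Y^s$ around $\alpha$ places no constraint whatsoever on the neighborhood of $\phi(\alpha)$ in the target. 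Relatedly, you propose to draw witnesses from ``the chain $C$,'' but $C$ consists of nonseparating curves, so $\phi$ is not defined on them; every witness must come from $Y^s$. The paper's actual device for bounding genus from \emph{above} is a counting argument your proposal lacks: each $\alpha\in S\cup U$ sits in a maximal simplex $\Delta$ of $\mathcal C^s(\Sigma)$ contained in $X^s$; one shows, using witnesses in $Y^s$ that intersect pairs of curves of $\Delta$ lying on a common side of $\alpha$, that $\phi$ preserves the partition of $\Delta$ into the two sides of $\alpha$, and then uses that $\phi(\Delta)$ is again a maximal collection of disjoint separating curves, exactly $g$ of which have genus $1$, to determine the genus of every curve of $\Delta$ at once. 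This handles all of $S\cup U$ uniformly, with no induction on genus and no separate flag argument for $s_J$ with $|J|\ge 4$.

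For the genus-$2$ curves of $V$ your plan is circular as written: Corollary~\ref{cor:sharing-pair} takes ``$\phi(z)$ has genus $2$'' as a \emph{hypothesis}, so it cannot be the thing that proves the images of the $V$-curves have genus $2$. The paper instead exhibits $g-2$ disjoint genus-$1$ curves of $X^s$ in the genus-$(g-2)$ side of $v\in V$ (so that $\phi(v)$ has genus $1$ or $2$) and then rules out genus $1$ by producing a sharing pair in the genus-$2$ side of $v$, already known to map to a sharing pair, which cannot be accommodated in the complementary genus-$1$ subsurface with $g-1$ boundary components. Your ``nested flag'' induction for the higher-genus curves of $S$ would likewise require proving that the image flag is nested with strictly increasing genus, which is precisely the side-preservation statement at issue and, as proposed, rests on the same inadmissible witnesses.
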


\begin{thm}[Sharing pair is preserved]\label{thm:sharing-pair}
Let $\phi:Y^s\rightarrow\mathcal C^s(\Sigma)$ be an incidence-preserving map, and let $(\alpha,\beta)$ be a sharing pair with one of the following spines. 
\begin{enumerate}
\item[(i)] $(c_i,c_{i+1},c_{i+2})$
\end{enumerate}
For any interval $[i,j]$ of odd length $\ge3$, 
\begin{enumerate} 
\item[(ii)] $(c_{i-2},c_{i-1},b_{[i,j]}^\pm)$
\item[(iii)] $(c_i,c_{i-1},b_{[i,j]}^\pm)$
\item[(iv)] $(b_{[i,j]}^\pm,c_{j+1},c_{j+2})$
\item[(v)] $(b_{[i,j]}^\pm,c_{j+1},c_{j})$
\item[(vi)] $(c_{i-1},b_{[i,j]}^\pm,c_{j+1})$
\end{enumerate} 
Then $(\phi(\alpha),\phi(\beta))$ is also a sharing pair. 
\end{thm}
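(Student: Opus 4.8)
The plan is to verify the hypotheses of Corollary \ref{cor:sharing-pair} for each of the six spine types. Thus, for a sharing pair $(\alpha,\beta)$ with one of the listed spines, I would first exhibit certifying curves $w,x,y,z\in Y^s$. Here $\alpha,\beta$ are genus-$1$ curves lying in $S\cup U$ (for instance, for spine (i) with spine $(c_i,c_{i+1},c_{i+2})$ one has $\alpha=s_{[i,i+1]}$ and $\beta=s_{[i+1,i+2]}$), and a suitable genus-$2$ curve $z$ together with curves $w,x,y$ can be read off from the chain structure: $z$ can be taken to be an $s_J$ with $|J|=4$, or a curve in $V$, whose bounded genus-$2$ subsurface contains $\alpha$ and $\beta$. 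That these certifying curves exist in $Y^s$ and satisfy the incidence conditions of Lemma \ref{lem:sharing-pair} is a finite, explicit check in each case, and the symmetries $r,s$ of Lemma \ref{lem:permute} (see Remark \ref{rmk:permute}) cut the number of genuinely distinct cases down substantially. Granting this, Corollary \ref{cor:sharing-pair} reduces everything to three claims: (a) $\phi(\alpha),\phi(\beta)$ have genus $1$; (b) $\phi(z)$ has genus $2$; and (c) $\phi(\alpha),\phi(\beta)$ lie in the genus-$2$ subsurface bounded by $\phi(z)$.

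Claims (a) and (b) are instances of Theorem \ref{thm:genus} applied to the specific genus-$1$ curves $\alpha,\beta$ and the genus-$2$ curve $z$. Here I would be careful about circularity: as flagged in the introduction, the proofs of Theorems \ref{thm:genus} and \ref{thm:sharing-pair} are genuinely interdependent, so rather than invoking all of Theorem \ref{thm:genus} at once I would organize a simultaneous induction. The natural base case is spine (i), whose certifying curves lie entirely in $S$; for these the needed genus-preservation facts should be accessible directly, and the resulting sharing-pair conclusion can then be fed into the genus argument for the $U$- and $V$-curves, which in turn unlocks spines (ii)--(vi).

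The crux is claim (c). One reduction is available for free: since $\alpha$ and $\beta$ intersect, incidence-preservation gives $i(\phi(\alpha),\phi(\beta))\neq0$, while both are disjoint from $\phi(z)$; as two curves lying in distinct components of $\Sigma\setminus\phi(z)$ would be disjoint, $\phi(\alpha)$ and $\phi(\beta)$ must lie on a common side of $\phi(z)$. It then remains only to exclude the possibility that this common side is the genus-$(g-2)$ subsurface. This is precisely where restricting to the finite set $Y^s$ bites: the usual move of producing an auxiliary separating curve distinguishing the two sides of $z$ need not land in $Y^s$. I would try to pin the side down using nested genus data inside $Y^s$ — for instance a genus-$3$ curve $z'$ whose bounded subsurface contains $\Sigma_z$ together with $\alpha,\beta$ — so that, once the side of $\phi(z)$ relative to $\phi(z')$ is known, the genus-$(g-2)$ component of $\Sigma\setminus\phi(z)$ becomes too large (for $g\ge5$) to fit on the genus-$3$ side of $\phi(z')$. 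The honest difficulty is that determining the side at each level reproduces the same problem, so the induction must bottom out at configurations where the side is forced by the finer incidence data of $w,x,y$ rather than by genus alone.

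I expect the main obstacle to be exactly this side-determination, made uniform over all six spines and all $g\ge4$, and in particular the degenerate case $g=4$, where both components of $\Sigma\setminus\phi(z)$ have genus $2$ so that genus cannot distinguish them at all. There I would lean on the asymmetric incidence pattern of the certifying curves — $w$ meets $z$ but neither $\alpha$ nor $\beta$, while $x$ and $y$ each meet exactly one of $\alpha,\beta$ — together with the already-established spine-(i) cases, to break the symmetry. Interleaving all of this so that each case of (c) invokes only previously-established instances of Theorems \ref{thm:genus} and \ref{thm:sharing-pair} is the delicate bookkeeping I expect to dominate the argument.
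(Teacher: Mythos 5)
Your framework matches the paper's: reduce to Corollary \ref{cor:sharing-pair}, organize a simultaneous induction with Theorem \ref{thm:genus} (the paper's order is exactly your (A)--(F)-style bootstrap: genus on $X^s$, then spine (i), then genus on the $V$-curves with one empty interval, then spines (ii)--(v), then the remaining $V$-curves, then spine (vi)), and reduce claim (c) to showing $\phi(\alpha),\phi(\beta)$ do not land on the genus-$(g-2)$ side of $\phi(z)$. But the crux you correctly isolate --- the side determination --- is left genuinely unresolved in your proposal, and the mechanism you sketch (nesting inside a genus-$3$ curve $z'$ and descending) is not the one that works: as you yourself note, it reproduces the same problem one level up and never bottoms out. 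The paper's resolution is different and does not distinguish the two sides of $\phi(z)$ by genus at all; it tracks \emph{which curves} go to which side. For $z\in S$ one embeds $z$ and $\alpha$ in the maximal simplex $\Delta$ of the proof of (A): for every pair $x,x'\in\Delta$ on the same side of $z$ there is an auxiliary $\gamma\in Y^s$ meeting both and missing $z$, so $\phi(\Delta_L)$ and $\phi(\Delta_R)$ each land on a single side of $\phi(z)$, and since $\phi(\Delta)$ is again a maximal simplex the count of genus-$1$ curves pins down which side is which; in particular $\phi(\alpha)$ sits in the genus-$2$ side. For $z\in V$ the statement $(\ddagger)$ supplies $g-2$ disjoint genus-$1$ curves of $X^s$ whose images fill up the genus-$(g-2)$ side of $\phi(z)$, which forces the image of the sharing pair from $(\dagger)$ --- and hence $\phi(\alpha),\phi(\beta)$ --- into the genus-$2$ side. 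Without some argument of this kind your claim (c) is simply asserted, so the proof is incomplete.

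Two smaller points. First, your worry about $g=4$ is misplaced: when $g=4$ both components of $\Sigma\setminus\phi(z)$ have genus $2$, so once you know $\phi(\alpha)$ and $\phi(\beta)$ lie on a common side (which you correctly get from $i(\alpha,\beta)\neq0$), claim (c) is automatic; it is the easiest case, not the hardest. Second, be aware that the side-determination tool is exactly why the genus statements (C) and (E) must be interleaved \emph{between} the sharing-pair cases rather than proved afterwards: the proof of (C) needs spine (i) to know that the sharing pair $(\phi(\beta_1),\phi(\beta_2))$ exists, and spines (ii)--(v) then need the location information extracted from the proof of (C). Your proposal gestures at this ordering but does not supply the content that makes each step feed the next.
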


We prove these results in a ``bootstrapping" fashion. We prove, in order, the following statements. 
\begin{enumerate}[(A)]
\item The genus is preserved for curves in $X^s$.
\item Sharing pairs of type (i) are preserved. 
\item The genus is preserved for $v_{\pi*J*\sigma}^\pm\in V$ such that either $\pi$ or $\sigma$ is empty. 
\item Sharing pairs of type (ii)--(v) are preserved. 
\item The genus is preserved for $v_{\pi*J*\sigma}^\pm\in V$ such that both $\pi$ and $\sigma$ are nonempty. 
\item Sharing pairs of type (vi) are preserved. 
\end{enumerate} 

Each statement is used to prove the next statement. Together these statements prove Theorems \ref{thm:genus} and \ref{thm:sharing-pair}.

We begin with the proofs. Fix an incidence-preserving map $\phi:Y^s\rightarrow\mathcal C^s(\Sigma)$. 

\boxed{\text{Proof of (A)}}
Fix $\alpha\in X^s$. We want to show that $\alpha$ and $\phi(\alpha)$ have the same genus. Recall that $X^s=S\cup U$. Up to a permutation of $X$ (c.f.\ Remark \ref{rmk:permute}), we can assume $\alpha$ belongs to the collection $\Delta$ of curves in Figure \ref{fig:lem-genus}, which form a maximal simplex of $X^s$, and also a maximal simplex of $\mathcal C^s(\Sigma)$. The curve $\alpha$ gives a decomposition $\Delta=\Delta_L\cup\{\alpha\}\cup\Delta_R$ into curves on either side of $\alpha$. (If $\alpha$ has genus 1, then one of $\Delta_L$ or $\Delta_R$ is empty.)

\begin{figure}[h!]
\labellist
\pinlabel $\cdots$ at 490 2360
\endlabellist
\centering
\includegraphics[scale=.6]{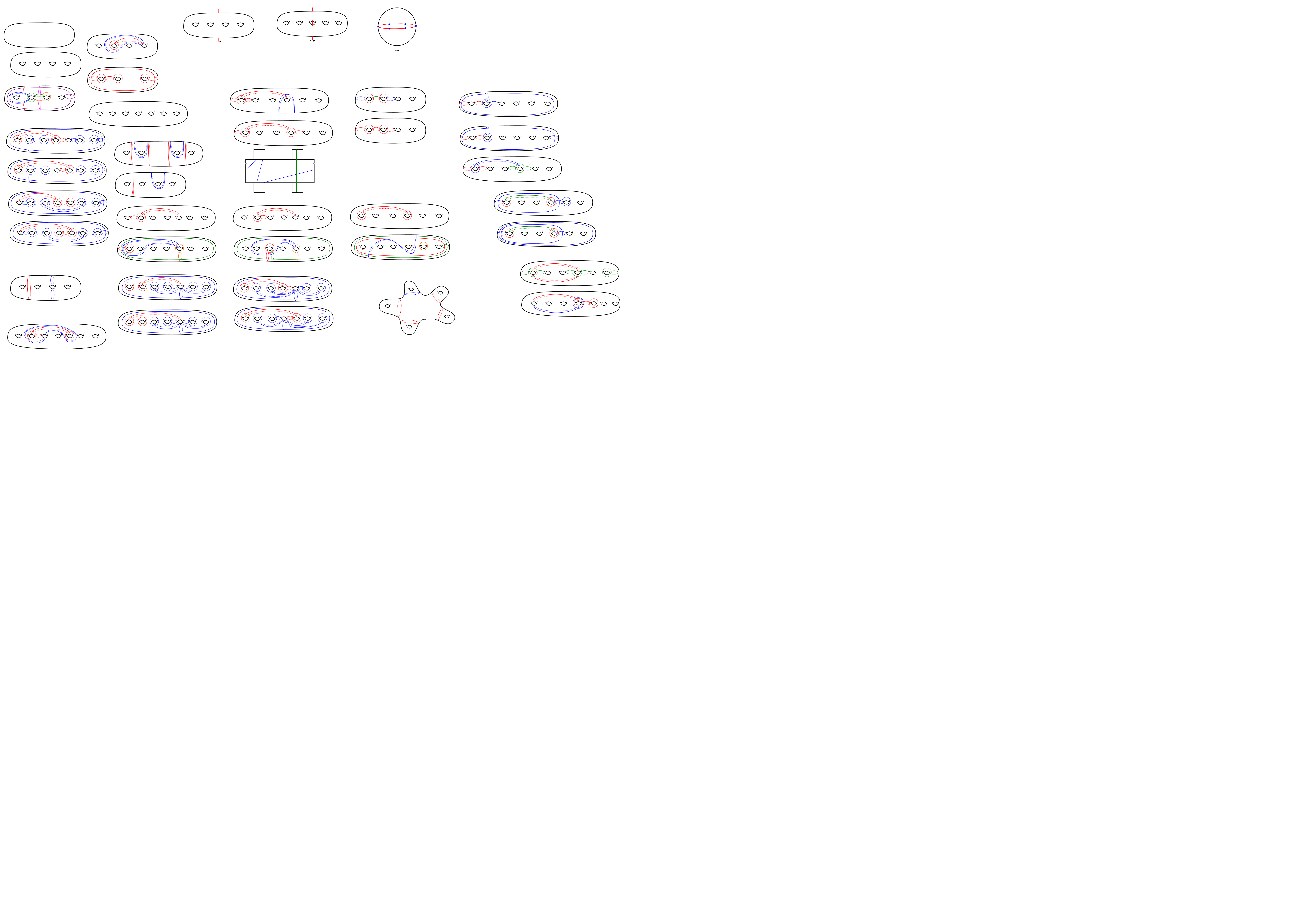}
\caption{Curves in $\Delta$}
\label{fig:lem-genus}
\end{figure}



{\it Claim.} The curves in $\phi(\Delta_L)$ lie on the same side of $\phi(\alpha)$, and similarly for $\phi(\Delta_R)$. 

Given the claim, we deduce that $\alpha$ and $\phi(\alpha)$ have the same genus as follows. The genus of a separating curve $\gamma$ is determined by the maximum number of disjoint separating curves that fit on either of the two components of $\Sigma\setminus\gamma$. The claim implies that this number is the same for $\alpha$ and $\phi(\alpha)$ unless $\phi(\Delta_L)$ and $\phi(\Delta_R)$ lie on the same side of $\phi(\alpha)$. This is not an issue when $\alpha$ has genus 1 (since then one of $\Delta_L$ or $\Delta_R$ is empty). Assume that $\alpha$ has genus $\ge2$ and suppose for a contradiction that $\phi(\Delta_L)$ and $\phi(\Delta_R)$ lie on the same side of $\phi(\alpha)$. Since $\phi(\Delta_L)$ is disjoint from $\phi(\Delta_R)$, this implies that $\phi(\alpha)$ has genus 1. This implies that $\phi(\Delta)$ has more genus-1 curves than $\Delta$, but this is impossible because every maximal collection of separating curves on $\Sigma$ has exactly $g$ curves with genus $1$. 

To prove the claim, it suffices to observe that for each pair $x,x'\in\Delta_L$, there exists $y\in Y^s$ that intersects both $x$ and $x'$ and such that $y$ is disjoint from $\alpha$. This is easy to check and implies that $\phi(y)$ intersects both $\phi(x)$ and $\phi(x')$, and $\phi(y)$ is disjoint from $\phi(\alpha)$. Thus $\phi(x)$ and $\phi(x')$ lie on the same side of $\phi(\alpha)$. The same argument works for $\Delta_R$.

\boxed{\text{Proof of (B)}} The strategy for proving cases of Theorem \ref{thm:sharing-pair} is the same in every case. By Corollary \ref{cor:sharing-pair}, it suffices to find curves $w,x,y,z\in Y^s$ that certify that $(\alpha,\beta)$ is a sharing pair and show the following properties: $\phi(\alpha)$ and $\phi(\beta)$ are genus-1 curves; $\phi(z)$ is a genus-2 curve; $\phi(\alpha)$ and $\phi(\beta)$ lie in the genus-2 subsurface bounded by $\phi(z)$. It is easy enough to find $w,x,y,z$. But to know the genus of $\phi(\alpha),\phi(\beta),\phi(z)$, we need $\alpha,\beta,z$ to belong to the list of cases for which Theorem \ref{thm:genus} has been verified. 

Fix a sharing pair $(\alpha,\beta)$ with spine $(c_i,c_{i+1},c_{i+2})$. By a permutation, it suffices to consider $(c_0,c_1,c_2)$. Here we take $x=c_{\{2g,2g+1\}}$, $y=c_{\{3,4\}}$, $z=c_{[0,3]}$, and $w=c_{\{4,5\}}$, as in Figure \ref{fig:sharing-pair}. Note that here $\alpha=s_{\{0,1\}}$, $\beta=s_{\{1,2\}}$, and $z=c_{[0,3]}$ all belong to $X^s$, so we know that $\phi(\alpha),\phi(\beta),\phi(z)$ have the right genus by (A). Moreover, we know that $\phi(\alpha)$ is in the genus-2 subsurface bounded by $\phi(z)$ by the proof of (A). The same is true for $\phi(\beta)$ because $\phi$ is an incidence-preserving map and $\alpha$ intersects $\beta$. Hence by Corollary \ref{cor:sharing-pair}, the curves $\phi(x),\phi(y),\phi(z),\phi(w)$ certify that $(\phi(\alpha),\phi(\beta))$ is a sharing pair.

\boxed{\text{Proof of (C)}} The strategy for (C) and (E) are similar. We are given a genus-2 curve $\alpha\in V$ and we want to show $\phi(\alpha)$ has genus 2. We will reduce to showing the following two statements. 

\begin{enumerate}
\item[($\dagger$)] There exists a sharing pair $(\beta_1,\beta_2)$ that is contained in the genus-2 component of $\Sigma\setminus\alpha$ and that belongs to the list of sharing pairs for which Theorem \ref{thm:sharing-pair} has been verified. 
\item[($\ddagger$)] There exist distinct, disjoint genus-1 curves $x_1,\ldots,x_{g-2}$ in $X^s$ that lie in the genus-$(g-2)$ component of $\Sigma\setminus\alpha$ and such that for every pair $x_j,x_k$, there exists a third curve $\gamma\in Y^s$ that intersects both $x_j$ and $x_k$ and is disjoint from $\alpha$.
\end{enumerate} 

Once we prove $(\dagger)$ and $(\ddagger)$, we conclude that $\phi(\alpha)$ has genus 2 as follows. First, the curves $\phi(x_1),\ldots,\phi(x_{g-2})$ are disjoint (because $x_1,\ldots,x_{g-2}$ are disjoint) genus-1 curves (because $x_1,\ldots,x_{g-2}$ are in $X^s$), and they lie in one component of $\Sigma\setminus\phi(\alpha)$ (by an argument similar to the argument for (A)). This shows that there is a component of $\Sigma\setminus\phi(\alpha)$ that has genus $\ge g-2$. Then $\phi(\alpha)$ either has genus 1 or 2. 

Suppose for a contradiction that $\phi(\alpha)$ has genus 1. Consider the sharing pair $(\beta_1,\beta_2)$ given in $(\ddagger)$. Since Theorem \ref{thm:sharing-pair} has been verified for $(\beta_1,\beta_2)$, we know that $(\phi(\beta_1),\phi(\beta_2))$ is a sharing pair. Let $\Sigma_j$ be the genus-1 subsurface bounded by $\phi(\beta_j)$, $j=1,2$. Since we assume that $\phi(\alpha)$ has genus 1, the curves $\phi(\beta_1),\phi(\beta_2)$ lie in the component of $\Sigma\setminus\phi(\alpha)$ with genus $g-1$. Since $\phi(\beta_1)$ and $\phi(\beta_2)$ are disjoint from the genus-1 curves $\phi(x_1),\ldots,\phi(x_{g-2})$, we deduce that $\phi(\beta_1),\phi(\beta_2)$ lie in a subsurface $\Sigma'$ with genus 1 and $g-1$ boundary components. See Figure \ref{fig:thm-genus2}. One component $\Sigma''$ of $\Sigma'\setminus\phi(\beta_1)$ has genus 0 and $g$ boundary components. Any arc on $\Sigma''$ with endpoints on a single component of $\partial\Sigma''$  (such as $\phi(\beta_2)\cap\Sigma''$) separates $\Sigma''$, and since the components of $\partial\Sigma''$ are separating in $\Sigma$, it follows that $\Sigma\setminus(\Sigma_1\cup\Sigma_2)$ is disconnected, contradicting the fact that $\phi(\beta_1),\phi(\beta_2)$ is a sharing pair. Therefore, $\phi(\alpha)$ does not have genus 1. 

\begin{figure}[h!]
\labellist
\pinlabel $\phi(\alpha)$ at 1215 1890
\pinlabel $\phi(\beta_1)$ at 1270 1918
\pinlabel $\phi(x_1)$ at 1330 1900
\pinlabel $\phi(x_2)$ at 1328 1870
\pinlabel $\phi(x_{g-2})$ at 1260 1860
\pinlabel $\Sigma''$ at 1275 1890
\pinlabel $...$ at 1320 1855
\endlabellist
\centering
\includegraphics[scale=.8]{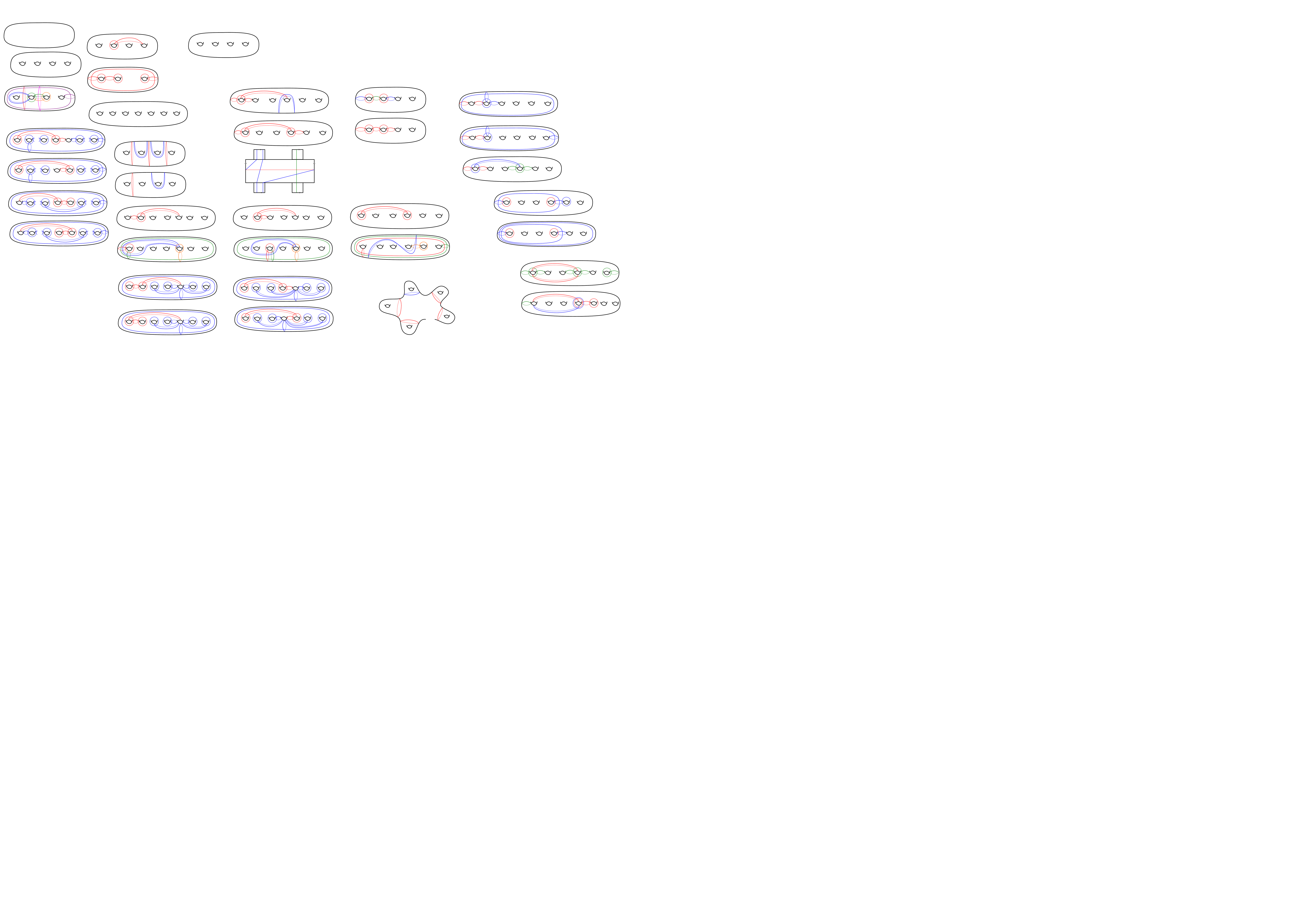}
\caption{}
\label{fig:thm-genus2}
\end{figure}

Now we prove $(\dagger)$ and $(\ddagger)$ for $\alpha=v_{\pi*J*\sigma}^\pm$ with $\pi$ or $\sigma$ empty. After a permutation, we can assume $\sigma=\varnothing$, $\pi=\{1,2,3\}$ and either $J=[2,2k]$ or $J=[4,2k]$. See Figure \ref{fig:statement-C}. 
\begin{figure}[h!]
\labellist
\endlabellist
\centering
\includegraphics[scale=.6]{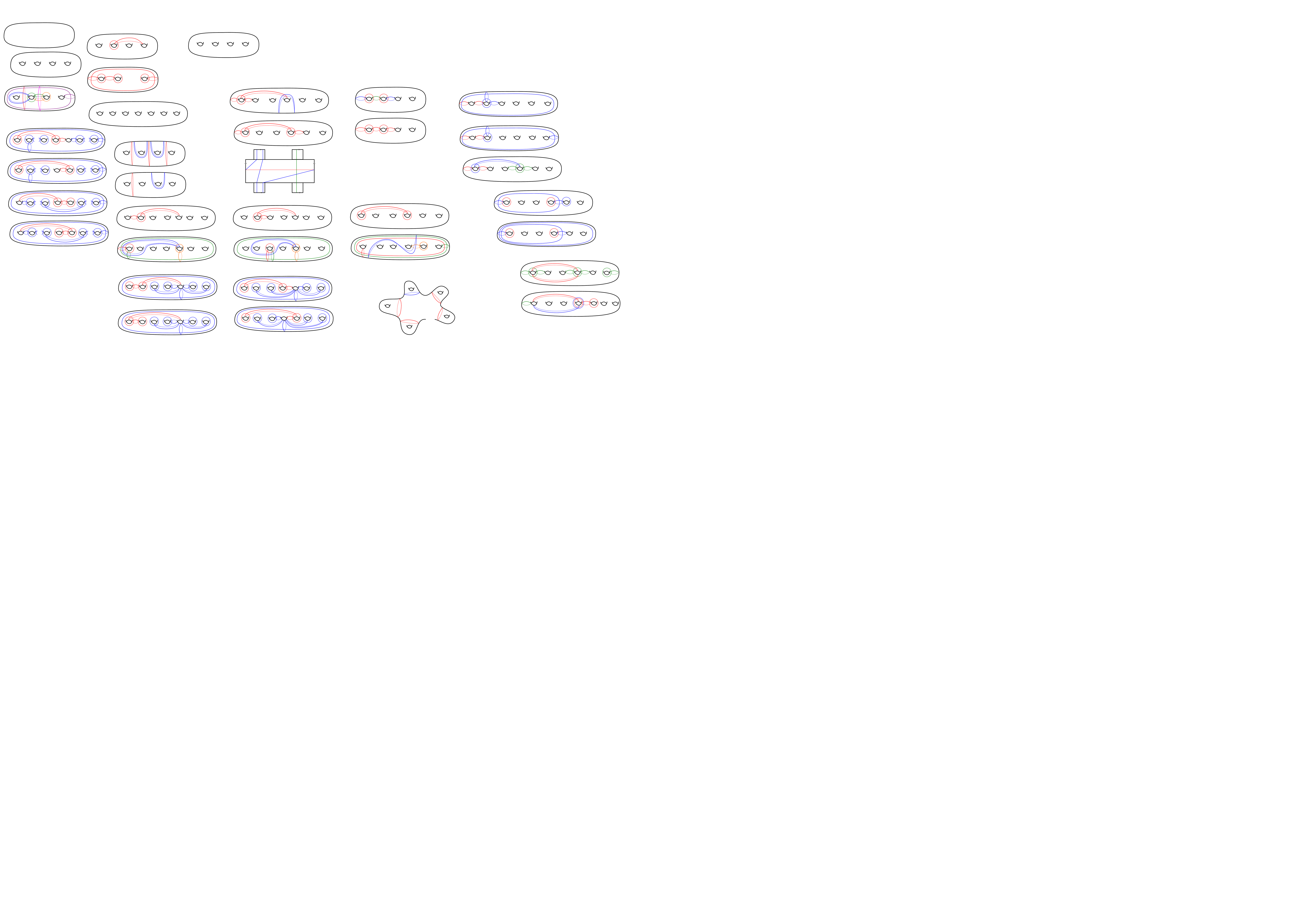}\hspace{.5in}
\includegraphics[scale=.6]{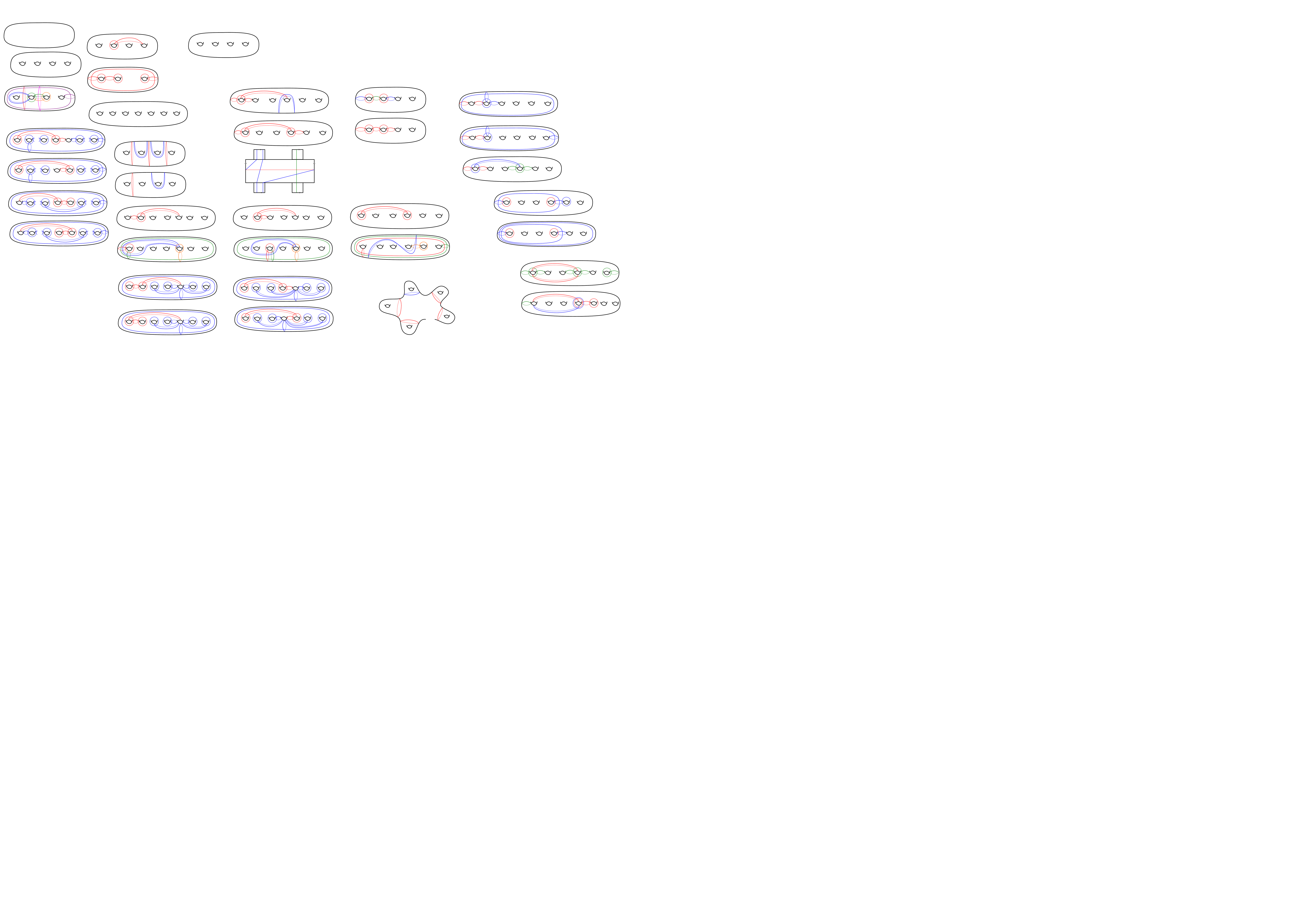}
\caption{}
\label{fig:statement-C}
\end{figure}
In these cases, the genus-2 component of $\Sigma\setminus\alpha$ contains the sharing pair with spine $(c_1,c_2,c_3)$, which is covered by (B). Therefore $(\dagger)$ holds. Figure \ref{fig:statement-C} also shows a choice of $g-2$ disjoint genus-1 curves $x_1,\ldots,x_{g-2}\in X^s$ in the genus-$(g-2)$ component of $\Sigma\setminus\alpha$. Now $(\ddagger)$ is easily verified. 

\boxed{\text{Proof of (D)}} We proceed just as in (B). For case (ii) it suffices to treat $(c_2,c_3,b_{[4,2k]}^+)$. Here we choose $x=c_{\{0,1\}}$, $y=u_{2k+1,[0,2k]}^-$, $z=v_{\{1,2,3\}*[4,2k]}^+$, and $w=c_{\{2g+1,0\}}$. See Figure \ref{fig:statement-D} (top right). Observe that the curves $\alpha,\beta\in X^s$ and $z\in V$ are of the form covered by (C). We also know by the proof of (C) that $\phi(\alpha),\phi(\beta)$ are in the genus-2 subsurface bounded by $\phi(z)$ because the sharing pair $(\phi(\alpha)=\phi(s_{\{2,3\}}), \phi(s_{\{1,2\}}))$ is in the genus-2 subsurface bounded by $\phi(z)$. Case (iii) is similar: it suffices to treat $(c_4, c_3, b_{[4,2k]}^+)$, and we choose $w,x,y,z$ in Figure \ref{fig:statement-D} (bottom right).

\begin{figure}[h!]
\labellist
\pinlabel $w$ at 770 2130
\pinlabel $x$ at 730 2170
\pinlabel $y$ at 920 2150
\pinlabel $z$ at 820 2150
\pinlabel $w$ at 865 2035
\pinlabel $x$ at 840 2035
\pinlabel $y$ at 930 2033
\pinlabel $z$ at 790 2093
\pinlabel $...$ at 490 2168
\pinlabel $...$ at 605 2168
\endlabellist
\centering
\includegraphics[scale=.6]{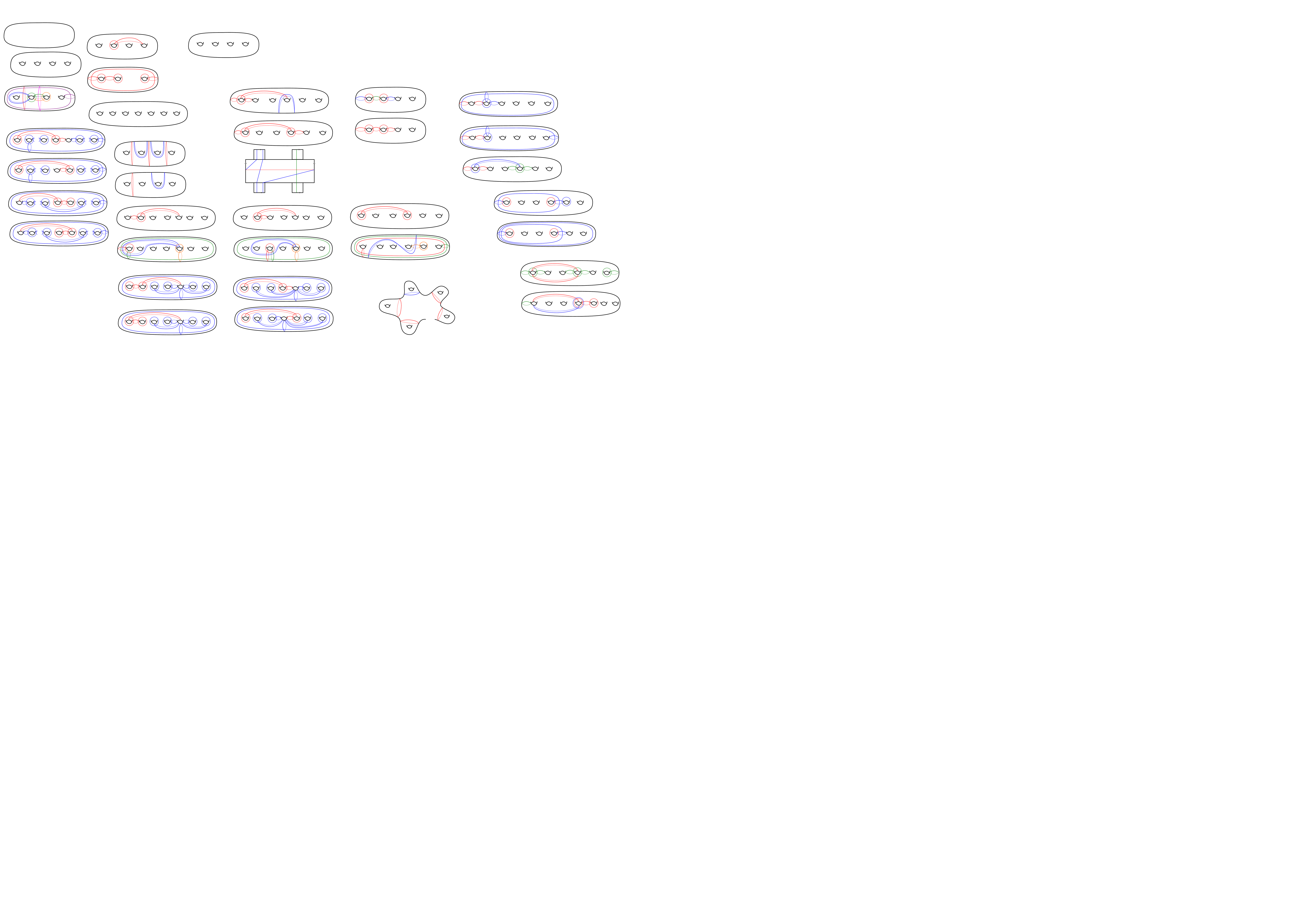}\hspace{.5in}
\includegraphics[scale=.6]{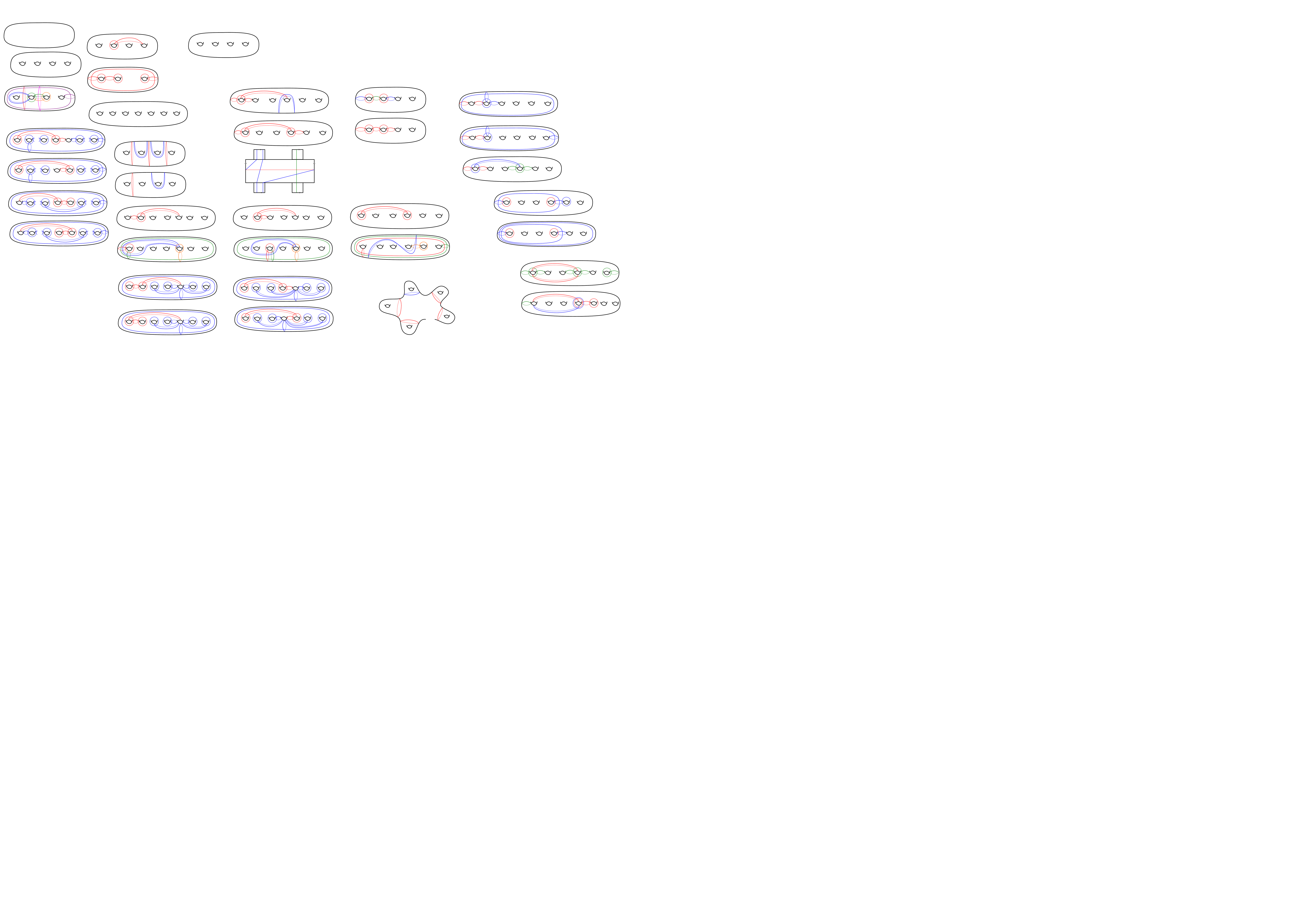}
\includegraphics[scale=.6]{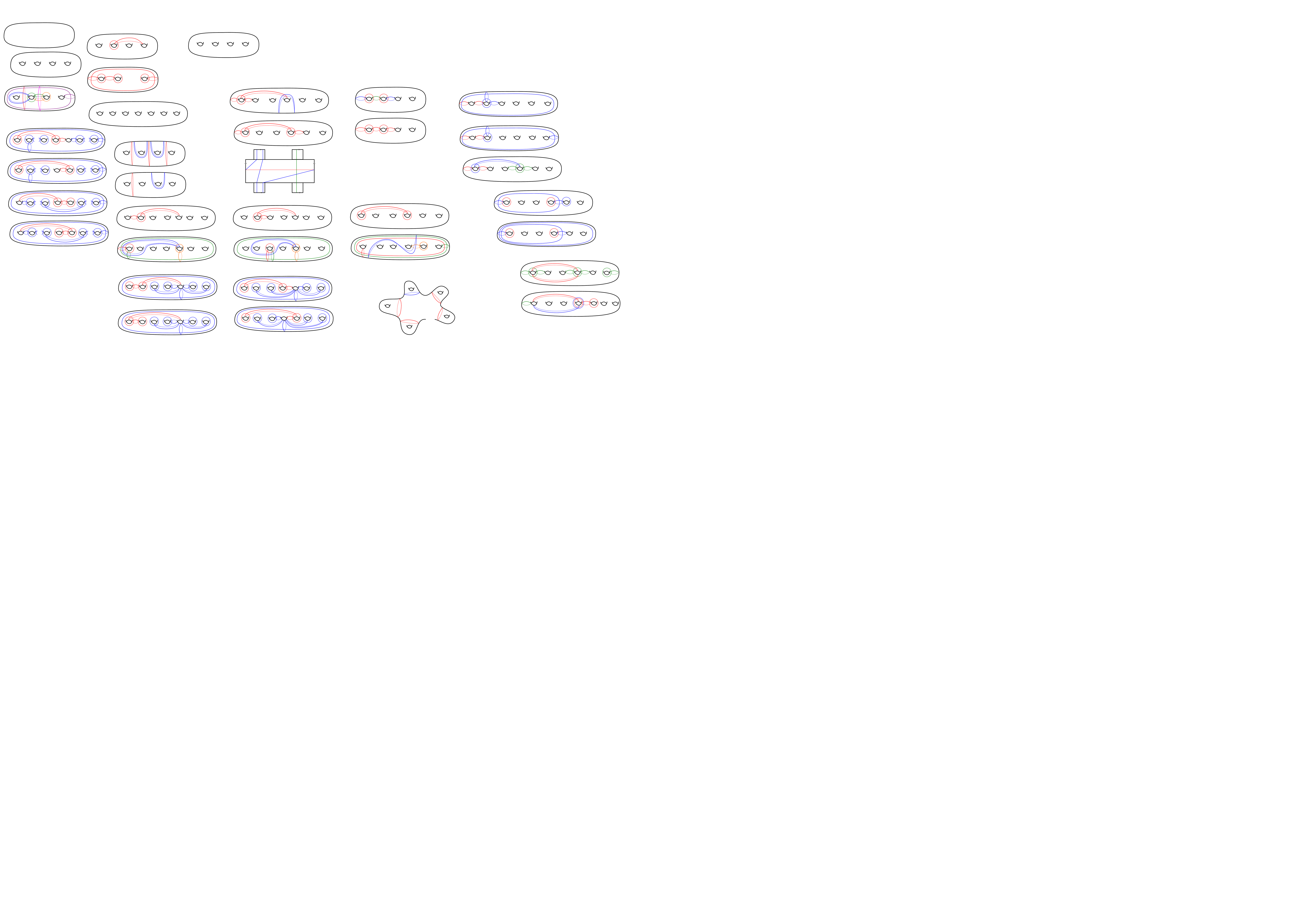}\hspace{.5in}
\includegraphics[scale=.6]{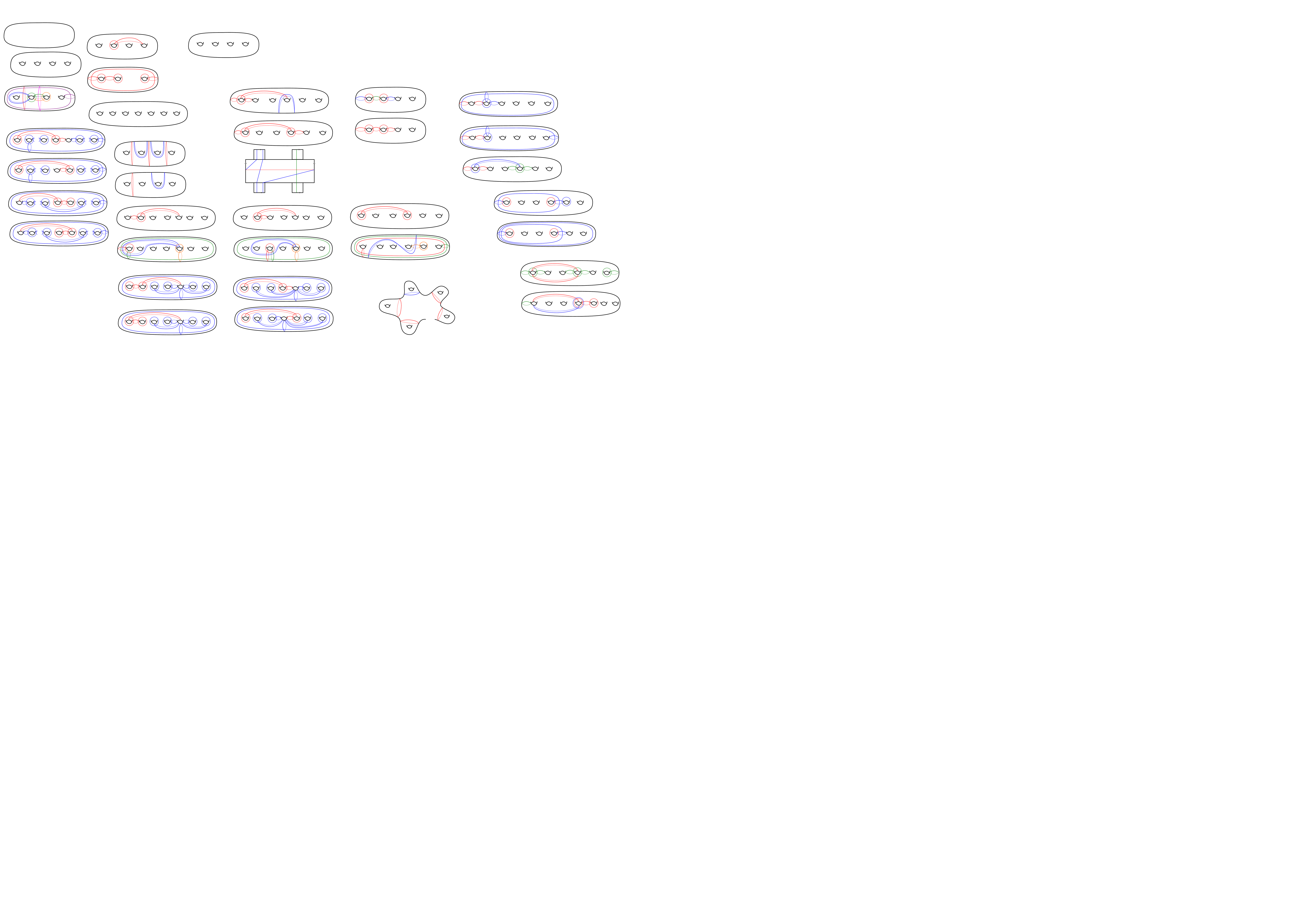}
\caption{Top row: sharing pair of Theorem \ref{thm:sharing-pair}(ii). Bottom row: sharing pair of Theorem \ref{thm:sharing-pair}(iii).}
\label{fig:statement-D}
\end{figure}



Finally, observe that cases (iv), (v) are equivalent to (ii) and (iii), up to automorphisms of $X$. 

\boxed{\text{Statement (E)}} As in (C), we proceed by proving the statements $(\dagger)$ and $(\ddagger)$. 

Fix $\alpha=v_{\pi*J*\sigma}^\pm$ with both $\pi$ and $\sigma$ nonempty. After a permutation, it suffices to consider the case $\pi=\{1\}$, $J=[2,2k]$, and either $\sigma=\{2k+1,2k+2\}$ or $\sigma=\{2k,2k+1\}$. See Figure \ref{fig:statement-E}.

\begin{figure}[h!]
\labellist
\pinlabel $...$ at 810 1960
\pinlabel $...$ at 962 1960
\endlabellist
\centering
\includegraphics[scale=.6]{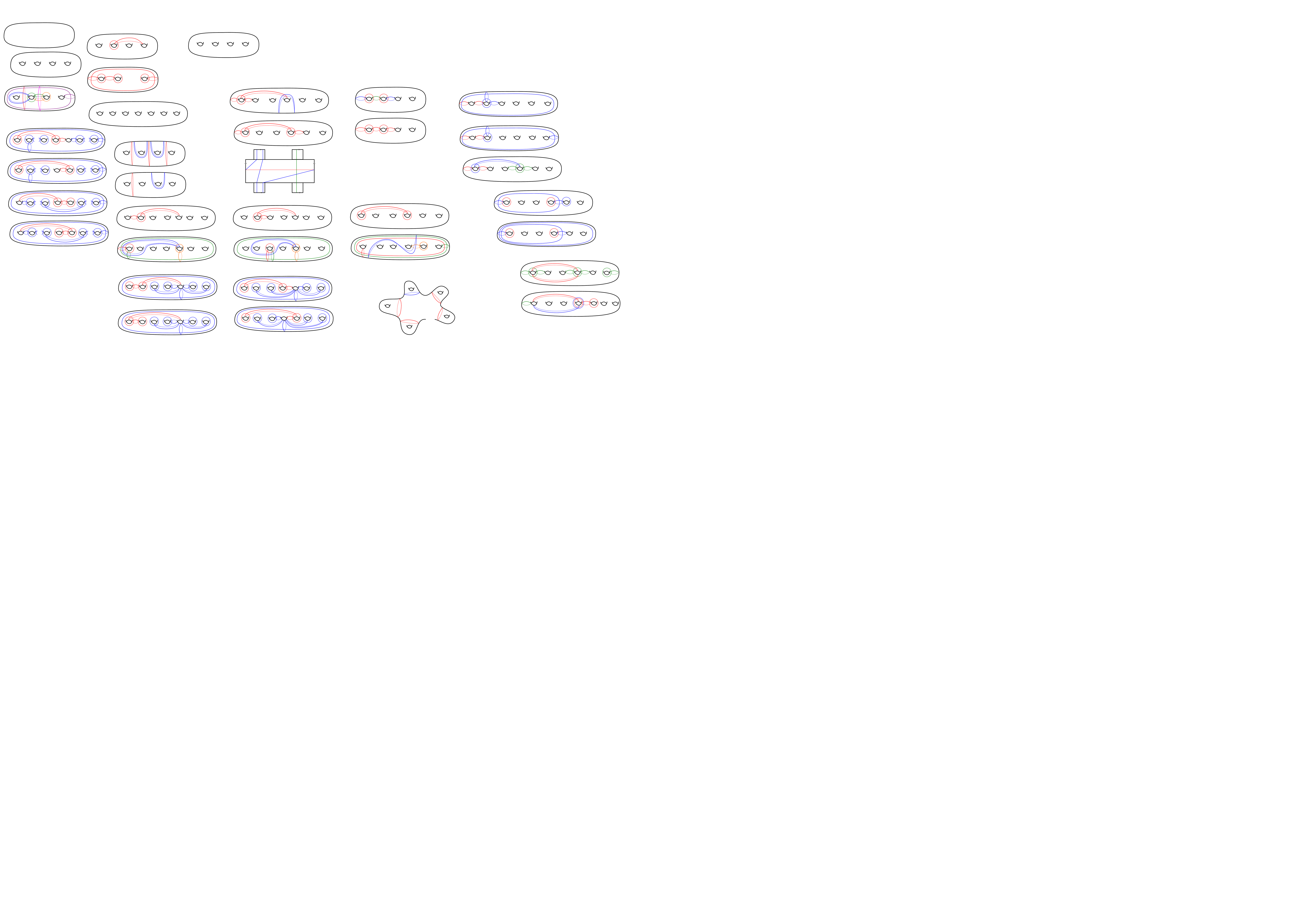}\hspace{.5in}
\includegraphics[scale=.6]{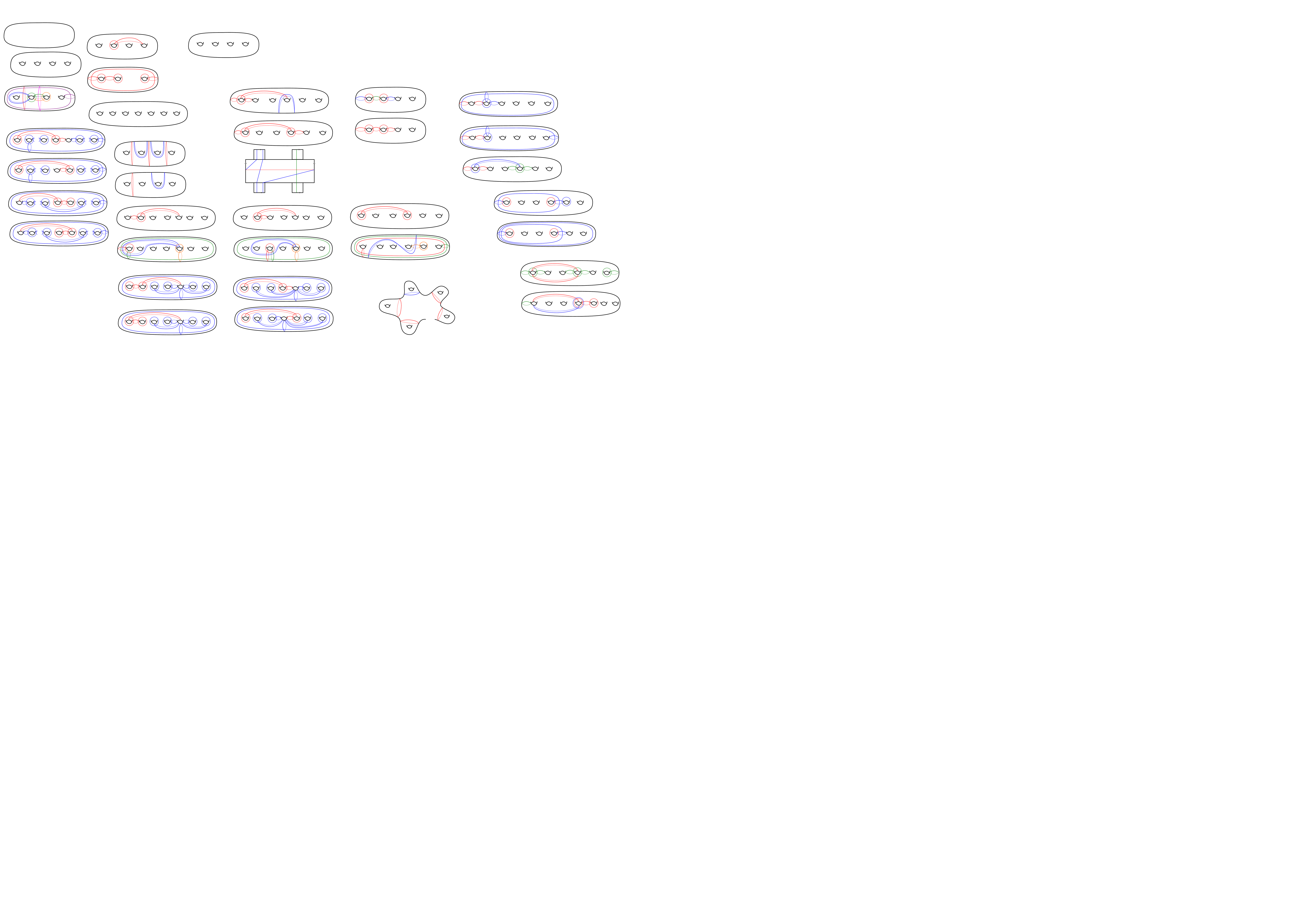}
\caption{}
\label{fig:statement-E}
\end{figure}

In these cases, the genus-2 component of $\Sigma\setminus\alpha$ contains the sharing pair with spine either $(b_J,c_{2k+1},c_{2k+2})$ or $(b_J,c_{2k+1},c_{2k})$, both of which are covered by (D). Therefore $(\dagger)$ holds. Figure \ref{fig:statement-E} also shows a choice of $g-2$ disjoint genus-1 curves $x_1,\ldots,x_{g-2}\in X^s$ in the genus-$(g-2)$ component of $\Sigma\setminus\alpha$. Now $(\ddagger)$ is easily verified.

\boxed{\text{Statement (F)}} We proceed just as in (B) and (D). For case (vi) it suffices to treat $(c_1,b_{[2,2k]}^+,c_{2k+1})$. Here we choose $x=c_{\{0,1\}}$, $y=s_{\{2k+2,2k+3\}}$, $z=v_{\{0,1\}*[2,2k]*\{2k+1\}}^+$, and $w=c_{\{2g,2g+1\}}$. See Figure \ref{fig:statement-F}. Observe that the curves $\alpha,\beta\in X^s$ and $z\in V$ are of the form covered by (E). Using arguments similar to the proof of (D) we can deduce that the curves $\phi(\alpha)$ and $\phi(\beta)$ are in the genus-2 subsurface bounded by $\phi(z)$.

\begin{figure}[h!]
\labellist
\pinlabel $w$ at 1766 2170
\pinlabel $x$ at 1490 2130
\pinlabel $y$ at 1695 2170
\pinlabel $z$ at 1510 2170
\pinlabel $...$ at 1180 2170
\pinlabel $...$ at 1320 2170
\pinlabel $...$ at 1562 2170
\endlabellist
\centering
\includegraphics[scale=.6]{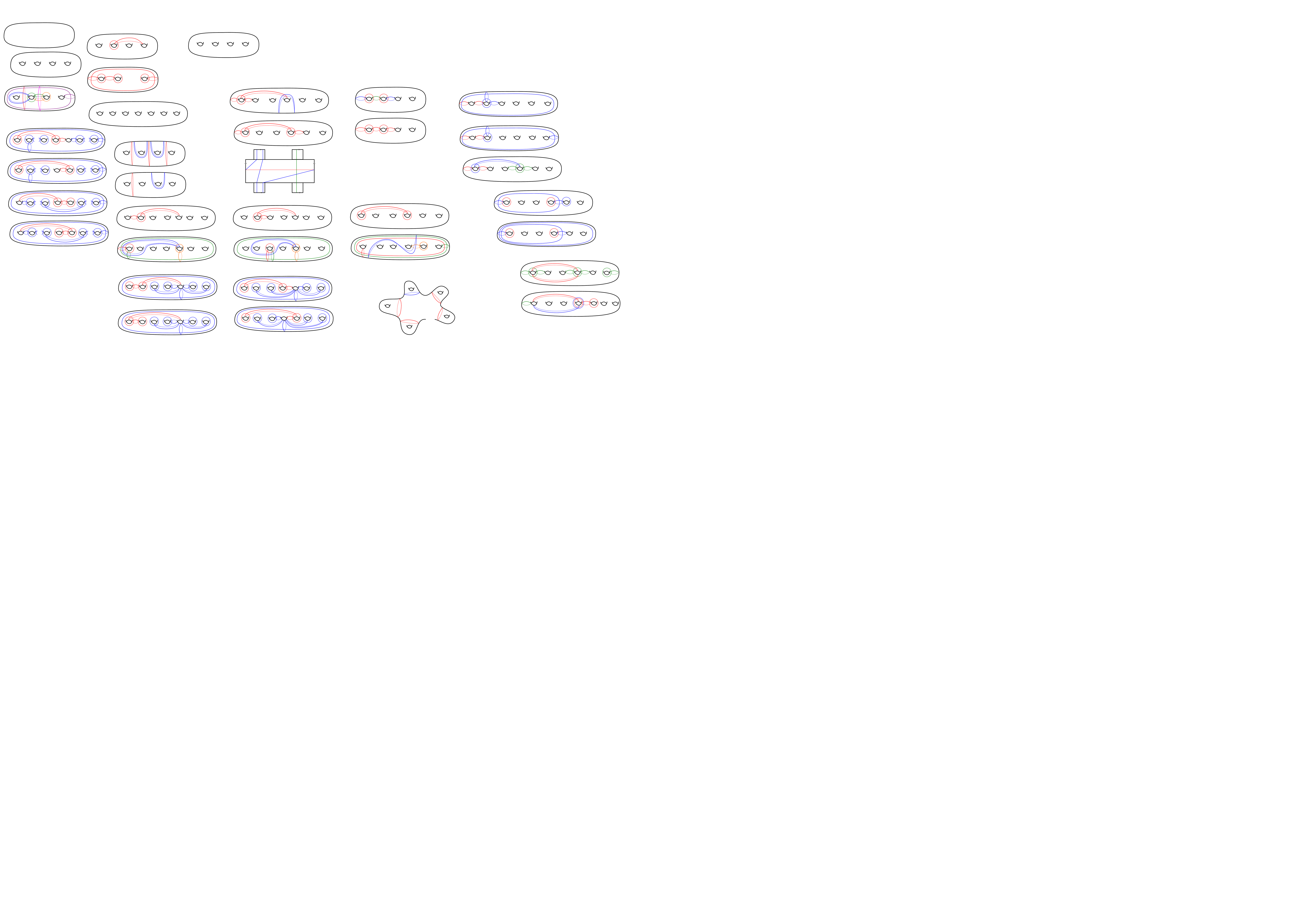}\hspace{.5in}
\includegraphics[scale=.6]{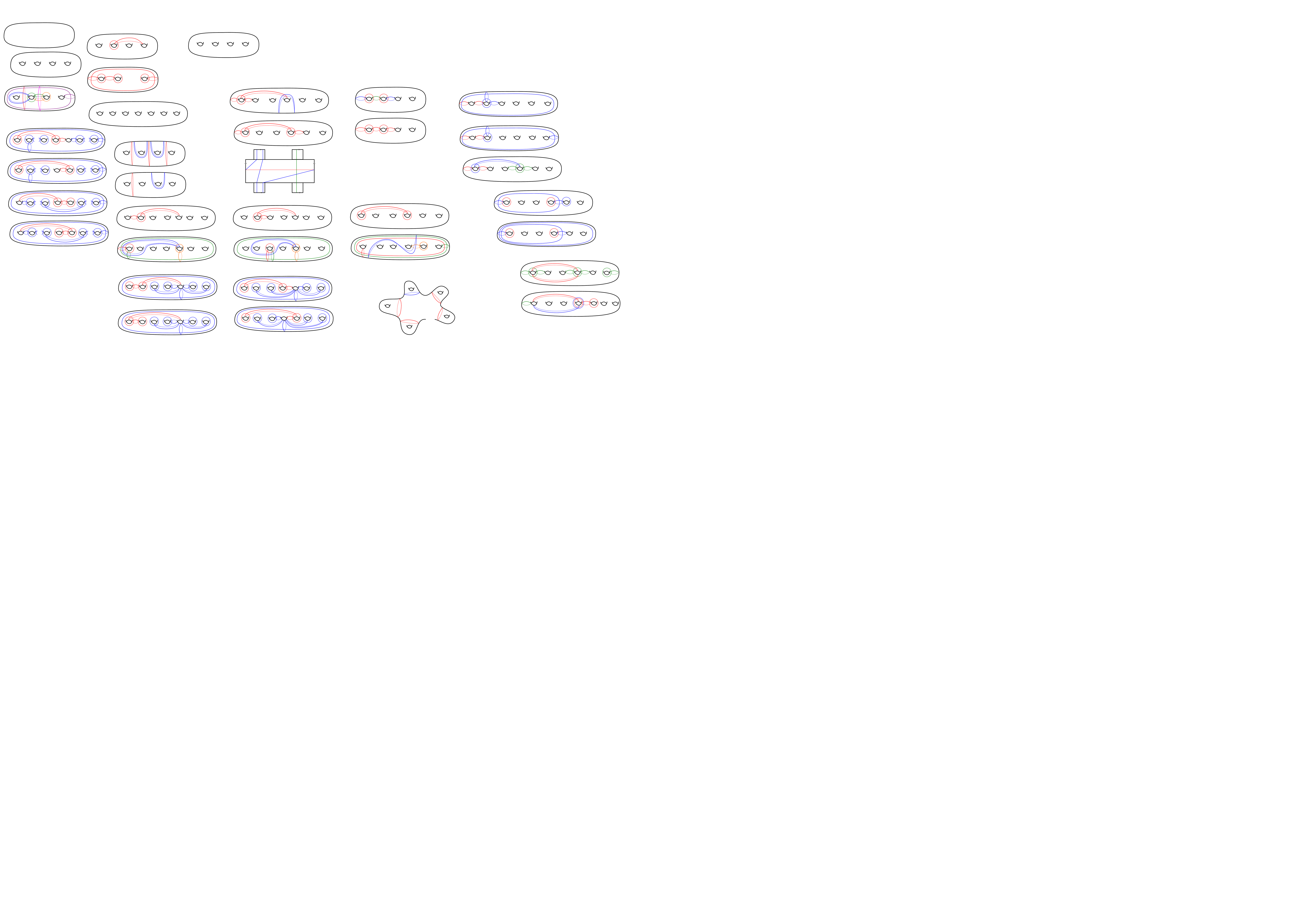}
\caption{}
\label{fig:statement-F}
\end{figure}

\section{Extending the incidence-preserving map} 

Recall that we have a defined subcomplexes $Y^s\subset Y\subset\mathcal C(\Sigma)$ in (\ref{eqn:rigid-set}) and (\ref{eqn:rigid-set-sep}). In this section we prove the last step in the proof of Theorem \ref{thm:main}: 

\begin{thm}\label{thm:extend}
Any incidence-preserving map $\phi:Y^s\rightarrow\mathcal C^s(\Sigma)$ admits a locally injective extension $\widetilde\phi:Y\rightarrow\mathcal C(\Sigma)$. 
\end{thm}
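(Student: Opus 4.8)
The plan is to set $\widetilde\phi=\phi$ on $Y^s$ and to define $\widetilde\phi$ on the nonseparating curves $Y\setminus Y^s=C\cup B$ by means of sharing pairs. For a chain curve $c_i$ the spine $(c_{i-1},c_i,c_{i+1})$ is of type (i) and realizes $(s_{\{i-1,i\}},s_{\{i,i+1\}})$ as a sharing pair lying in $Y^s$, while for a bounding-pair curve $b_{[i,j]}^\pm$ the spine $(c_{i-1},b_{[i,j]}^\pm,c_{j+1})$ is of type (vi) and realizes $(u_{i-1,[i,j]}^\pm,u_{j+1,[i,j]}^\pm)$ in $Y^s$. By Theorem \ref{thm:sharing-pair} the image $(\phi(\alpha),\phi(\beta))$ of either pair is again a sharing pair; the two genus-$1$ subsurfaces bounded by its curves meet in an annulus whose core is a single well-defined isotopy class, and I would define $\widetilde\phi(y)$ to be that core. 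Since a sharing pair determines its shared curve uniquely, $\widetilde\phi$ is well defined, and it extends $\phi$ by construction. Note also that the shared curve of a sharing pair is always nonseparating, so $\widetilde\phi$ sends $C\cup B$ into nonseparating curves and $Y^s$ into separating curves.

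The technical core is Lemma \ref{lem:chains}: $\widetilde\phi$ carries the chain $C$ (and the length-$3$ chains appearing as spines) to chains. For two consecutive chain curves $c_i,c_{i+1}$, the key observation is that both $\widetilde\phi(c_i)$ and $\widetilde\phi(c_{i+1})$ are cores of annuli lying in the one-holed torus $T$ bounded by the genus-$1$ curve $\phi(s_{\{i,i+1\}})$ (genus $1$ by Theorem \ref{thm:genus}), since this curve belongs to both defining sharing pairs. Essential nonseparating curves in a one-holed torus meet in $|ps-qr|$ points, where $p/q,r/s$ are their slopes, so the claim $i(\widetilde\phi(c_i),\widetilde\phi(c_{i+1}))=1$ amounts to showing the two cores are Farey neighbours in $T$; I would extract this from the way $\phi(s_{\{i-1,i\}})$ and $\phi(s_{\{i+1,i+2\}})$ cut $T$. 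For non-adjacent chain curves $c_i,c_k$ with $k\ge i+2$, I would instead choose the defining curves $s_{\{i-1,i\}}$ for $c_i$ and $s_{\{k,k+1\}}$ for $c_k$: these are disjoint (neither $c_{i-1}$ nor $c_i$ is adjacent to $c_k$ or $c_{k+1}$), so $\phi$ sends them to disjoint genus-$1$ separating curves, which bound disjoint one-holed tori — they cannot be nested, since a one-holed torus contains no essential separating curve other than its boundary — and the cores $\widetilde\phi(c_i),\widetilde\phi(c_k)$ inside them are therefore disjoint. This localization also yields the independence of the chosen sharing pair that underlies well-definedness in the strong sense: distinct admissible spines for the same $y$ determine annuli sharing a common core inside a common one-holed torus.

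With Lemma \ref{lem:chains} available I would show that $\widetilde\phi$ is simplicial by verifying that it preserves disjointness on every pair of curves of $Y$. The separating--separating case is immediate because $\widetilde\phi=\phi$ there and $\phi$ is simplicial. Every remaining case involves a curve of $C\cup B$, and I would reduce it to the same containment principle used above: a core $\widetilde\phi(y)$ lies in the one-holed (or, for $B$, genus-$1$) subsurface bounded by one of its defining curves, and disjointness in $Y^s$ of the relevant defining curves — which $\phi$ preserves — confines the images to disjoint subsurfaces. Local injectivity then follows quickly: the separating/nonseparating dichotomy noted above prevents any collision between $\phi(Y^s)$ and $\widetilde\phi(C\cup B)$, the map $\phi$ is already injective on $Y^s$ by Remark \ref{rmk:injective}, and the images of $C$ (and of $B$) are mutually distinct because Lemma \ref{lem:chains} makes them genuine chains; assembling these gives injectivity on the star of every vertex.

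I expect the main obstacle to be Lemma \ref{lem:chains}, and specifically the exact count $i(\widetilde\phi(c_i),\widetilde\phi(c_{i+1}))=1$. Confining the two images to a common one-holed torus is clean, but upgrading ``they intersect'' to ``they are Farey neighbours'' requires a careful analysis of how the certifying curves meet inside that torus, and this same analysis is what powers the independence-of-choice statement behind well-definedness. By contrast, once the chain structure is in place, the simpliciality and local-injectivity steps are a finite, if tedious, bookkeeping of cases.
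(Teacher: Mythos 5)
Your architecture is the same as the paper's (define $\widetilde\phi$ on $C\cup B$ via images of sharing pairs, prove a chain lemma, then deduce simpliciality and injectivity on stars), but two of the steps you describe are not actually proved, and they are precisely the hard ones. First, the count $i(\widetilde\phi(c_i),\widetilde\phi(c_{i+1}))=1$: you flag it as the main obstacle and then leave it at ``I would extract this from the way the certifying curves cut $T$,'' which is not an argument. The paper closes this gap with a specific mechanism: take the length-$5$ chain $(x_0,\ldots,x_4)$ centered on the edge in question, observe that consecutive triples are spines of sharing pairs whose $\phi$-images are again sharing pairs, so that the union of the two relevant one-holed tori is an annulus $A$ around $\widetilde\phi(x_2)$ with two rectangles attached; if $i(\widetilde\phi(x_1),\widetilde\phi(x_2))>1$, then $\widetilde\phi(x_1)\cap A$ separates the two attaching intervals of the rectangle carrying $\widetilde\phi(x_3)$, forcing $i(\widetilde\phi(x_1),\widetilde\phi(x_3))\neq0$ and contradicting the disjointness already established in part (i). Some argument of this kind is indispensable; the Farey-neighbour framing alone supplies nothing. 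Second, your well-definedness claim is circular: two sharing pairs $(\phi(\alpha),\phi(\beta))$ and $(\phi(\alpha),\phi(\beta'))$ with the common member $\phi(\alpha)$ both have their shared curves inside the one-holed torus bounded by $\phi(\alpha)$, but nothing yet forces those two cores to coincide. The paper proves coincidence via a move lemma (Lemma \ref{lem:well-defined}, after Brendle--Margalit), whose hypothesis requires exhibiting an auxiliary $\gamma\in Y^s$ meeting $z'$ but none of $x,y,z$ --- a nontrivial condition to verify inside a finite complex.

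Your ``containment principle'' for simpliciality also fails in two concrete cases that the paper must (and does) treat separately. If $\alpha=b^+_{[i,j]}$ and $\beta=b^-_{[i,j]}$ form a bounding pair, then every defining curve of $\alpha$ (namely $u^+_{i-1,[i,j]}$, $u^+_{j+1,[i,j]}$) intersects every defining curve of $\beta$, because $c_{i-1}$ and $c_{j+1}$ each meet both $b^+_{[i,j]}$ and $b^-_{[i,j]}$; no pair of disjoint one-holed tori is available, which is exactly why the paper excludes bounding pairs from Lemma \ref{lem:chains}(i) and handles them in Case 3 of Proposition \ref{prop:distance1} by cutting $\Sigma$ along the images of the even chain curves and then ruling out $\widetilde\phi(\alpha)=\widetilde\phi(\beta)$ with an auxiliary intersecting pair $\gamma,\delta\in Y^s$. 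Likewise, for a separating $\alpha\in Y^s$ disjoint from $c_i$, the curve $\alpha$ may intersect \emph{both} curves of your fixed spine $(c_{i-1},c_i,c_{i+1})$ (the paper enumerates exactly which elements of $Y^s$ meet both $c_0$ and $c_2$ while missing $c_1$); one must then pass to a different spine for $c_i$, which is where the well-definedness lemma and the limited supply of curves in a finite complex genuinely bite. These omissions are not bookkeeping: they are the points the authors single out as the loss of flexibility when the domain is finite.
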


\subsection{Defining the extension $\widetilde\phi$}

Fix an incidence-preserving map $\phi:Y^s\rightarrow\mathcal C^s(\Sigma)$. Since $Y=Y^s\cup C\cup B$, we want to define $\widetilde\phi(y)$ for $y\in C\cup B$. 

For $y\in B$, there is a unique sharing pair $\alpha,\beta$ with spine $(x,y,z)$ with $x,z\in C$. By Theorem \ref{thm:sharing-pair}, $\phi(\alpha),\phi(\beta)$ is a sharing pair, so we define $\widetilde\phi(y)$ to be the curve shared by $\phi(\alpha),\phi(\beta)$.

For $y\in C$, choose a sharing pair $\alpha,\beta$ with spine $(x,y,z)$ such that $x,z\in C\cup B$ and at most one of $x,z$ is in $B$. Again $\phi(\alpha),\phi(\beta)$ is also a sharing pair by Theorem \ref{thm:sharing-pair}, and we define $\widetilde\phi(y)$ to be the curve shared by $\phi(\alpha),\phi(\beta)$. 

When $y\in C$, there are multiple choices for the spine $(x,y,z)$ and hence for $\alpha,\beta$. As such, we need to show that $\widetilde\phi$ is well-defined, independent of this choice. For this, we follow the approach from \cite[\S4.2]{BM}. Two spines $(x,y,z)$ and $(x,y,z')$ are said to \emph{differ by a move} if $(z,y,z')$ is also a spine. The proof of the following lemma is exactly the same as \cite[Lem.\ 4.4]{BM}. 

\begin{lem}\label{lem:well-defined}
Let $\phi:Y^s\rightarrow\mathcal C^s(\Sigma)$ be an incidence-preserving map, and fix sharing pairs $(\alpha,\beta)$ and $(\alpha,\beta')$ with spines $(x,y,z)$ and $(x,y,z')$ that differ by a move. Assume that $\alpha,\beta,\beta',x,y,z,z'$ are curves in $Y$. Assume that (i) there exists $\gamma\in Y^s$ such that $\gamma$ intersects $z'$ but intersects none of $x,y,z$, and (ii) each of $(\phi(\alpha),\phi(\beta))$, $(\phi(\alpha),\phi(\beta'))$, and $(\phi(\beta),\phi(\beta'))$ are sharing pairs. Then $(\phi(\alpha),\phi(\beta))$ and $(\phi(\alpha),\phi(\beta'))$ share the same curve. 
\end{lem}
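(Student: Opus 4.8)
The plan is to reproduce the argument of \cite[Lem.~4.4]{BM}. Write $p$, $q$, $s$ for the curves shared by the sharing pairs $(\phi(\alpha),\phi(\beta))$, $(\phi(\alpha),\phi(\beta'))$, and $(\phi(\beta),\phi(\beta'))$, respectively; these exist by hypothesis (ii), and the goal is to show $p=q$. For a genus-$1$ separating curve $\delta$, let $\Sigma_\delta$ denote the genus-$1$ subsurface it bounds; thus $p$ is the core of the annulus $\Sigma_{\phi(\alpha)}\cap\Sigma_{\phi(\beta)}$, while $q$ and $s$ are the cores of $\Sigma_{\phi(\alpha)}\cap\Sigma_{\phi(\beta')}$ and $\Sigma_{\phi(\beta)}\cap\Sigma_{\phi(\beta')}$.

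First I would record the intersection data of $\phi(\gamma)$. Since $\gamma$ is disjoint from $x,y,z$, it is disjoint from $\alpha=\partial N(x\cup y)$ and from $\beta=\partial N(z\cup y)$; and since $\gamma$ meets $z'$ but not $y$, it cannot be contained in the once-holed torus $N(z'\cup y)$ (an essential separating curve there would be boundary-parallel), so $\gamma$ must cross $\beta'=\partial N(z'\cup y)$. As $\phi$ is incidence-preserving, $\phi(\gamma)$ is disjoint from $\phi(\alpha)$ and $\phi(\beta)$ and meets $\phi(\beta')$. Moreover $\phi(\gamma)$ is disjoint from the \emph{subsurfaces} $\Sigma_{\phi(\alpha)}$ and $\Sigma_{\phi(\beta)}$: being separating and, by Remark~\ref{rmk:injective}, distinct from $\phi(\alpha)$ and $\phi(\beta)$, it is not the boundary-parallel curve that is the only essential separating curve inside a once-holed torus, so it lies outside each. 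In particular $\phi(\gamma)$ is disjoint from $q\subset\Sigma_{\phi(\alpha)}$ and from $s\subset\Sigma_{\phi(\beta)}$.

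The key step is to deduce $q=s$. Both curves lie in the once-holed torus $\Sigma_{\phi(\beta')}$, and $\phi(\gamma)$ crosses $\phi(\beta')=\partial\Sigma_{\phi(\beta')}$ essentially while being disjoint from $q$ and $s$. If $q\neq s$, then $q$ and $s$ are non-isotopic essential curves in $\Sigma_{\phi(\beta')}$ and hence fill it, in the sense that the complement of $q\cup s$ is a union of disks together with a single boundary-parallel annulus; consequently every essential arc with endpoints on $\phi(\beta')$ must meet $q\cup s$. This contradicts the existence of the essential arcs of $\phi(\gamma)\cap\Sigma_{\phi(\beta')}$, which avoid $q\cup s$. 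Therefore $q=s$. This filling argument in the once-holed torus, together with arranging the correct intersection data for $\phi(\gamma)$, is the main obstacle; the rest is bookkeeping with sharing-pair subsurfaces.

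Finally I would conclude $p=q$. Since $q=s$, this common isotopy class is disjoint from $\phi(\alpha)$ (as $q\subset\Sigma_{\phi(\alpha)}$) and from $\phi(\beta)$ (as $s\subset\Sigma_{\phi(\beta)}$), so it lies in a single component of $\Sigma\setminus(\phi(\alpha)\cup\phi(\beta))$. Being isotopic into both $\Sigma_{\phi(\alpha)}$ and $\Sigma_{\phi(\beta)}$, and being nonseparating (hence isotopic to neither $\phi(\alpha)$ nor $\phi(\beta)$), it must lie in $\Sigma_{\phi(\alpha)}\cap\Sigma_{\phi(\beta)}$, which is an annulus with core $p$. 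Hence $q=p$, which is what we wanted.
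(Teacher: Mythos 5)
Your argument is correct and follows the same route as the paper, which simply defers to \cite[Lem.~4.4]{BM}: using $\phi(\gamma)$ to force the curves shared by $(\phi(\alpha),\phi(\beta'))$ and $(\phi(\beta),\phi(\beta'))$ to coincide via a filling argument in the once-holed torus bounded by $\phi(\beta')$, and then locating that common curve in the annulus $\Sigma_{\phi(\alpha)}\cap\Sigma_{\phi(\beta)}$. The supporting facts you use (an essential separating curve of $\Sigma$ inside a once-holed torus is boundary-parallel; two non-isotopic nonseparating curves in a once-holed torus leave only a boundary-parallel annulus adjacent to the boundary) are all correctly deployed.
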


Now we can show $\widetilde\phi$ is well defined on $C$. First note that any two spines used in the definition of $\widetilde\phi(y)$ for $y\in C$ differ by at most two moves (they all differ by a single move from the (unique) spine consisting only of chain curves). It is easy to check that condition (i) in Lemma \ref{lem:well-defined} holds; see e.g.\ Figure \ref{fig:well-defined}. 

\begin{figure}[h!]
\labellist
\pinlabel $x$ at 700 2530
\pinlabel $y$ at 740 2507
\pinlabel $z$ at 765 2518
\pinlabel $z'$ at 870 2555
\pinlabel $\gamma$ at 850 2500
\pinlabel $\cdots$ at 810 2530
\pinlabel $\cdots$ at 955 2530
\endlabellist
\centering
\includegraphics[scale=.8]{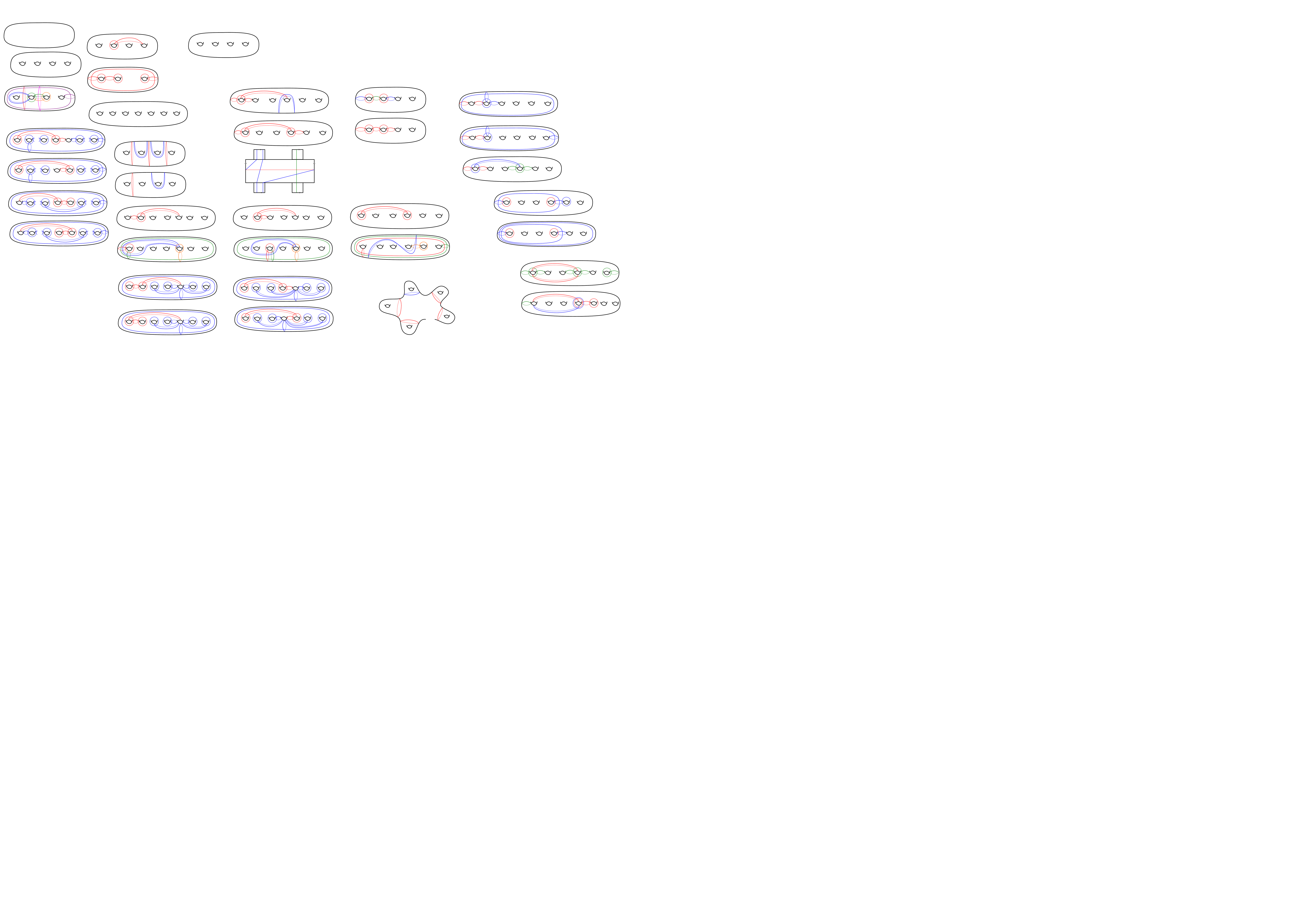}
\caption{Proving that $\widetilde\phi$ is well-defined.}
\label{fig:well-defined}
\end{figure}


Finally, condition (ii) in Lemma \ref{lem:well-defined} holds by Theorem \ref{thm:sharing-pair}. Therefore, $\widetilde\phi$ is well defined. 

\begin{rmk}\label{rmk:simplicial}
Having defined $\widetilde\phi$ on the vertices of $Y$, to show that $\widetilde\phi$ extends simplicially to the subcomplex spanned by these vertices (which we are also denoting by $Y$), it suffices to show that if $\alpha,\beta\in Y$ are disjoint, then $\widetilde\phi(\alpha),\widetilde\phi(\beta)$ are also disjoint. This is proved in Proposition \ref{prop:distance1} below. 
\end{rmk}

\subsection{The extension $\widetilde\phi$ is locally-injective} 

This section contains the final step in the proof of Theorem \ref{thm:main}. We need to show that $\widetilde\phi$ is simplicial (c.f.\ Remark \ref{rmk:simplicial}) and that $\widetilde\phi$ is injective on the star of each vertex $v\in Y$. These statements follow from Propositions \ref{prop:distance1} and \ref{prop:distance2} below (which in fact show that $\widetilde\phi$ is injective on $Y$). 

\begin{prop}\label{prop:distance1}
If $\alpha\neq \beta$ are vertices in $Y$ and $i(\alpha,\beta)=0$, then $\widetilde\phi(\alpha)\neq\widetilde\phi(\beta)$ and $i(\widetilde\phi(\alpha),\widetilde\phi(\beta))=0$. 
\end{prop}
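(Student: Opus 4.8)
The plan is to establish both conclusions — disjointness and distinctness of the images — by reducing everything to a single geometric principle about shared curves. Recall that $\widetilde\phi$ agrees with $\phi$ on the separating curves $Y^s$, while for a nonseparating $y\in C\cup B$ the value $\widetilde\phi(y)$ is, by construction, the curve shared by a sharing pair $(\phi(a_1),\phi(a_2))$, where $(a_1,a_2)\subset Y^s$ is a sharing pair for $y$ of one of the types in Theorem \ref{thm:sharing-pair}. In particular $\widetilde\phi(y)$ is nonseparating and lies in the interior of the genus-$1$ subsurface $\Sigma_{\phi(a_i)}$ bounded by $\phi(a_i)$ (indeed in the annulus $\Sigma_{\phi(a_1)}\cap\Sigma_{\phi(a_2)}$). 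I would first record two elementary facts: (1) if $\gamma$ is a separating curve of $\Sigma$ disjoint from a genus-$1$ separating curve $\delta$ and not isotopic to $\delta$, then $\gamma$ lies in the complement of the genus-$1$ subsurface bounded by $\delta$ (a once-holed torus contains no separating curve of $\Sigma$ other than its boundary); and (2) two distinct disjoint genus-$1$ separating curves bound genus-$1$ subsurfaces with disjoint interiors, since neither genus-$1$ piece can contain the other.

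I would then split into three cases. \emph{Case 1 (both in $Y^s$).} Here $\widetilde\phi(\alpha)=\phi(\alpha)$ and $\widetilde\phi(\beta)=\phi(\beta)$; disjointness is immediate since $\phi$ is simplicial, and distinctness holds because $\phi$ is injective on $Y^s$ (Remark \ref{rmk:injective}). \emph{Case 2 ($\alpha\in Y^s$ separating, $\beta\in C\cup B$ nonseparating).} Since $\widetilde\phi(\alpha)$ is separating and $\widetilde\phi(\beta)$ is nonseparating, distinctness is automatic, and only disjointness must be shown. The generic mechanism is to choose a sharing pair $(b_1,b_2)\subset Y^s$ for $\beta$ (of allowed type) with $\alpha$ disjoint from and unequal to both $b_1,b_2$; then $\phi(\alpha)$ is disjoint from and unequal to the genus-$1$ curve $\phi(b_1)$ (by simpliciality and injectivity of $\phi$, and Theorem \ref{thm:genus}), so by fact (1) it lies off $\Sigma_{\phi(b_1)}$ and hence is disjoint from $\widetilde\phi(\beta)\subset\operatorname{int}\Sigma_{\phi(b_1)}$.

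This generic mechanism fails exactly when $\beta$ lies on a genus-$1$ side of $\alpha$, i.e.\ when $\alpha$ is a genus-$1$ curve whose subsurface contains $\beta$; then no sharing pair for $\beta$ can avoid $\alpha$. I would handle this by the opposite tactic: arrange that $\alpha$ is \emph{itself} one of the two genus-$1$ curves of the sharing pair for $\beta$ (e.g.\ for $\alpha=s_{\{i,i+1\}}$ and $\beta=c_i$, use the type-(i) spine $(c_{i-1},c_i,c_{i+1})$, with sharing pair $(s_{\{i-1,i\}},s_{\{i,i+1\}})$). Then $\widetilde\phi(\beta)$ lies in $\operatorname{int}\Sigma_{\phi(\alpha)}$ and is therefore disjoint from the boundary $\phi(\alpha)=\widetilde\phi(\alpha)$. \emph{Case 3 (both in $C\cup B$).} Using that $\alpha,\beta$ are disjoint, I would choose sharing pairs $(a_1,a_2)$ and $(b_1,b_2)$ in $Y^s$ (of allowed types) with inner genus-$1$ curves $a_1,b_1$ disjoint and distinct; then $\phi(a_1),\phi(b_1)$ are distinct disjoint genus-$1$ curves, so by fact (2) their subsurfaces have disjoint interiors, whence $\widetilde\phi(\alpha)\subset\operatorname{int}\Sigma_{\phi(a_1)}$ and $\widetilde\phi(\beta)\subset\operatorname{int}\Sigma_{\phi(b_1)}$ are disjoint and distinct. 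For example, for $\alpha=c_i,\beta=c_{i+2}$ one takes the type-(i) pairs with $a_1=s_{\{i-1,i\}}$ and $b_1=s_{\{i+2,i+3\}}$, which are disjoint.

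The main obstacle is the ingredient common to Cases 2 and 3: verifying, for \emph{every} disjoint pair of vertices of $Y$, that sharing pairs with the required disjointness and incidence properties really exist among the allowed types of Theorem \ref{thm:sharing-pair}, with all auxiliary curves lying in the finite complex $Y^s$. Such configurations plainly exist in $\Sigma$ (disjoint curves admit disjoint genus-$1$ neighborhoods, and a curve on a genus-$1$ side forces the ``$\alpha$-in-the-pair'' tactic), but the requirement that everything live in $Y$ converts this into a finite, somewhat intricate case check. I expect the genus-$1$-handle subcase of Case 2 to be the most delicate point, since there the naive ``avoid $\alpha$'' strategy is impossible and one must instead exhibit $\alpha$ as a member of a sharing pair for $\beta$; the dihedral and hyperelliptic symmetries of $Y$ from Lemma \ref{lem:permute} and Remark \ref{rmk:permute} should reduce the number of configurations to a manageable list.
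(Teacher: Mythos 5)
Your overall architecture (three cases according to which of the two curves are separating, torus-containment via sharing pairs) matches the paper's, and Case 1 and the generic mechanisms in Cases 2 and 3 are sound. But there are two places where the proposed mechanism does not merely become an ``intricate case check'' --- it actually cannot work, and a different idea is needed.

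The most serious gap is the bounding-pair subcase of Case 3, which you do not mention. If $\alpha=b^+_J$ and $\beta=b^-_J$, your plan is to find sharing pairs whose inner genus-$1$ curves are disjoint, so that $\widetilde\phi(\alpha)$ and $\widetilde\phi(\beta)$ land in disjoint one-holed tori. No such configuration exists, in $Y^s$ or even in all of $\Sigma$: since $[b^+_J]=[b^-_J]$ in $H_1(\Sigma)$, any simple closed curve meeting $b^+_J$ exactly once has nonzero algebraic intersection with $b^-_J$ as well, so every one-holed torus containing $b^+_J$ as a nonseparating curve meets $b^-_J$, hence meets every one-holed torus containing $b^-_J$. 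The paper handles this case by an entirely different argument: it uses Lemma \ref{lem:chains} to show that $\widetilde\phi$ carries the chain $(c_0,\dots,c_{2g})$ to a chain, locates $\widetilde\phi(b^\pm_J)$ on the genus-$0$ complementary pieces of the images of the even chain curves to get disjointness, and then needs a separate argument (with the auxiliary curves $\gamma,\delta$ of Figure \ref{fig:prop-disjoint3}) to rule out $\widetilde\phi(b^+_J)=\widetilde\phi(b^-_J)$. Nothing in your proposal produces either conclusion for this pair.

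Secondly, in Case 2 your claim that the generic mechanism ``fails exactly when $\beta$ lies on a genus-$1$ side of $\alpha$'' is not correct, and your fallback (make the separating curve itself a member of the sharing pair) only repairs that one failure mode. When the nonseparating curve is $b^\pm_{[i,j]}\in B$, there is a \emph{unique} sharing pair for it in $Y^s$, with spine $(c_{i-1},b^\pm_{[i,j]},c_{j+1})$, and there exist genus-$2$ curves $\beta\in V$ disjoint from $b^\pm_{[i,j]}$ that intersect both $c_{i-1}$ and $c_{j+1}$ and hence both members of that pair (the curves in Figure \ref{fig:prop-disjoint2}). Here no alternative spine exists, and a genus-$2$ curve can never be a member of a sharing pair, so both of your tactics are unavailable; the paper again falls back on Lemma \ref{lem:chains} together with Theorem \ref{thm:genus} to place $\widetilde\phi(b^\pm_{[i,j]})$ in the genus-$(g-2)$ component of the complement of $\phi(\beta)$. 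In short, the chain lemma (Lemma \ref{lem:chains}) is not an optional convenience but an essential ingredient for exactly the configurations your reduction cannot reach.
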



\begin{prop}\label{prop:distance2} 
If $\alpha,\beta$ are vertices in $Y$ and $i(\alpha,\beta)\neq0$, then $\widetilde\phi(\alpha)\neq\widetilde\phi(\beta)$. 
\end{prop}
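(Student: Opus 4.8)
The plan is to show that distinct intersecting curves in $Y$ have distinct images under $\widetilde\phi$, thereby completing the verification that $\widetilde\phi$ is locally injective (combined with Proposition \ref{prop:distance1}, which handles the disjoint case). The key observation is that $\widetilde\phi$ restricts to $\phi$ on $Y^s$ and to a well-defined extension on the nonseparating curves $C\cup B$, and that we already know a great deal about how $\widetilde\phi$ interacts with intersection and genus data via Theorems \ref{thm:genus}, \ref{thm:sharing-pair} and the sharing-pair machinery. I would split the argument according to the types of $\alpha$ and $\beta$ (each lying in $Y^s=S\cup U\cup V$ or in $C\cup B$), reducing to a manageable number of cases using the dihedral and hyperelliptic symmetries of $Y$ (Remark \ref{rmk:permute}).

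First I would dispose of the case where both $\alpha,\beta\in Y^s$. Here $\widetilde\phi=\phi$ on $Y^s$, and by Remark \ref{rmk:injective} the map $\phi$ is injective (it satisfies the hypothesis of Lemma \ref{lem:injective-criterion}), so $\widetilde\phi(\alpha)\neq\widetilde\phi(\beta)$ immediately. Next I would treat the case where at least one curve, say $\alpha$, is nonseparating, i.e.\ $\alpha\in C\cup B$. The main tool is that $\widetilde\phi(\alpha)$ is \emph{defined} as the curve shared by a sharing pair $\phi(\alpha_1),\phi(\alpha_2)$, where $(\alpha_1,\alpha_2)$ is a sharing pair for $\alpha$ with spine $(x,\alpha,z)$; in particular $\widetilde\phi(\alpha)$ is a nonseparating curve, since it is shared by a genuine sharing pair (Theorem \ref{thm:sharing-pair}). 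This already separates the nonseparating images from the separating images: if $\alpha\in C\cup B$ is nonseparating and $\beta\in Y^s$ is separating, then $\widetilde\phi(\alpha)$ is nonseparating while $\widetilde\phi(\beta)=\phi(\beta)$ is separating, so they cannot be equal.

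The substantive case is therefore $\alpha,\beta\in C\cup B$ with $i(\alpha,\beta)\neq0$. Here I would argue that if $\widetilde\phi(\alpha)=\widetilde\phi(\beta)$, then this common curve $y_0$ is shared by two sharing pairs coming from spines built on $\alpha$ and on $\beta$ respectively; I would then produce a separating curve $\delta\in Y^s$ that detects the difference between $\alpha$ and $\beta$, in the sense that $\delta$ is disjoint from one of $\alpha,\beta$ but intersects the other, giving (since $\phi$ is incidence-preserving on $Y^s$ and $\widetilde\phi$ is simplicial by Proposition \ref{prop:distance1}) that $\phi(\delta)=\widetilde\phi(\delta)$ is disjoint from $\widetilde\phi$ of one but not the other — contradicting $\widetilde\phi(\alpha)=\widetilde\phi(\beta)$. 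Concretely, for the chain curves $C=\{c_0,\ldots,c_{2g+1}\}$ one can use the genus-1 and genus-2 curves of $U$ and $V$ that intersect a single $c_i$ and are disjoint from its neighbors, and the symmetries of Lemma \ref{lem:permute} reduce the number of adjacency patterns to check. The main obstacle I anticipate is precisely the construction of such a ``separating witness'' $\delta\in Y^s$ for every intersecting pair $\alpha,\beta\in C\cup B$: because $Y^s$ is finite, the flexibility emphasized in the Comparison-with-\cite{BM} remark is unavailable, so one must exhibit the witnesses explicitly rather than appeal to a general existence statement. Once the witnesses are in hand, the contradiction is routine given that $\widetilde\phi$ is already known to be simplicial (Proposition \ref{prop:distance1}) and genus- and sharing-pair-preserving (Theorems \ref{thm:genus}, \ref{thm:sharing-pair}).
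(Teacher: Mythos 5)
Your case decomposition matches the paper's (both separating; mixed; both nonseparating), and the first two cases are handled correctly: for $\alpha,\beta\in Y^s$ injectivity of $\phi$ (or just incidence preservation) suffices, and in the mixed case the images are distinguished because one is separating and the other, being shared by a genuine sharing pair, is nonseparating.

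The gap is in the substantive case $\alpha,\beta\in C\cup B$. Your plan is to find a separating witness $\delta\in Y^s$ with $i(\delta,\alpha)=0$ and $i(\delta,\beta)\neq 0$, and to conclude that $\widetilde\phi(\delta)$ is disjoint from $\widetilde\phi(\alpha)$ \emph{but not} from $\widetilde\phi(\beta)$. The first half follows from Proposition \ref{prop:distance1} (simpliciality), but the second half is an incidence-preservation statement for $\widetilde\phi$ on a \emph{mixed} pair (one curve in $Y^s$, one in $C\cup B$), and none of the results you cite provide it: incidence preservation of $\phi$ only applies to pairs inside $Y^s$, and simpliciality only sends disjoint pairs to disjoint pairs. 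Establishing ``$\delta$ intersects $\beta$ $\Rightarrow$ $\phi(\delta)$ intersects $\widetilde\phi(\beta)$'' requires locating $\widetilde\phi(\beta)$ inside specific subsurfaces cut out by images of sharing pairs, which is essentially the content of Lemma \ref{lem:chains}. The paper avoids the witness strategy entirely and argues directly from that lemma: if at least one of $\alpha,\beta$ lies in $C$, Lemma \ref{lem:chains}(ii) gives $i(\widetilde\phi(\alpha),\widetilde\phi(\beta))=1$, so the images are distinct; if both lie in $B$ and intersect, they cannot form a bounding pair, hence they meet the chain $C$ in different patterns, and since Lemma \ref{lem:chains} shows $\widetilde\phi$ preserves the intersection pattern with $\widetilde\phi(C)$, the images differ. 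Note also that Lemma \ref{lem:chains}(ii) assumes at most one of the two curves is in $B$, so the two-curves-in-$B$ subcase genuinely needs this separate pattern argument; your proposal does not distinguish it. To repair your write-up, replace the witness step with an appeal to Lemma \ref{lem:chains} (or prove the mixed-pair incidence preservation you are implicitly using, which would amount to redoing Case 2 of Proposition \ref{prop:distance1} with the roles of the conclusions changed).
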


The proofs of Propositions \ref{prop:distance1} and \ref{prop:distance2} rely on the following lemma.

\begin{lem}[Chains under $\widetilde\phi$]\label{lem:chains}
Fix $x_1, x_2\in X\setminus X^s\equiv C\cup B$. 
\begin{enumerate}
\item[(i)] Assume that $x_1$ and $x_2$ do not form a bounding pair. If $x_1$ and $x_2$ are distinct and disjoint, then $\widetilde\phi(x_1)$ and $\widetilde\phi(x_2)$ are distinct and disjoint. 
\item[(ii)] Assume that at most one of the $x_1,x_2$ belongs to $B$. Then if $i(x_1,x_2)=1$, then $i(\widetilde\phi(x_1),\widetilde\phi(x_2))=1$. 
\end{enumerate} 
\end{lem}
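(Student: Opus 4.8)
The plan is to exploit the single geometric fact underlying the definition of $\widetilde\phi$: if $(\gamma_1,\gamma_2)$ is a sharing pair, then its shared curve lies in the annulus $\Sigma_1\cap\Sigma_2$, and in particular \emph{inside} the genus-$1$ subsurface bounded by $\gamma_1$. Write $T_\gamma$ for the (one-holed torus) genus-$1$ subsurface bounded by a genus-$1$ separating curve $\gamma$. The second elementary fact I will use repeatedly is that two \emph{distinct, disjoint} genus-$1$ separating curves $\gamma,\gamma'$ bound \emph{disjoint} subsurfaces $T_\gamma, T_{\gamma'}$: since $\gamma'$ is disjoint from $\gamma$ it lies in $T_\gamma$ or its complement, and it cannot lie essentially inside the one-holed torus $T_\gamma$ (the only separating curves there are boundary-parallel), so $T_{\gamma'}$ sits in the genus-$(g-1)$ complement of $T_\gamma$. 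Throughout I use that $\phi$ preserves genus (Theorem \ref{thm:genus}), preserves and reflects disjointness (incidence-preservation), and is injective (Remark \ref{rmk:injective}).

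For part (i), fix distinct, disjoint $x_1,x_2\in C\cup B$ not forming a bounding pair. The plan is to produce genus-$1$ separating curves $\alpha_1,\alpha_2\in Y^s$ such that $\alpha_i$ is a member of a sharing pair for $x_i$ coming from a chain spine in $X$, and such that $\alpha_1,\alpha_2$ are distinct and disjoint. Each such $\alpha_i$ lies in $S\cup U\subset Y^s$ (it is either $s_{\{\cdot,\cdot\}}$ or a $u$-curve), so by Theorem \ref{thm:genus} and injectivity $\phi(\alpha_1),\phi(\alpha_2)$ are distinct, disjoint, genus-$1$ separating curves; hence $T_{\phi(\alpha_1)},T_{\phi(\alpha_2)}$ are disjoint. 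Since $\widetilde\phi(x_i)$ is the curve shared by $\phi(\alpha_i)$ and its sharing-pair partner, we have $\widetilde\phi(x_i)\subset T_{\phi(\alpha_i)}$, so $\widetilde\phi(x_1),\widetilde\phi(x_2)$ lie in disjoint subsurfaces and are therefore distinct and disjoint. The remaining content is a short case analysis, organized by whether $x_1,x_2$ lie in $C$ or $B$ and reduced using the symmetries of Lemma \ref{lem:permute}, producing such $\alpha_1,\alpha_2$; the freedom in the choice of spine (and the well-definedness granted by Lemma \ref{lem:well-defined}) gives the needed flexibility, and the hypothesis that $x_1,x_2$ do not form a bounding pair is exactly what guarantees they can be separated by disjoint genus-$1$ curves.

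For part (ii), fix $x_1,x_2$ with $i(x_1,x_2)=1$ and at most one of them in $B$, and set $s=\partial N(x_1\cup x_2)$, a genus-$1$ separating curve lying in $S$ or $U$, hence in $Y^s$. The spine $(x_2,x_1,z_1)$ presents a sharing pair for $x_1$ of the form $(s,\partial N(x_1\cup z_1))$, so $\widetilde\phi(x_1)\subset T_{\phi(s)}$; symmetrically, the spine $(x_1,x_2,z_2)$ gives $\widetilde\phi(x_2)\subset T_{\phi(s)}$. By Theorem \ref{thm:genus}, $T_{\phi(s)}$ is a one-holed torus, and $\widetilde\phi(x_1),\widetilde\phi(x_2)$ are essential (indeed nonseparating) and distinct by part (i)-type reasoning, so they intersect at least once; this gives the lower bound $i(\widetilde\phi(x_1),\widetilde\phi(x_2))\ge 1$ cleanly.

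The upper bound $i(\widetilde\phi(x_1),\widetilde\phi(x_2))\le 1$ is the main obstacle, since containment of two distinct curves in a common one-holed torus alone permits any intersection number. The key observation is that for the standard model $x_1=c_i$, $x_2=c_{i+1}$ the three genus-$1$ curves $\sigma_1=s_{\{i-1,i\}}$, $\sigma_2=s_{\{i,i+1\}}=s$, $\sigma_3=s_{\{i+1,i+2\}}$ have $(\sigma_1,\sigma_2)$ and $(\sigma_2,\sigma_3)$ sharing pairs (of type (i)) sharing $c_i$ and $c_{i+1}$ respectively, so $\widetilde\phi(x_1)$ and $\widetilde\phi(x_2)$ are precisely the cores of the two annuli $T_{\phi(\sigma_2)}\cap T_{\phi(\sigma_1)}$ and $T_{\phi(\sigma_2)}\cap T_{\phi(\sigma_3)}$ inside $T_{\phi(\sigma_2)}$. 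The plan is to pin down the slopes of these cores by showing that the preserved data — the two sharing-pair relations together with the preserved intersection pattern among $\phi(\sigma_1),\phi(\sigma_2),\phi(\sigma_3)$ — forces $\phi(\sigma_1),\phi(\sigma_2),\phi(\sigma_3)$ into the standard position of such a triple (via Lemma \ref{lem:sharing-pair}), from which one reads off that the two annulus cores meet exactly once. I expect this step to require the most care, and it is the analogue of the corresponding computation in \cite{BM}; the case with one curve in $B$ is handled identically after replacing the relevant $s$-curves by the corresponding $u$-curves, and the general case reduces to these models by Lemma \ref{lem:permute}.
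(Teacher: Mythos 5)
Your part (i) and the lower bound $i(\widetilde\phi(x_1),\widetilde\phi(x_2))\ge 1$ in part (ii) follow the paper's proof essentially verbatim: for (i), separate the two curves by distinct, disjoint genus-$1$ members of sharing pairs built from chain spines, and for the lower bound, note that two distinct nonseparating curves in the one-holed torus bounded by $\phi(s)$ must intersect.

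The gap is in your upper bound for part (ii). You claim that the two sharing-pair relations $(\phi(\sigma_1),\phi(\sigma_2))$, $(\phi(\sigma_2),\phi(\sigma_3))$ together with the preserved intersection pattern among the three curves $\phi(\sigma_j)$ force the triple into ``standard position,'' from which $i(\widetilde\phi(x_1),\widetilde\phi(x_2))=1$ could be read off. This does not follow: everything preserved by $\phi$ is of the form ``disjoint vs.\ intersecting'' and records no multiplicities. For any $n\ge1$ one can place nonseparating curves $a,b$ with $i(a,b)=n$ in the one-holed torus bounded by $\phi(\sigma_2)$ and extend each to a sharing-pair configuration (attach a rectangle outside the torus along an annular neighborhood of $a$, resp.\ $b$, keeping the complement connected); the resulting curves $\phi(\sigma_1),\phi(\sigma_3)$ intersect each other and admit certifying curves as in Lemma \ref{lem:sharing-pair}, so none of the data you list rules out $n>1$. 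The missing ingredient --- and what the paper's proof actually uses --- is a longer chain: extend to $(x_0,x_1,x_2,x_3,x_4)$, apply part (i) to conclude that $\widetilde\phi(x_1)$ and $\widetilde\phi(x_3)$ are \emph{distinct and disjoint}, and observe that the union $\Sigma_1\cup\Sigma_2$ of the one-holed tori for the sharing pairs of $x_1$ and $x_2$ is an annular neighborhood $A$ of $\widetilde\phi(x_2)$ with two rectangles attached along $\partial A$. If $i(\widetilde\phi(x_1),\widetilde\phi(x_2))>1$, then $\widetilde\phi(x_1)\cap A$ separates the two attaching intervals of the rectangle carrying $\widetilde\phi(x_3)$, forcing $i(\widetilde\phi(x_1),\widetilde\phi(x_3))\neq0$, a contradiction. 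You should replace the ``standard position'' step with this argument; the same chain trick (with the chain of Figure \ref{fig:lem-chains}, right) handles the case $x_2\in B$.
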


\begin{proof}
\boxed{(i)} It suffices to observe that there exists $y_1,y_2\in C$ so that $i(x_j,y_k)=\delta_{jk}$ (Kronecker delta). For then one can find $z_1,z_2\in C$ so that $(y_j,x_j,z_j)$ is the spine for a sharing pair $(\alpha_j,\beta_j)$ for $x_j$ (this is illustrated in Figure \ref{fig:lem-chains0}); by construction $\alpha_1=\partial N(x_1\cup y_1)$ and $\alpha_2=\partial N(x_2\cup y_2)$ are distinct and disjoint. Then so too are $\widetilde\phi(\alpha_1)$ and $\widetilde\phi(\alpha_2)$ (by Remark \ref{rmk:injective}), and from this we deduce that $\widetilde\phi(x_1)$ and $\widetilde\phi(x_2)$ are disjoint and distinct because they lie in the disjoint genus-1 subsurfaces bounded by $\widetilde\phi(\alpha_1)$ and $\widetilde\phi(\alpha_2)$ (by Theorem \ref{thm:sharing-pair}).

\begin{figure}[h!]
\labellist
\pinlabel $x_1$ at 1078 2535
\pinlabel $y_1$ at 1130 2555
\pinlabel $z_1=z_2$ at 1155 2507
\pinlabel $y_2$ at 1180 2555
\pinlabel $x_2$ at 1205 2520
\pinlabel $\uparrow$ at 1155 2522
\endlabellist
\centering
\includegraphics[scale=.7]{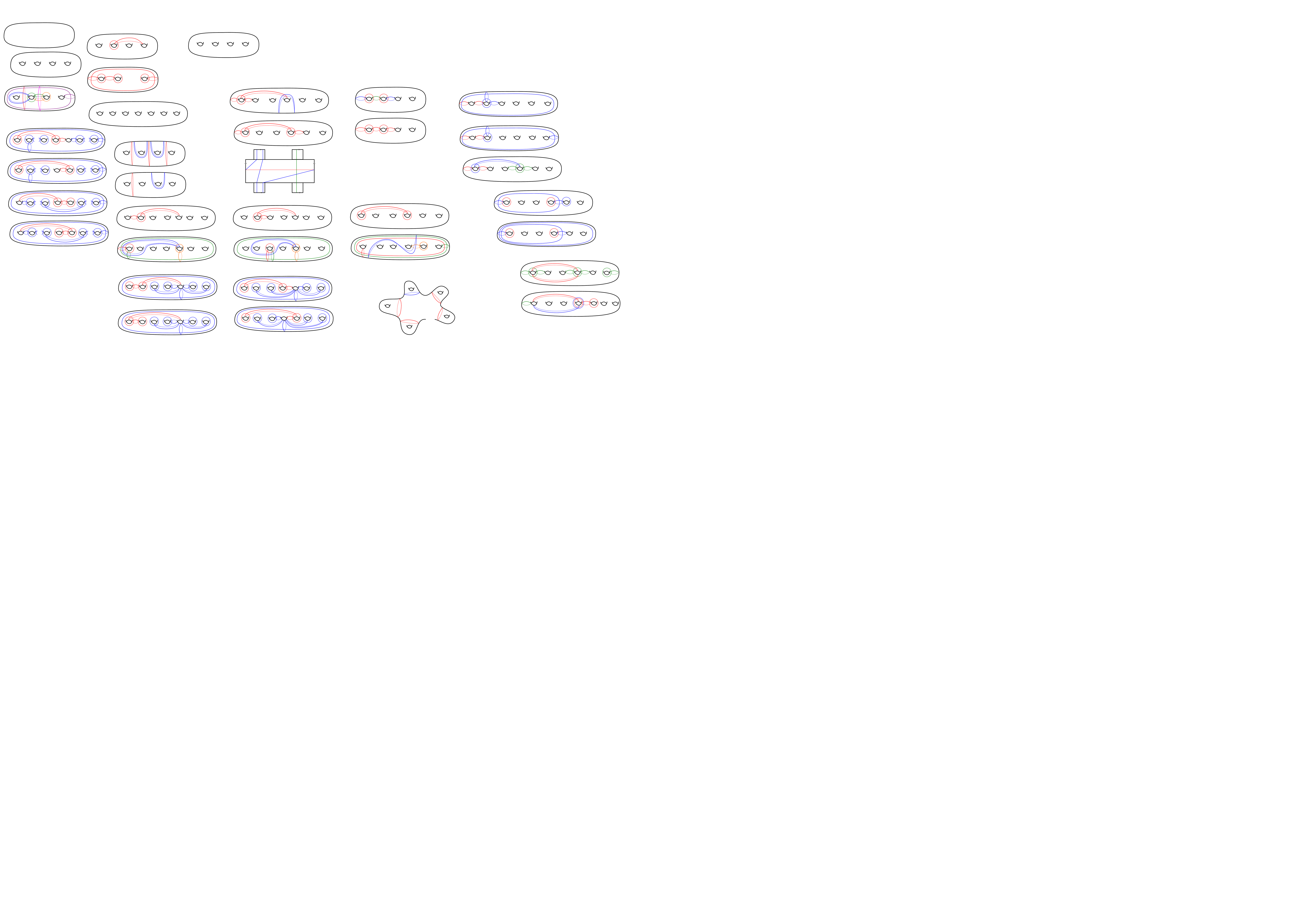}
\caption{}
\label{fig:lem-chains0}
\end{figure}


\boxed{(ii)} 
There are two cases to consider: either $x_1,x_2\in C$ or $x_1\in C$ and $x_2\in B$. 

First assume that $x_1,x_2\in C$. Up to a permutation of $X$, we can assume that $x_1=c_1$ and $x_2=c_2$. Consider the chain $(x_0,x_1,x_2,x_3,x_4)$ in Figure \ref{fig:lem-chains} (left).

\begin{figure}[h!]
\labellist
\pinlabel $x_0$ at 1078 2440
\pinlabel $x_1$ at 1130 2458
\pinlabel $x_2$ at 1155 2425
\pinlabel $x_3$ at 1180 2458
\pinlabel $x_4$ at 1205 2425
\pinlabel $x_0$ at 1340 2440
\pinlabel $x_1$ at 1388 2417
\pinlabel $x_2$ at 1460 2453
\pinlabel $x_3$ at 1518 2420
\pinlabel $x_4$ at 1555 2427
\pinlabel $\cdots$ at 1458 2435
\endlabellist
\centering
\includegraphics[scale=.7]{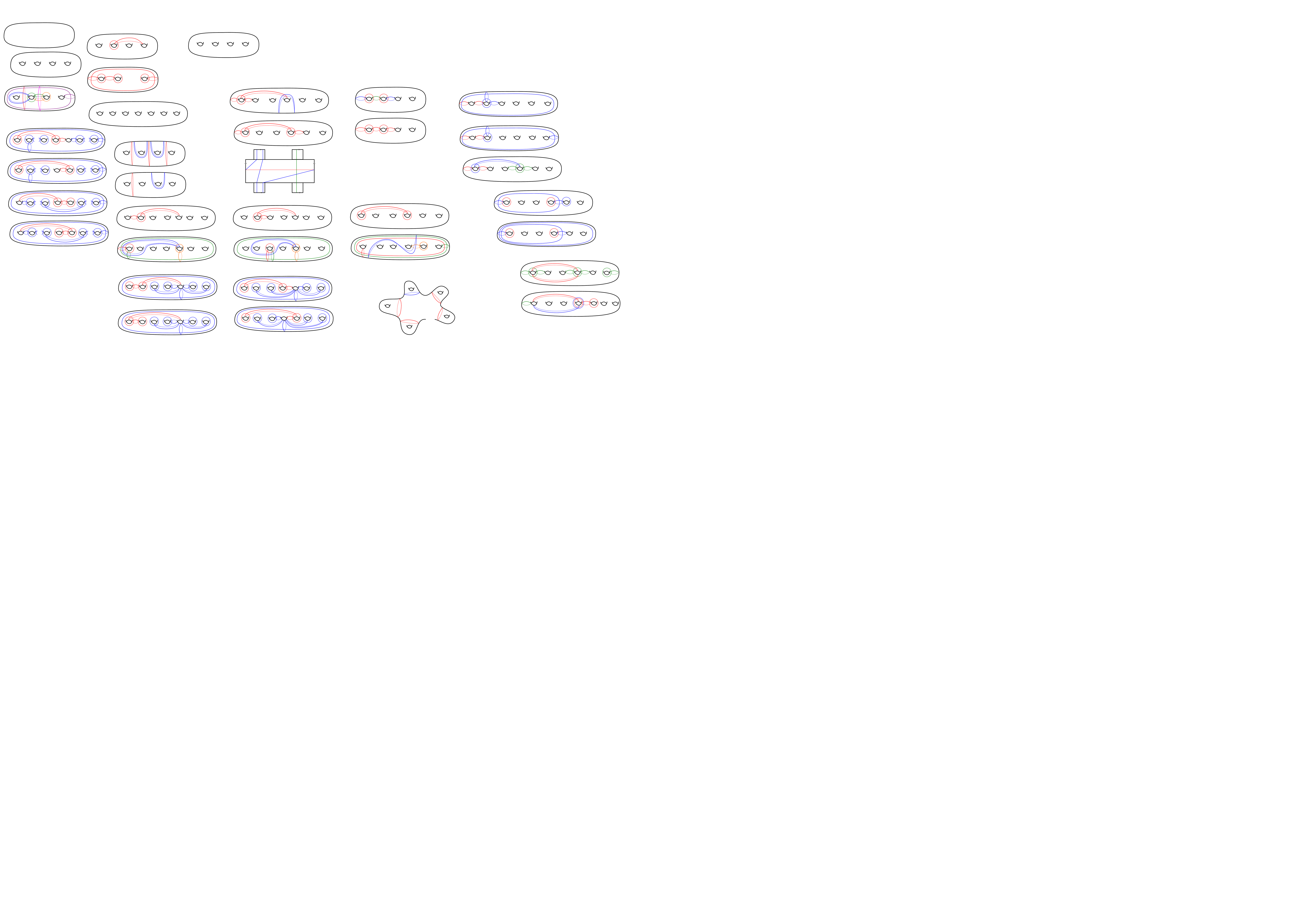}\hspace{.3in}
\includegraphics[scale=.7]{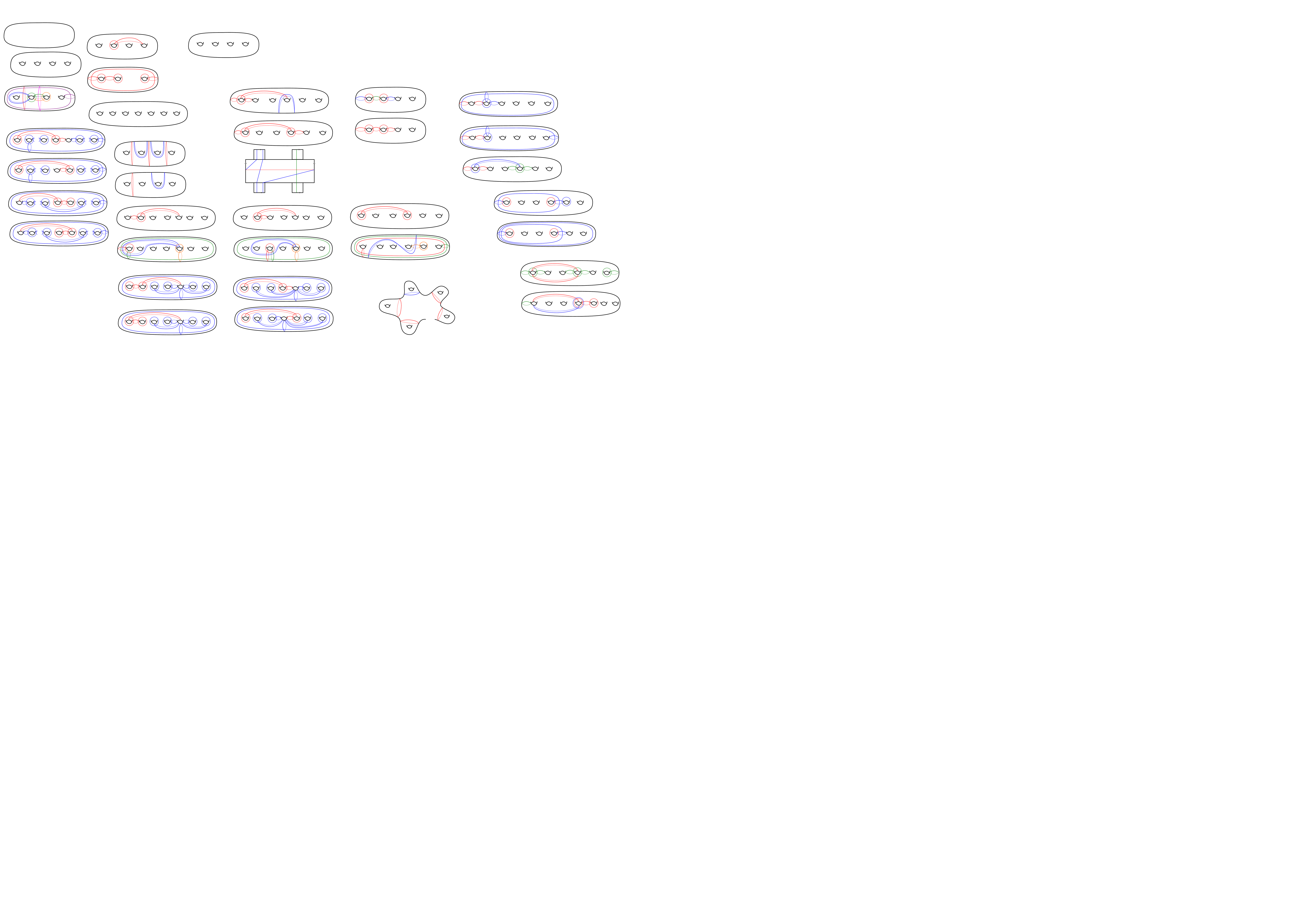}
\caption{}
\label{fig:lem-chains}
\end{figure}


Each triple $(x_{j-1},x_{j},x_{j+1})$ is a spine of a sharing pair $\alpha_j,\beta_j$ for $x_j$ (note with this notation that $\alpha_{j+1}=\beta_j$). By Theorem \ref{thm:sharing-pair} and the definition of $\widetilde\phi$, $\phi(\alpha_j),\phi(\beta_j)$ is a sharing pair for $\widetilde\phi(x_j)$. Denoting $\Sigma_j$ the genus-1 component of $\Sigma\setminus\widetilde\phi(\beta_j)$ for $j=1,2$, we deduce that $\widetilde\phi(x_j)$ and $\widetilde\phi(x_{j+1})$ are contained in $\Sigma_j$ (for example, $\widetilde\phi(\beta_1=\alpha_2)$ forms a sharing pair with both $\widetilde\phi(\alpha_1)$ and $\widetilde\phi(\beta_2)$, which implies that $\Sigma_1$ contains both $\widetilde\phi(x_1)$ and $\widetilde\phi(x_2)$). 

Since $\widetilde\phi(x_1)$ and $\widetilde\phi(x_3)$ are distinct and disjoint by part (i) of the lemma, it follows that $\widetilde\phi(x_1)\neq\widetilde\phi(x_2)$ and $\widetilde\phi(x_3)\neq\widetilde\phi(x_2)$. This implies that $\widetilde\phi(x_1),\widetilde\phi(x_3)$ both intersect $\widetilde\phi(x_2)$ (distinct curves on a torus with one boundary component must intersect). 

Since $\phi(\alpha_2=\beta_1),\phi(\beta_2)$ is a sharing pair for $\widetilde\phi(x_2)$, the subsurface $\Sigma_1\cup\Sigma_2$ is obtained from an annular neighborhood $A$ of $\widetilde\phi(x_2)$ by attaching two rectangles as pictured in Figure \ref{fig:lem-chains2}. 

\begin{figure}[h!]
\labellist
\pinlabel $\widetilde\phi(x_1)$ at 818 2290
\pinlabel $\widetilde\phi(x_2)$ at 855 2325
\pinlabel $\widetilde\phi(x_3)$ at 930 2288
\pinlabel $A$ at 745 2284
\endlabellist
\centering
\includegraphics[scale=.7]{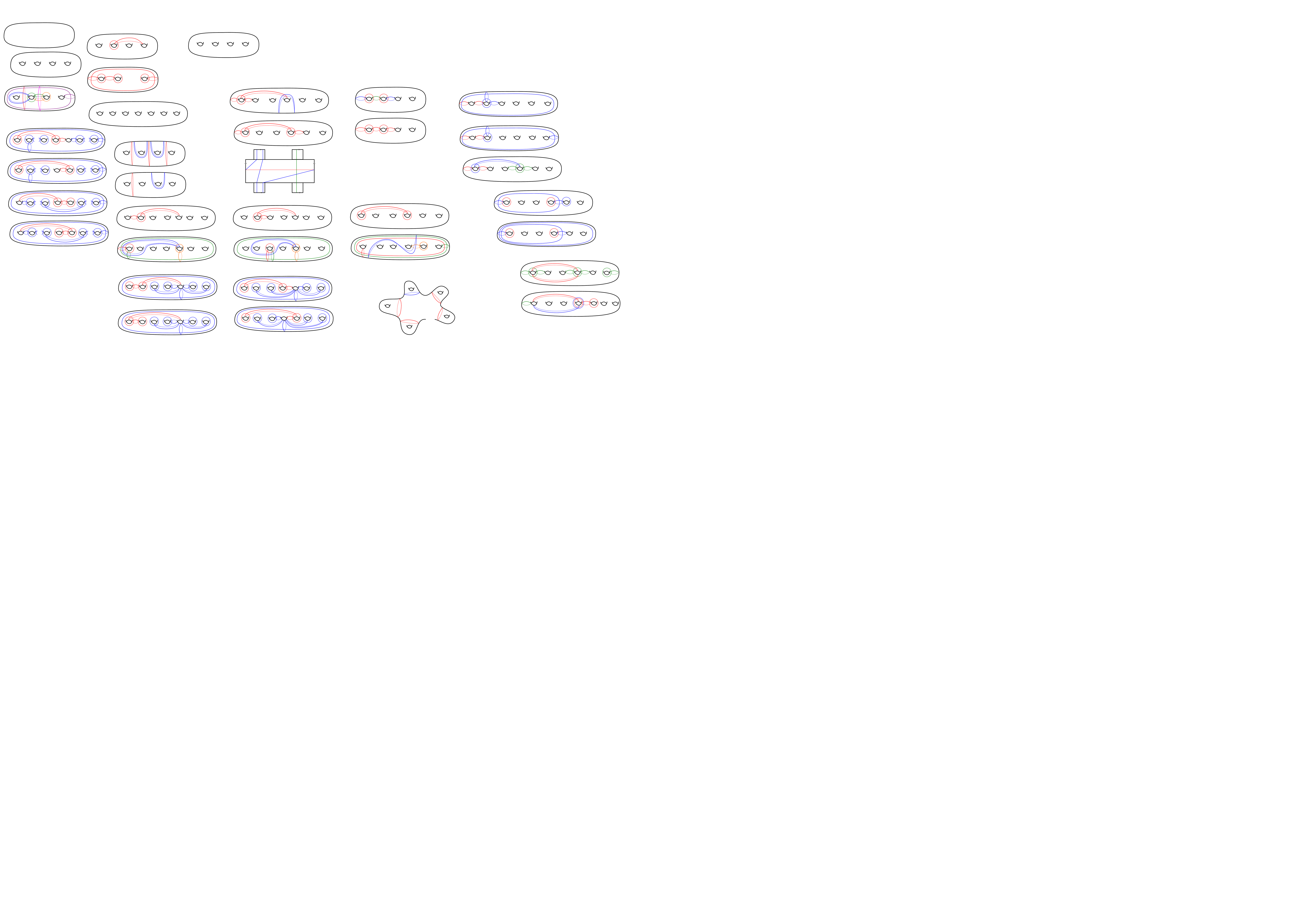}
\caption{The subsurface $\Sigma_1\cup\Sigma_2$ is homeomorphic to an annulus $A$ with two rectangles attached along the boundary.}
\label{fig:lem-chains2}
\end{figure}


Observe that if $i(\widetilde\phi(x_1),\widetilde\phi(x_2))>1$, then $\widetilde\phi(x_1)$ and $\widetilde\phi(x_3)$ are not disjoint, which is a contradiction (this is because the condition $i(\widetilde\phi(x_1),\widetilde\phi(x_2))>1$ implies that $\widetilde\phi(x_1)\cap A$ separates the two intervals in $\partial A$ where the rectangle for $\Sigma_2$ is attached). From this we conclude that $i(\widetilde\phi(x_1),\widetilde\phi(x_2))=1$, as desired. 

The proof in the case $x\in C$ and $y\in B$ is entirely similar, except we work with the chain $(x_0,\ldots,x_4)$ pictured in Figure \ref{fig:lem-chains} (right). 
\end{proof}

\begin{proof}[Proof of Proposition \ref{prop:distance1}]
We consider cases depending on whether or not $\alpha,\beta$ are separating, i.e.\ whether or not $\alpha,\beta\in Y^s$. 

\paragraph{Case 1:} $\alpha\in Y^s$ and $\beta\in Y^s$. We already know that $\widetilde\phi(\alpha)$ and $\widetilde\phi(\beta)$ are disjoint because $\rest{\widetilde\phi}{Y^s}=\phi$, and $\phi$ is incidence preserving. Furthermore, $\widetilde\phi(\alpha)\neq\widetilde\phi(\beta)$ by Remark \ref{rmk:injective}. 

\paragraph{Case 2:} $\alpha\in Y^s$ and $\beta\notin Y^s$. 

First observe that it suffices to find a sharing pair $\gamma,\delta$ for $\alpha$ such that $(\gamma,\delta)$ is on the list in Theorem \ref{thm:sharing-pair} and $\gamma$ is disjoint from $\beta$. For then $\phi(\gamma),\phi(\delta)$ is a sharing pair for $\widetilde\phi(\alpha)$ (by Theorem \ref{thm:sharing-pair} and by definition of $\widetilde\phi$), which implies in particular that $\phi(\gamma)$ is a genus-1 curve with $\widetilde\phi(\alpha)$ contained in the genus-1 component of $\Sigma\setminus\phi(\gamma)$, and since $\widetilde\phi(\beta)=\phi(\beta)$ is a separating curve disjoint from $\phi(\gamma)$, this implies that $\phi(\beta)$ lies in the genus-$(g-1)$ component of $\Sigma\setminus\phi(\gamma)$. Therefore $\widetilde\phi(\alpha)$ and $\widetilde\phi(\beta)$  are both disjoint and distinct since they lie in different components of $\Sigma\setminus\phi(\gamma)$.  

\paragraph{Case 2(i):} Assume that $\alpha\in C$. Without loss of generality $\alpha=c_1$. Consider $(c_0,c_1,c_2)$, which is the spine of a sharing pair for $\alpha=c_1$. If $\beta$ is disjoint from either $c_0$ or $c_1$, then we are done. 

{\it Claim.} If $\beta$ intersects both $c_0$ and $c_2$, then $\beta$ has one of the following forms (see also Figure \ref{fig:prop-disjoint})
\[\partial N(c_3\cup c_4\cup b_{[0,2]}^\pm\cup c_{2g+1})\>\>\>\text{ or }\>\>\> \partial N(c_3\cup b_{[0,2]}^\pm\cup c_{2g+1}\cup c_{2g})\>\>\>\text{ or }\>\>\> \partial N(c_1\cup b_{[2,2k]}^{\pm})\]
\[\text{ or }\>\>\>\partial N(c_1\cup b_{[2,2k]}^{\pm}\cup c_{2k+1}\cup c_{2k})\>\>\>\text{ or }\>\>\>\partial N(c_1\cup b_{[2,2k]}^{\pm}\cup c_{2k+1}\cup c_{2k+2}).\]
The details of this claim are not hard to check: since $\beta\in S\cup U\cup V$, we can write $\beta=\partial N(x_1\cup\cdots\cup x_\ell)$, where $\ell$ is even, the $x_j$ belong to $C\cup B$, and at most one $x_j$ is in $B$; in order for $\beta$ to intersect $c_0$ and $c_2$, at least one of the $x_j$ has to intersect $c_0$ and similarly for $c_2$, and none of the $x_j$ can be equal to $c_0$ or $c_2$; examining the possibilities yields the claim.  

\begin{figure}[h!]
\labellist
\pinlabel $c_0$ at 1395 2520
\pinlabel $c_2$ at 1470 2530
\pinlabel $\beta$ at 1505 2500
\pinlabel $...$ at 1605 2520
\pinlabel $...$ at 1675 2430
\endlabellist
\centering
\includegraphics[scale=.6]{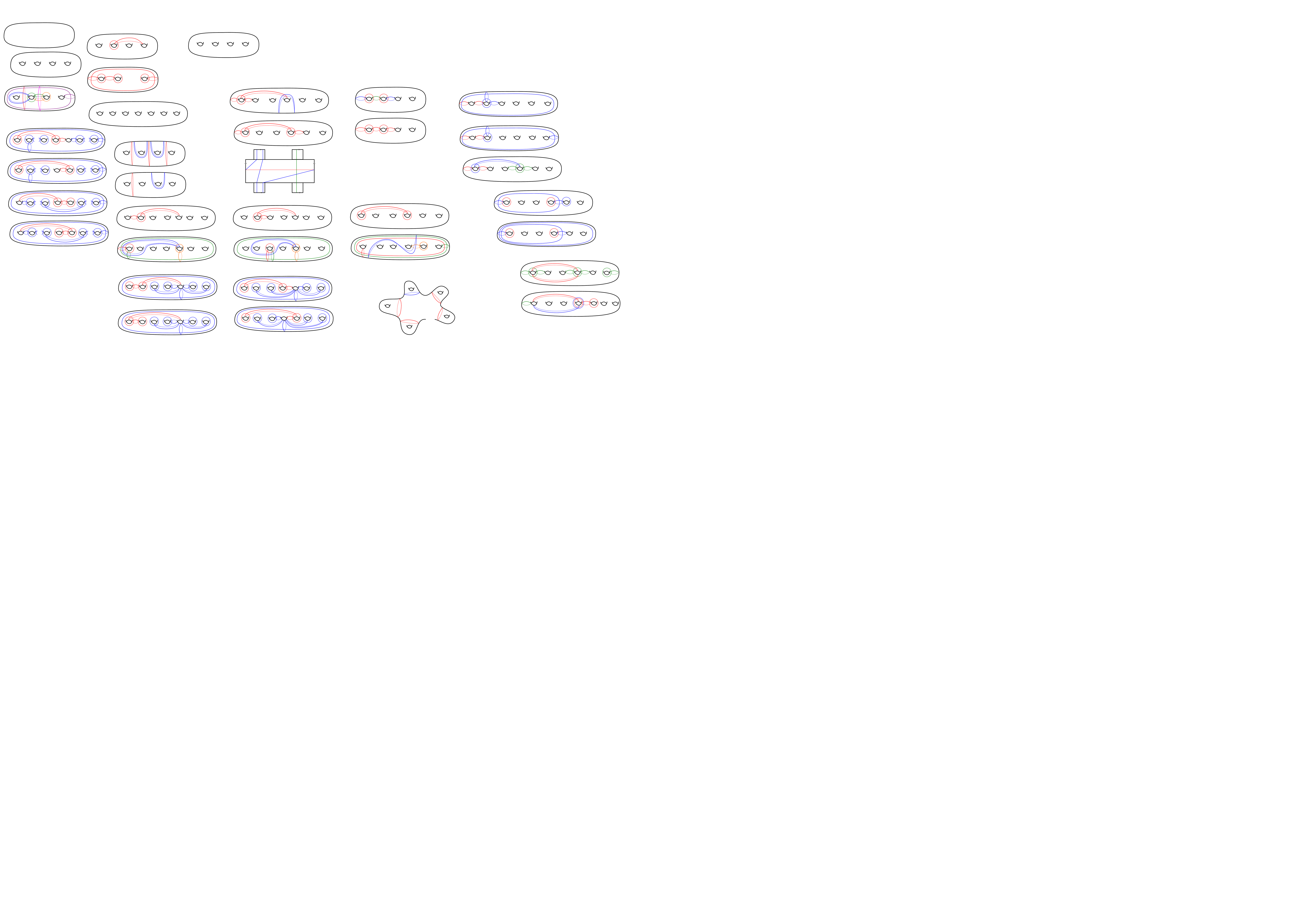}
\includegraphics[scale=.6]{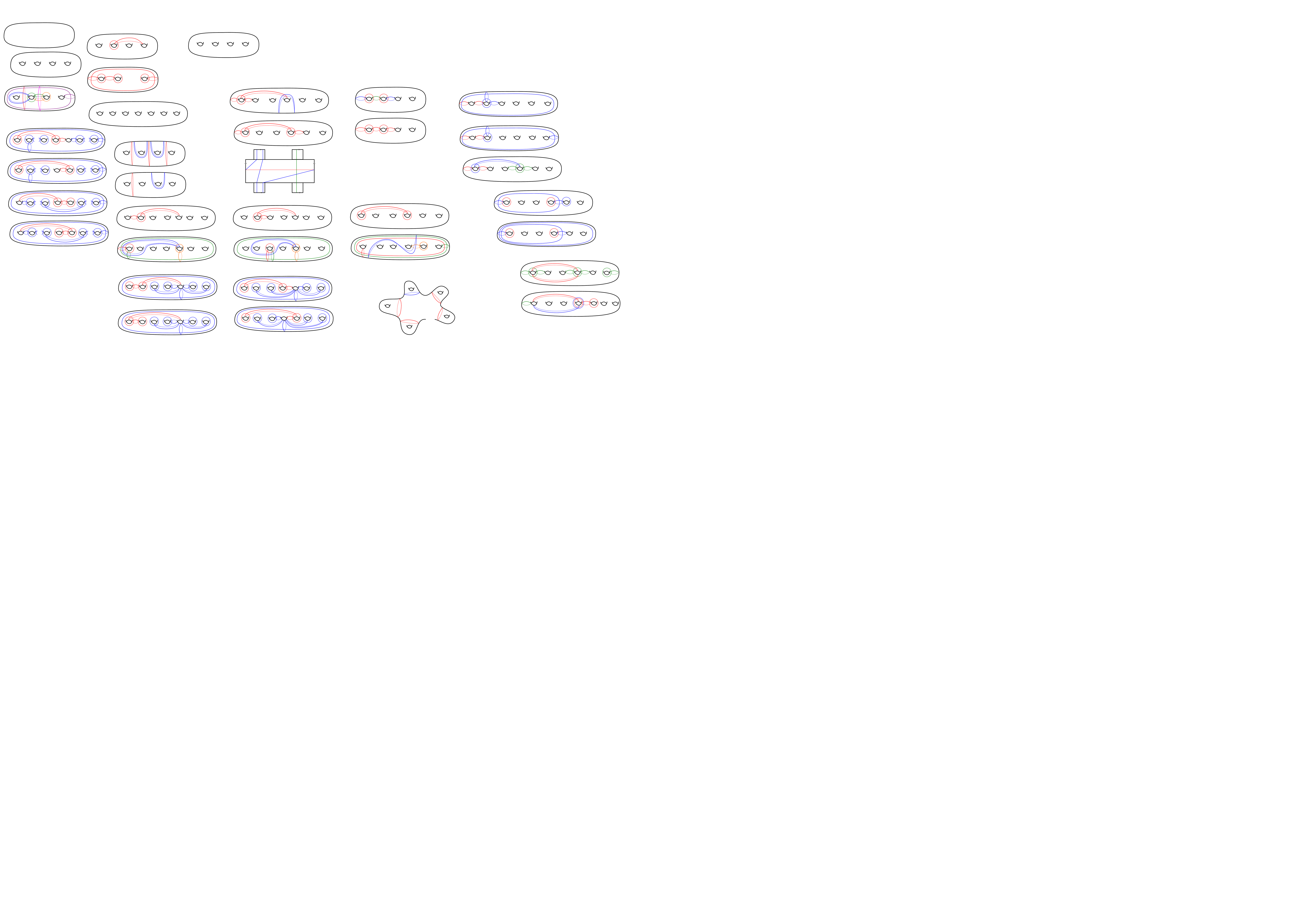}
\includegraphics[scale=.6]{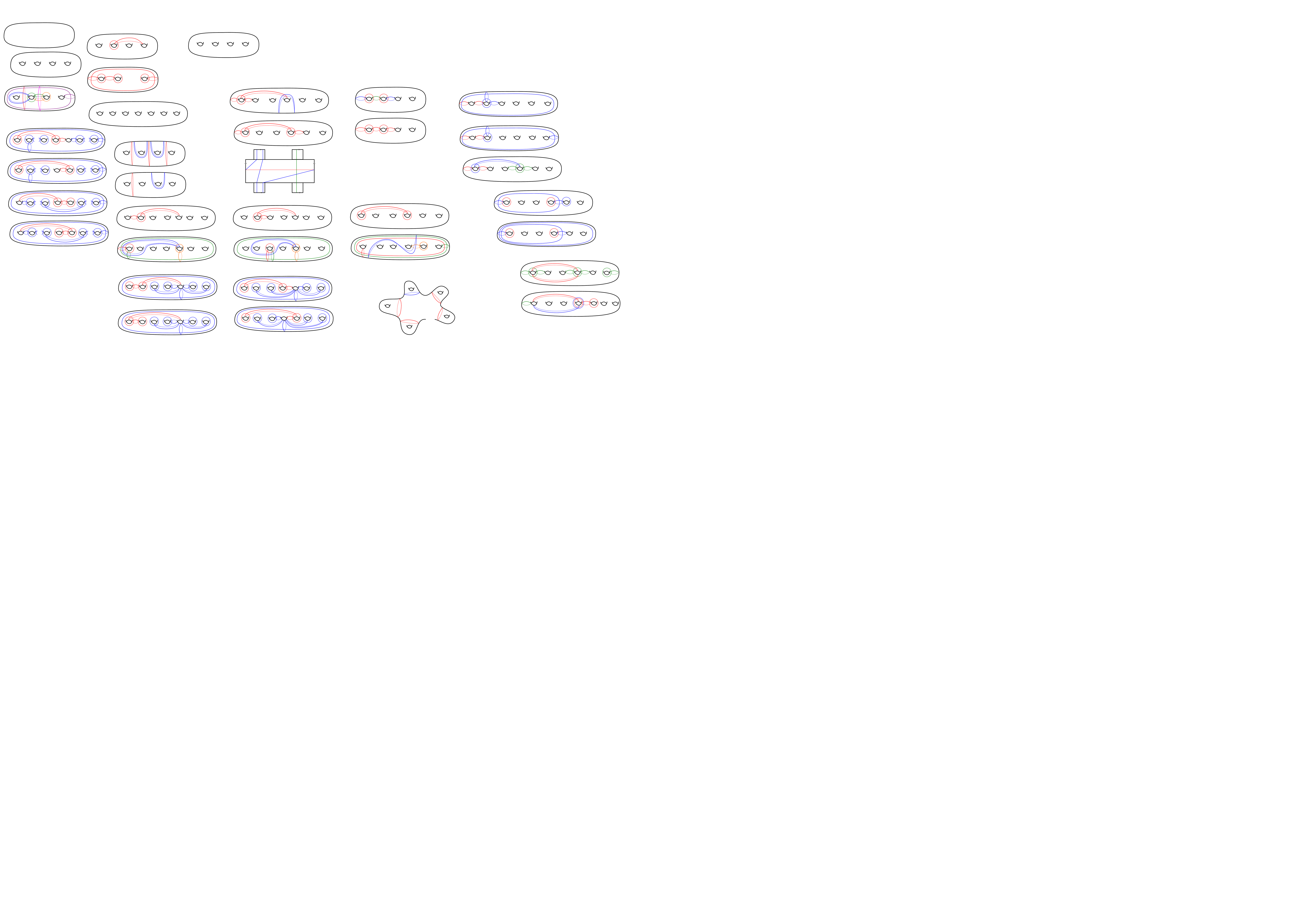}
\caption{Curves $\beta$ in $Y^s$ that intersect both $c_0$ and $c_2$. In the bottom figure, one can either ignore the two green curves or include two adjacent green curves in defining $\beta$.}
\label{fig:prop-disjoint}
\end{figure}


Observe that in every case there is a different spine $(c_0,c_1,\star)$ for a sharing pair for $\alpha=c_1$ with $\beta$ disjoint from $\star$, so again we are done. In the first two examples in Figure \ref{fig:prop-disjoint}, one can choose $\star=b_{[2,4]}^-$. In the last case, one can take $\star =b_{[3,j]}^+$. 

\paragraph{Case 2(ii):} Assume that $\alpha\in B$. Without loss of generality $\alpha=b_{[2,2k]}^+$. Consider the spine $(c_1,\alpha,c_{2k+1})$ for a sharing pair for $\alpha$ (note: there is only one sharing pair for $\alpha$ in $Y^s$). As in Case 2(i), if $\beta$ disjoint from either $c_1$ or $c_{2k+1}$, then we are done, so assume $\beta$ intersects both $c_1$ and $c_{2k+1}$. Examining the cases, we conclude that either 
\[\beta=\partial N(c_0\cup b_{[1,2k+1]}^\pm\cup c_{2k+2}\cup c_{2k+3})\]
or 
\[\beta=\partial N(c_{2g+1}\cup c_0\cup b_{[1,2k+1]}^\pm\cup c_{2k+2}).\]
See the Figure \ref{fig:prop-disjoint2}.

\begin{figure}[h!]
\labellist
\pinlabel $c_1$ at 1570 2200
\pinlabel $c_{2k+1}$ at 1665 2200
\pinlabel $\alpha$ at 1620 2230
\pinlabel $\beta$ at 1715 2190
\pinlabel $...$ at 1622 2210
\pinlabel $...$ at 1942 2210
\endlabellist
\centering
\includegraphics[scale=.6]{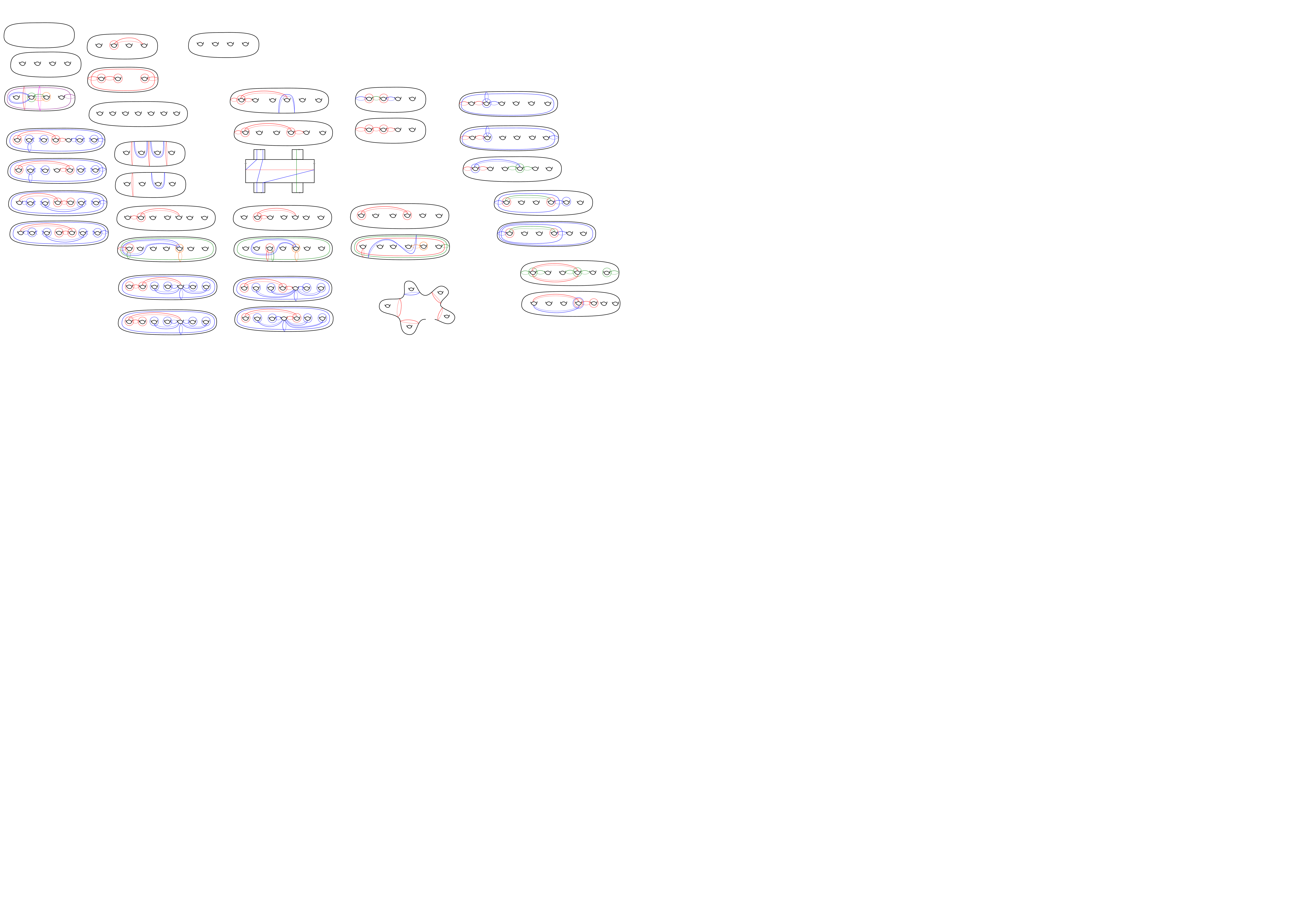}
\includegraphics[scale=.6]{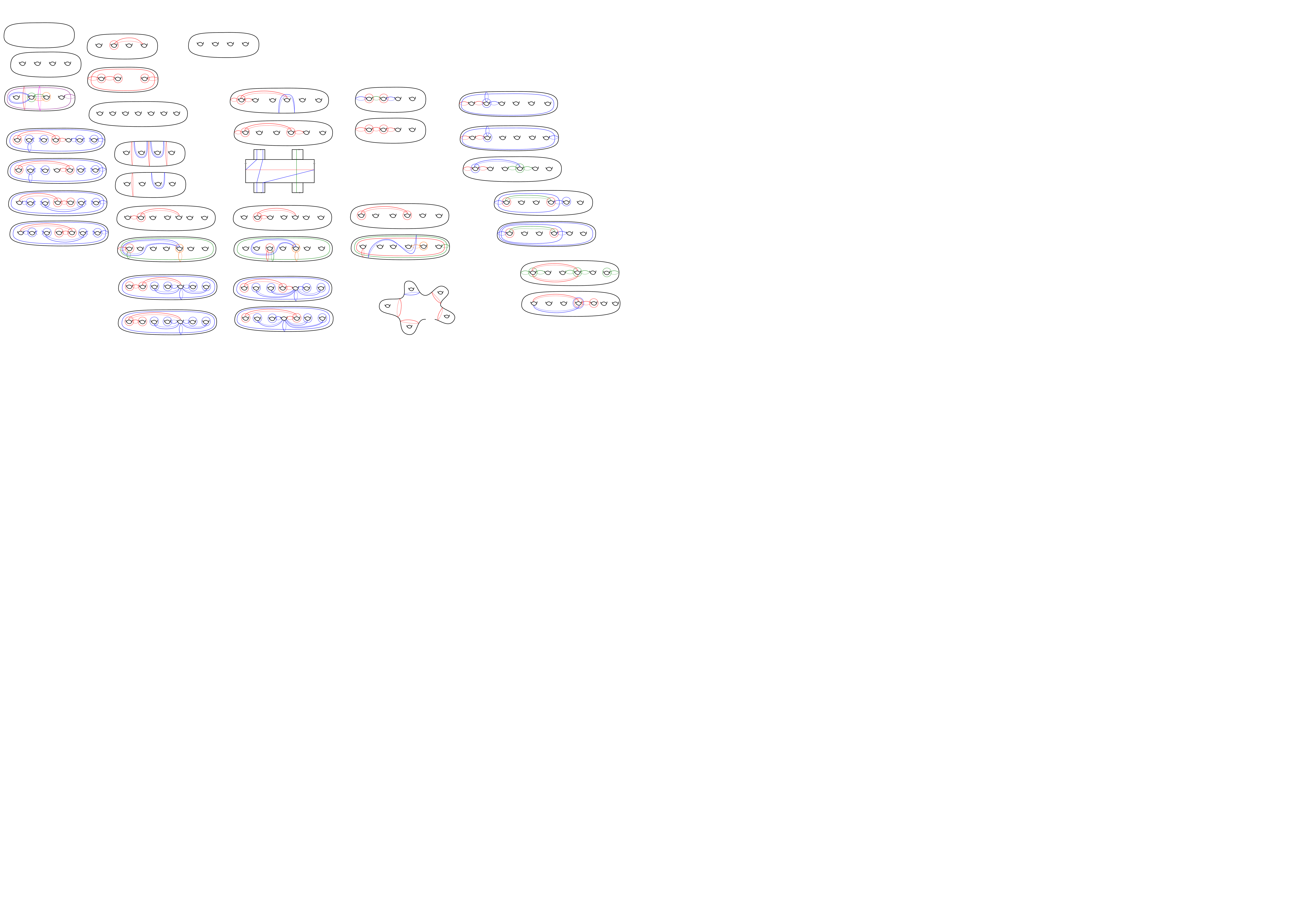}
\caption{Curves $\beta$ in $Y^s$ that intersect both $c_1$ and $c_{2k+1}$ and are disjoint from $\alpha$. }
\label{fig:prop-disjoint2}
\end{figure}


These cases can be treated similarly. For concreteness, take the former definition of $\beta$. By Lemma \ref{lem:chains} and Theorem \ref{thm:genus}, the image of the chain $(c_0,b_{[1,2k+1]}^+, c_{2k+2}, c_{2k+3})$ under $\widetilde\phi$ is also a chain and is supported in the genus-2 component of $\Sigma\setminus\phi(\beta)$. Since $\widetilde\phi(\alpha)$ is disjoint from the curves in this chain (again by Lemma \ref{lem:chains}), we deduce that $\alpha$ is supported in the genus-$(g-2)$ component of $\Sigma\setminus\phi(\beta)$. This shows $\widetilde\phi(\alpha)$ and $\widetilde\phi(\beta)$ are disjoint. (They are necessarily distinct because one is separating and the other is nonseparating.)

\paragraph{Case 3:} $\alpha\notin Y^s$ and $\beta\notin Y^s$. 

The case when $\alpha$ and $\beta$ do not form a bounding pair is treated in Lemma \ref{lem:chains}. Assume then that $\alpha$ and $\beta$ form a bounding pair. Without loss of generality, assume $\alpha=b_{[2,2k]}^+$ and $\beta=b_{[2,2k]}^-$. 

First we show that $i(\widetilde\phi(\alpha),\widetilde\phi(\beta))=0$. Consider the chain $c=(c_0,\ldots,c_{2g})$. By Lemma \ref{lem:chains}, $\widetilde\phi(c)$ is also a chain and 
\[i(\widetilde\phi(\alpha),\widetilde\phi(c_j))=i(\widetilde\phi(\beta),\widetilde\phi(c_j))=\begin{cases}1&j=1,2k+1\\0&\text{else}\end{cases}\]
Since $\widetilde\phi(c)$ is a chain of length $2g+1$, its complement $\Sigma\setminus\widetilde\phi(c)$ is a union of two disks; similarly 
\begin{equation}\label{eqn:spheres}\Sigma\setminus\big[\widetilde\phi(c_0)\cup\widetilde\phi(c_2)\cup\cdots\cup\widetilde\phi(c_{2g})\big]\end{equation} is a union of two genus-0 surfaces, each with $g+1$ boundary components. Since $\widetilde\phi(\alpha)$ and $\widetilde\phi(\beta)$ are disjoint from each $\widetilde\phi(c_{2k})$, they define curves on (\ref{eqn:spheres}). If $\widetilde\phi(\alpha)$ and $\widetilde\phi(\beta)$ lie in different components of (\ref{eqn:spheres}), we are done. If $\widetilde\phi(\alpha)$ and $\widetilde\phi(\beta)$ lie on the same component, then they must be isotopic (there is only one curve on the punctured sphere with the given intersection data with the $\widetilde\phi(c_j)$). In any case, we have shown that $i(\widetilde\phi(\alpha),\widetilde\phi(\beta))=0$. 

It remains to show that $\widetilde\phi(\alpha)$ and $\widetilde\phi(\beta)$ are actually distinct. Suppose for a contradiction that $\widetilde\phi(\alpha)=\widetilde\phi(\beta)$. Consider the curves 
\[\gamma=\partial N(\underbrace{b_{[2,2k]}^+}_{\alpha},c_{2k+1},c_{2k+2},c_{2k+3})\>\>\>\text{ and }\>\>\>\delta=\partial N(\underbrace{b_{[2,2k]}^-}_\beta,c_{2k+1}).\] 
See Figure \ref{fig:prop-disjoint3}. 

\begin{figure}[h!]
\labellist
\pinlabel $\alpha$ at 1700 2020
\pinlabel $\beta$ at 1700 1980
\pinlabel $\gamma$ at 2020 2018
\pinlabel $\delta$ at 2020 1984
\pinlabel $...$ at 1703 2000
\pinlabel $...$ at 2020 2000
\endlabellist
\centering
\includegraphics[scale=.6]{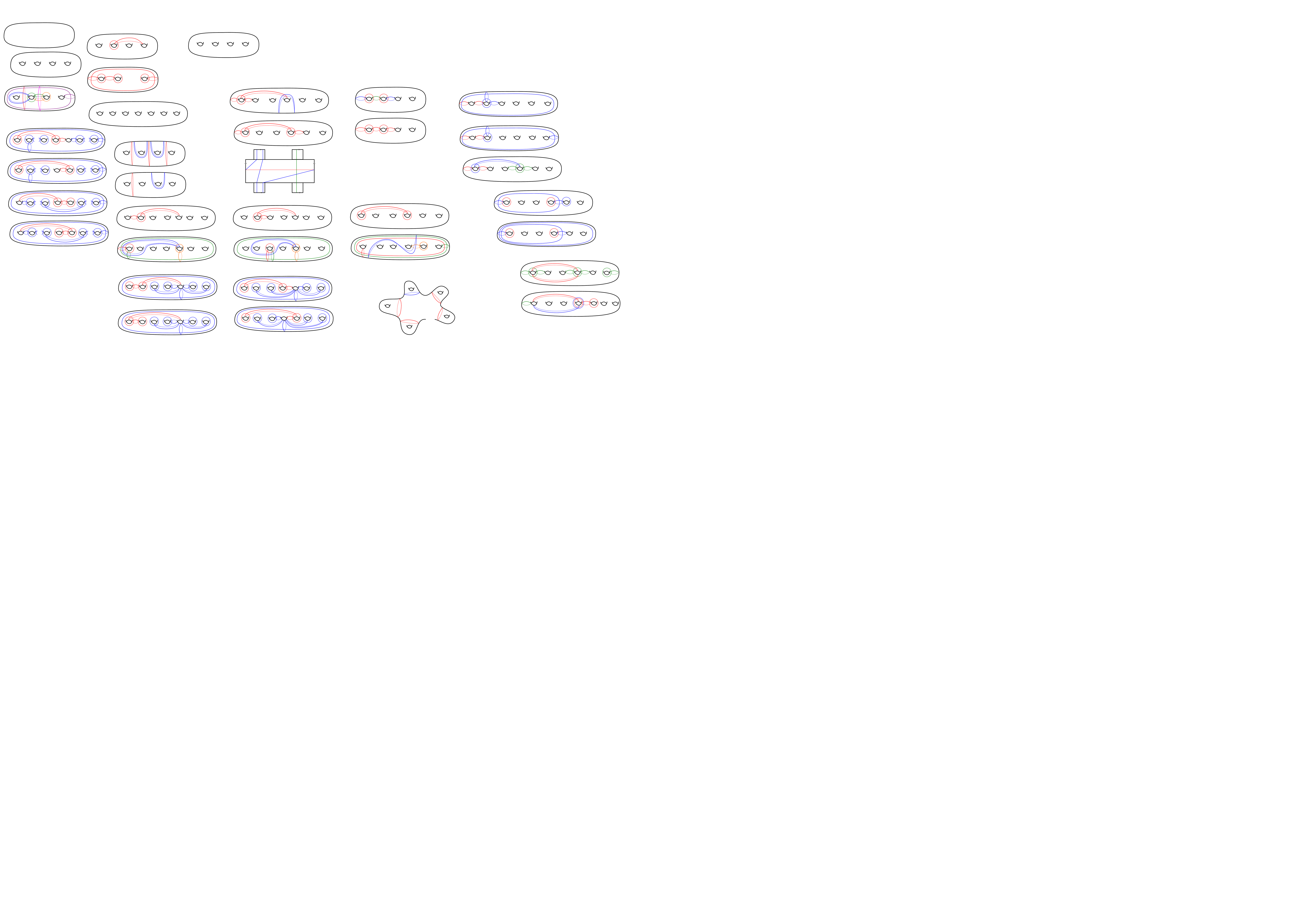}
\includegraphics[scale=.6]{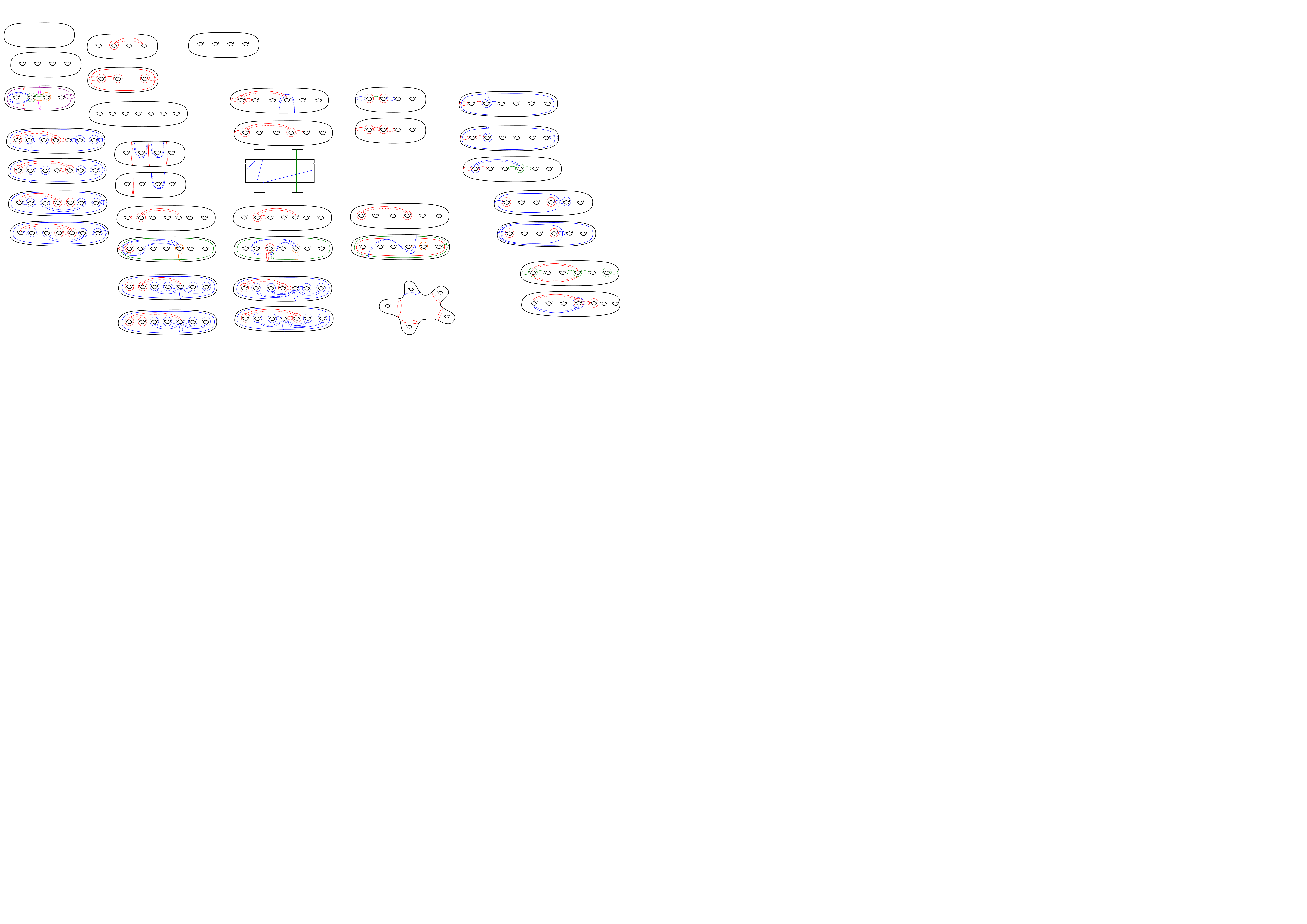}
\caption{Left: chain curves $c_0,\ldots,c_{2g}$ and the curves $\alpha,\beta$. Right: chains used to define $\gamma$ and $\delta$.}
\label{fig:prop-disjoint3}
\end{figure}

On the one hand, $\gamma,\delta\in Y^s$ intersect, so $\widetilde\phi(\gamma)=\phi(\gamma)$ and $\widetilde\phi(\delta)=\phi(\delta)$ also intersect. By Theorem \ref{thm:genus} and Lemma \ref{lem:chains}, $\phi(\delta)$ has genus 1 and $\widetilde\phi(\beta),\widetilde\phi(c_{2k+1})$ are a chain in the genus-1 component of $\Sigma\setminus\phi(\delta)$. Similarly, $\phi(\gamma)$ is a genus-2 curve that contains the chain $(\widetilde\phi(\alpha),\widetilde\phi(c_{2k+1}),\widetilde\phi(c_{2k+2}),\widetilde\phi(c_{2k+3}))$ in the genus-2 component of $\Sigma\setminus\phi(\gamma)$. In particular, $\widetilde\phi(\beta)=\widetilde\phi(\alpha)$ and $\widetilde\phi(c_{2k+1})$ are disjoint from $\widetilde\phi(\gamma)$, but this implies that $\phi(\delta)=\partial N(\widetilde\phi(\beta)\cup\widetilde\phi(c_{2k+1}))$ is disjoint from $\phi(\beta)$, a contradiction. This shows that $\widetilde\phi(\alpha)\neq\widetilde\phi(\beta)$, as desired. 
\end{proof}

\begin{proof}[Proof of Proposition \ref{prop:distance2}]
If $\alpha$ and $\beta$ are both separating, then the proposition follows from the fact that $\widetilde\phi$ extends $\phi$, and $\phi$ is incidence-preserving. If exactly one of $\alpha,\beta$ is separating, then the proposition holds because then exactly one of $\widetilde\phi(\alpha),\widetilde\phi(\beta)$ is separating. 

Now consider the case $\alpha,\beta\in C\cup B$ and both nonseparating. If $\alpha$, say, belongs to $C$, then we can apply Lemma \ref{lem:chains} to conclude that $i(\widetilde\phi(\alpha),\widetilde\phi(\beta))=1$, which implies that $\widetilde\phi(\alpha)\neq\widetilde\phi(\beta)$. If both $\alpha$ and $\beta$ belong to $B$, then $\alpha$ and $\beta$ do not form a bounding pair because we assume that $\alpha$ and $\beta$ intersect. This implies that $\alpha$ and $\beta$ have a different intersection pattern with the curves in $C$. Since this intersection pattern is preserved under $\widetilde\phi$ by Lemma \ref{lem:chains}, this implies $\widetilde\phi(\alpha)\neq\widetilde\phi(\beta)$. 
\end{proof}

\section{A finite rigid set of separating curves for $g=3$}\label{sec:genus3}

In this section, let $\Sigma$ be a closed oriented surface of genus 3. As is observed in \S\ref{sec:introduction}, we can take a single vertex in $\mathcal{C}^s(\Sigma)$ to be a finite rigid set because every separating curve in $\Sigma$ is genus-1. However, larger finite rigid sets may also exist and identifying them may shed some light on Question \ref{q:homology} and Question \ref{q:diameter}. We will present explicitly another finite rigid set $X^s\subset\mathcal{C}^s(\Sigma)$ similar to the ones constructed in \S\ref{sec:rigid-set} for $g\geq4$. The homotopy type of $X^s$ is determined as well.

The collections of curves $C,S,B$ and $U$ on $\Sigma$ are defined as in \S\ref{sec:rigid-set}. Let 
\begin{equation}\label{eqn:rigid-set-3}
X=C\cup S\cup B\cup U.
\end{equation}
and
\begin{equation}\label{eqn:rigid-set-sep-3}
X^s=S\cup U
\end{equation}

In fact $X$ is the finite rigid set for $\Sigma$ defined in \cite{AL} and $X^s=X\cap\mathcal{C}^s(\Sigma)$. Here is the main results of this section. 

\begin{thm}\label{thm:rigid-set-sep3}
Let $\Sigma$ be a closed, oriented surface of genus $3$ and set $X^s\subset\mathcal{C}^s(\Sigma)$ be the subcomplex defined by (\ref{eqn:rigid-set-sep-3}). Then the simplicial complex $X^s$ has the homotopy type of a wedge of $21$ circles, and any incidence-preserving map $\phi:X^s\to\mathcal{C}^s(\Sigma)$ is induced by a mapping class.
\end{thm}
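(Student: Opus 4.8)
The plan is to establish the two assertions of Theorem \ref{thm:rigid-set-sep3} separately: first the homotopy type, then the rigidity. These are logically independent, so I would treat them in order.

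\textbf{Rigidity.} The cleanest approach mirrors the genus-$\ge4$ argument but is substantially shorter because every separating curve in $\Sigma_3$ has genus $1$. Given an incidence-preserving map $\phi:X^s\to\mathcal C^s(\Sigma)$, I would first note that $\phi$ is injective: one checks directly (as in Remark \ref{rmk:injective}) that $X^s$ satisfies the hypothesis of Lemma \ref{lem:injective-criterion}, so Lemma \ref{lem:injective-criterion} applies. Since all curves in $X^s$ and all separating curves in $\Sigma_3$ are genus-$1$, the genus-preservation step (statement (A) in \S4) is automatic and no analogue of statements (C) or (E) is needed. The key remaining input is that $\phi$ preserves the relevant sharing pairs. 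Here the genus-$2$ certifying curve $z$ in Lemma \ref{lem:sharing-pair} is unavailable (in genus $3$ there are no genus-$2$ separating curves), so I cannot invoke Corollary \ref{cor:sharing-pair} directly; instead I would give an ad hoc verification that for each sharing pair $(\alpha,\beta)$ of type (i) in Theorem \ref{thm:sharing-pair} whose spine is a length-$3$ chain in $C$, the pair $(\phi(\alpha),\phi(\beta))$ is again a sharing pair, using the characterization of \cite[Lem.\ 4.1]{BM} adapted to genus $3$ together with the remark following Lemma \ref{lem:sharing-pair}. With sharing pairs preserved, I would then define the extension $\widetilde\phi:X\to\mathcal C(\Sigma)$ exactly as in \S5, sending each nonseparating $y\in C\cup B$ to the curve shared by $\phi(\alpha),\phi(\beta)$ for a sharing pair of $y$, and verify well-definedness via the move argument of Lemma \ref{lem:well-defined}. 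Finally, since $X=C\cup S\cup B\cup U$ is the Aramayona--Leininger rigid set, once $\widetilde\phi$ is shown to be locally injective (via the analogues of Propositions \ref{prop:distance1} and \ref{prop:distance2}, which simplify greatly as there are no genus-$2$ curves to track and no statements (E), (F) to invoke), Theorem \ref{thm:AL} produces the extended mapping class inducing $\widetilde\phi$, and hence $\phi$.

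\textbf{Homotopy type.} To identify $X^s$ with a wedge of $21$ circles, I would first determine that $X^s$ is a graph, i.e.\ $1$-dimensional: a maximal simplex of $X^s$ corresponds to a maximal collection of disjoint genus-$1$ separating curves in $\Sigma_3$, and since $\Sigma_3$ admits at most two disjoint nonisotopic genus-$1$ separating curves (cutting off genus-$1$ and genus-$2$ pieces, then the genus-$2$ piece admits one more), each maximal simplex is an edge, so $X^s$ is a $1$-complex. Being a connected graph, $X^s$ is homotopy equivalent to a wedge of $1-\chi(X^s)$ circles, where $\chi(X^s)=V-E$ counts vertices (the curves in $S\cup U$) and edges (disjoint pairs among them). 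I would then carry out the vertex and edge count explicitly using the combinatorics of $C=\{c_0,\dots,c_7\}$ (indices mod $8$) from \S\ref{sec:rigid-set}: enumerate $S=\{s_J:|J|\text{ even}\}$ and $U$, tabulate which pairs are disjoint, and confirm $E-V+1=21$. I would also check connectedness (which follows since each $u$-curve is disjoint from some $s$-curve and the $s$-curves form a connected subgraph under the dihedral symmetry of Lemma \ref{lem:permute}).

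\textbf{The main obstacle.} The hardest part is the sharing-pair preservation step in the rigidity argument, precisely because Corollary \ref{cor:sharing-pair} cannot be applied verbatim: the certifying curve $z$ must be genus-$2$, which forces the use of the $g\ge4$ hypothesis in Lemma \ref{lem:sharing-pair}. I expect I would need to either reprove a genus-$3$ version of \cite[Lem.\ 4.1]{BM} (finding an alternative set of certifying curves that exist in genus $3$), or argue more directly that an incidence-preserving injective map on this small explicit complex must send the specified length-$3$ chain spines to configurations that still share a curve. The explicit vertex/edge count for the homotopy type is routine but error-prone, and getting exactly $21$ requires care.
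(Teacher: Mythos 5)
Your rigidity outline is broadly consistent with the paper, which at this point simply asserts that ``the same method used to prove Theorem \ref{thm:rigid-set} applies''; your observation that Lemma \ref{lem:sharing-pair} needs a genus-$2$ certifying curve $z$ and hence cannot be quoted verbatim in genus $3$ is a legitimate point that the paper does not spell out, and your proposed workarounds are reasonable. The problem is in your homotopy-type argument.

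Your claim that $\Sigma_3$ admits at most two pairwise disjoint, nonisotopic genus-$1$ separating curves is false: a genus-$2$ subsurface with one boundary component contains \emph{two} disjoint one-holed tori, so $\Sigma_3$ contains three pairwise disjoint one-holed tori and hence three pairwise disjoint genus-$1$ separating curves. Consequently $X^s$ is not a graph. Indeed, the paper shows that each of the $16$ vertices $u_i^\pm\in U$ is disjoint from exactly two vertices $s_{i+2},s_{i-3}$ of $S$, which are themselves disjoint, so these triples span $16$ two-simplices. Your proposed computation $\chi=V-E$ together with ``connected graph $\Rightarrow$ wedge of $1-\chi$ circles'' therefore fails at both steps: with $V=24$ and $E=60$ it would return $37$ circles rather than $21$, and the homotopy-equivalence statement for graphs does not apply to a $2$-complex. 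The paper's route is to verify that each $2$-simplex has a free edge (the edges joining $U$ to $S$ each lie in a unique $2$-simplex, while only the edges $\{s_i,s_{i+5}\}$ lie in two), so $X^s$ collapses onto a $1$-complex, and then to compute $\chi(X^s)=24-60+16=-20$, giving a wedge of $21$ circles. You would need to add the enumeration of the $2$-simplices and this collapsing argument (or some substitute for it) to make your count correct.
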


\begin{proof}
The same method used above to prove Theorem \ref{thm:rigid-set} can be applied to verify that $X^s$ is rigid. Below we prove that $X^s$ is homotopy equivalent to a wedge of 21 circles.

First we understand the vertices of $X^s$, starting with $S\subset X^s$. (For this, it is helpful to recall the notation from \S\ref{sec:rigid-set}.) Note that every curve $s_J\in S$ has the form $s_{\{i,i+1\}}$, and there are eight of these curves. To see this, first observe that in genus 3, there are $8$ chain curves, so curves in $S$ have the form $s_J$ where $|J|\in\{2,4,6\}$. If $|J|=6$, then $s_J$ is null-homotopic, and if $|J|=4$, then $s_J=s_{J'}$ where $|J'|=2$ (concretely, if $J=[j,j+3]$, then $J'=\{j+5,j+6\}$). For simplicity, we denote 
\[s_i:=s_{\{i,i+1\}}\>\>\>\text{ for }\>\>\>i=0,\ldots,7.\]
Similarly, we find that the curves in $U\subset X^s$ all have the form 
\[u_i^\pm:=\partial N(c_{i}\cup b_{\{i+1,i+2,i+3\}}^\pm)\>\>\>\text{ for }\>\>\>i=0,\ldots,7.\]
(Note in particular that $b_{\{i+1,i+2,i+3\}}^\pm=b_{\{i-3,i-2,i-1\}}^\pm$ for each $i$.) Therefore $X^s=S\cup U$ has 24 vertices. 

Next we consider the edges in $X^s$. It is easy to check the following incidence relations 
\[
i(s_k,s_j)\text{ is }\begin{cases}
=0&\text{if }|k-j|>2\\
\neq0&\text{else}
\end{cases}
\hspace{.5in}
i(u_k,s_j)\text{ is }\begin{cases}
=0&\text{if }j=k+2\text{ or }j=k-3\\
\neq0&\text{else}
\end{cases}
\]
and 
\[
i(u_k^\pm,u_j^\epsilon)\text{ is }\begin{cases}
=0&\text{if }j=k\pm2 \text{ and }\epsilon=\mp\\
\neq0&\text{else}
\end{cases}
\]
Here $|k-j|$ denotes the distance in $\Z/8\Z=\{0,\ldots,7\}$. For checking this, it is helpful to observe that if $a=\partial N(x\cup y)$ and $b=\partial N(z\cup w)$ are distinct genus-1 curves (here $x,y$ and $z,w$ are chains of length 2), then $i(a,b)=0$ if and only if $x\cup y$ is disjoint from $z\cup w$. 

From the intersection data, we find that the subcomplex $S\subset X^s$ is 1-dimensional (see Figure \ref{fig:thm-homotopy} (left)), and that the subcomplex $U\subset X^s$ is the disjoint union of 4 simplicial circles, each with 4 vertices. These circles are pictured in Figure \ref{fig:thm-homotopy2}.

Finally, each vertex of $U$ is connected to two vertices of $S$, and together these three vertices span a 2-simplex in $X^s$. See Figure \ref{fig:thm-homotopy} (right). Observe that the edges $\{s_i,s_{i+5}\}$ each belong to two 2-simplices (the other vertices are $u_{i+3}^\pm$), while each edge connecting a vertex in $U$ to a vertex in $S$ belongs to a single 2-simplex. Therefore, each 2-simplex of $X^s$ has a ``free edge", which implies that $X^s$ deformation retracts to a 1-dimensional complex.  The simplicial complex $X^s$ has 24 vertices, 60 edges and 16 faces, so it has Euler characteristic
\[\chi(X^s)=24-60+16=-20\]
and we conclude that $X^s$ is homotopy equivalent to a wedge of 21 circles.
\begin{figure}[h!]
\labellist
\pinlabel $s_0$ at 180 1700
\pinlabel $s_1$ at 260 1700
\pinlabel $s_5$ at 180 1490
\pinlabel $s_4$ at 260 1490
\pinlabel $s_7$ at 105 1630
\pinlabel $s_6$ at 105 1560
\pinlabel $s_2$ at 335 1630
\pinlabel $s_3$ at 335 1560
\pinlabel $u_7^+$ at 582 1620
\pinlabel $u_3^+$ at 582 1564
\pinlabel $u_5^-$ at 555 1595
\pinlabel $u_1^-$ at 610 1595
\pinlabel $s_1$ at 540 1695
\pinlabel $s_4$ at 620 1695
\pinlabel $s_5$ at 540 1490
\pinlabel $s_0$ at 620 1490
\pinlabel $s_7$ at 475 1630
\pinlabel $s_2$ at 475 1560
\pinlabel $s_3$ at 690 1630
\pinlabel $s_6$ at 690 1560
\endlabellist
\centering
\includegraphics[scale=.5]{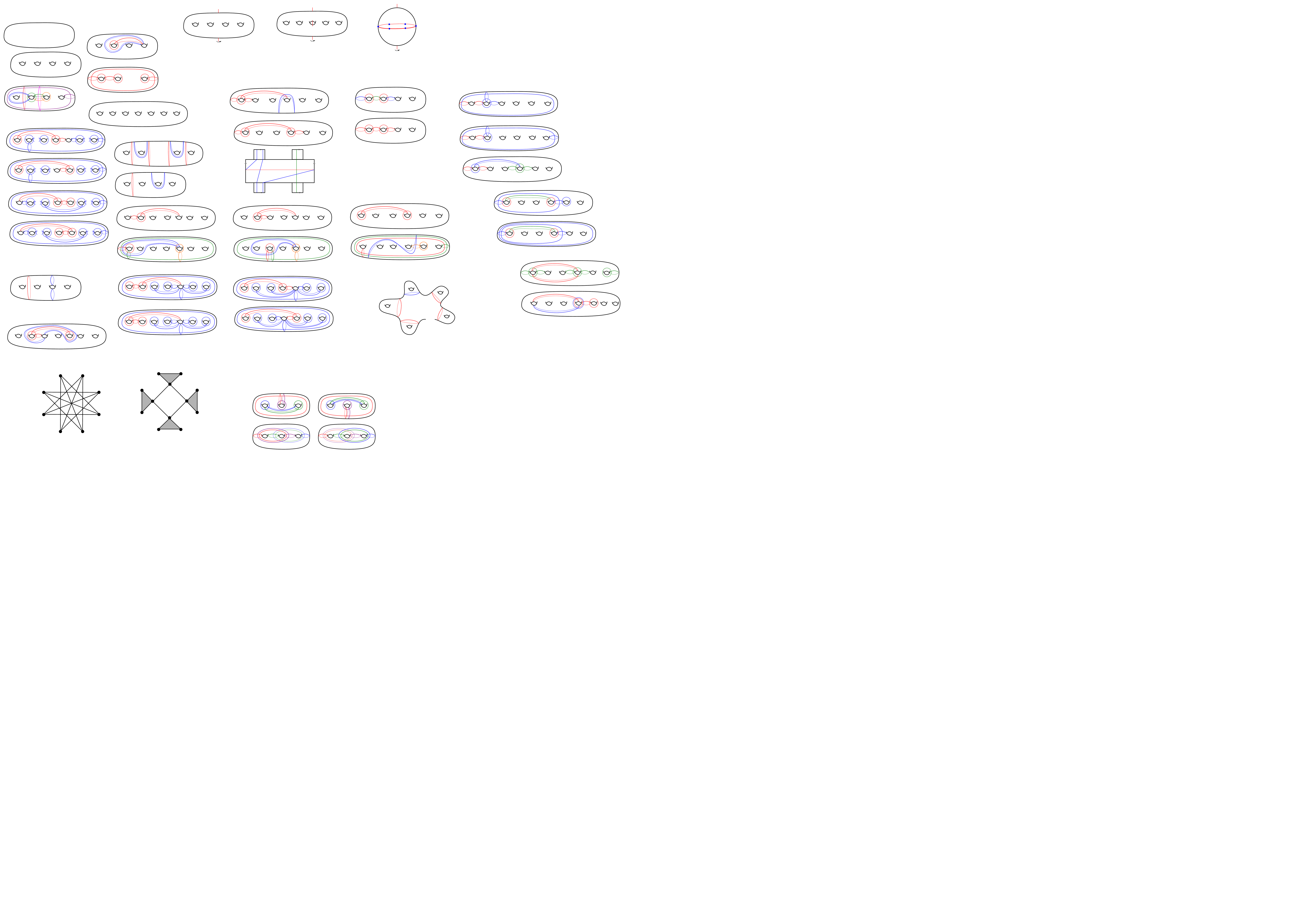}\hspace{1in}
\includegraphics[scale=.5]{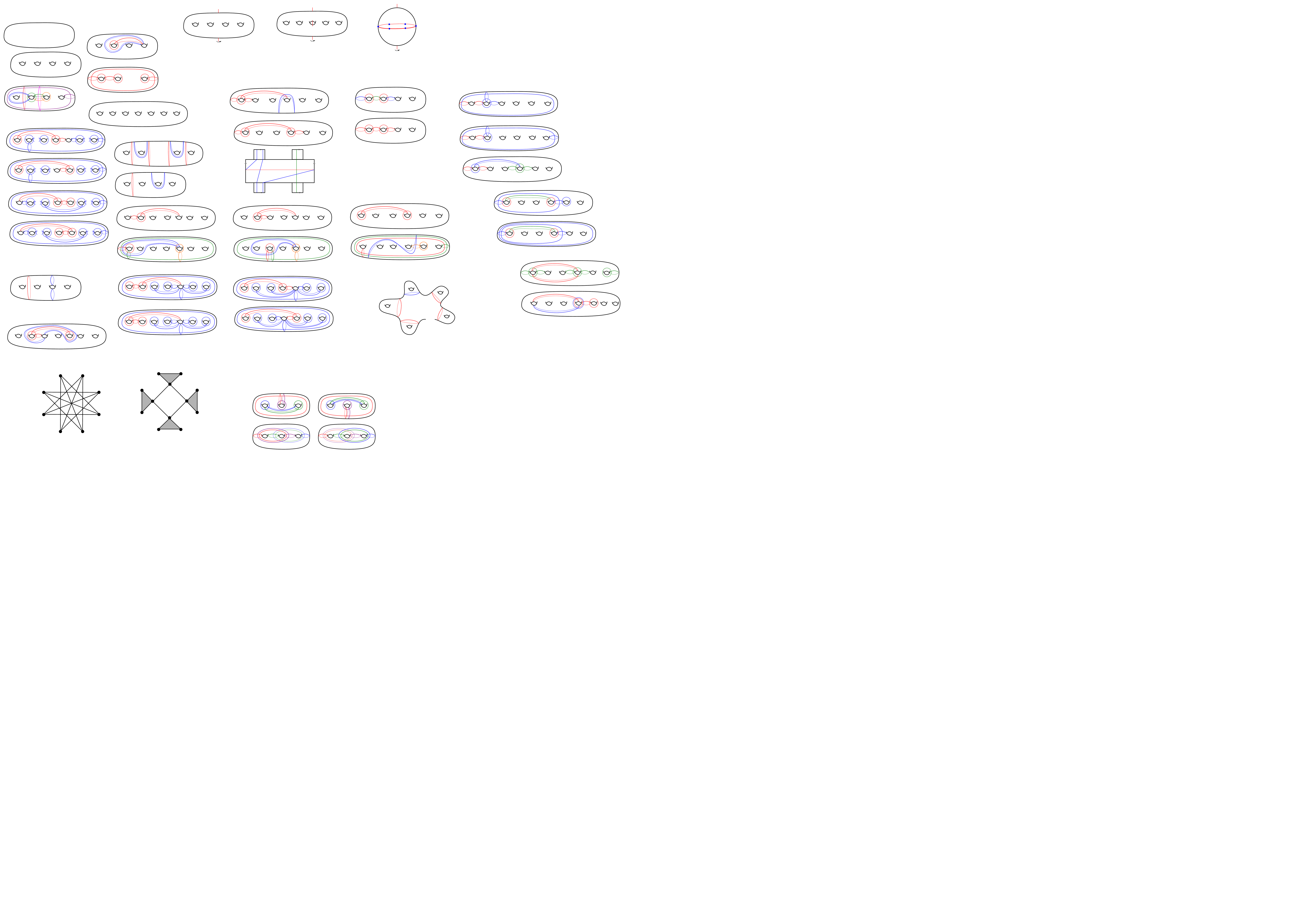}
\caption{Left: the subcomplex $S\subset X^s$. Right: the circle $\{a,b,c,d\}$ is connected to $S$ by four 2-simplices.}
\label{fig:thm-homotopy}
\end{figure}
\end{proof}

\begin{figure}[h!]
\labellist
\small
\endlabellist
\centering
\includegraphics[scale=.6]{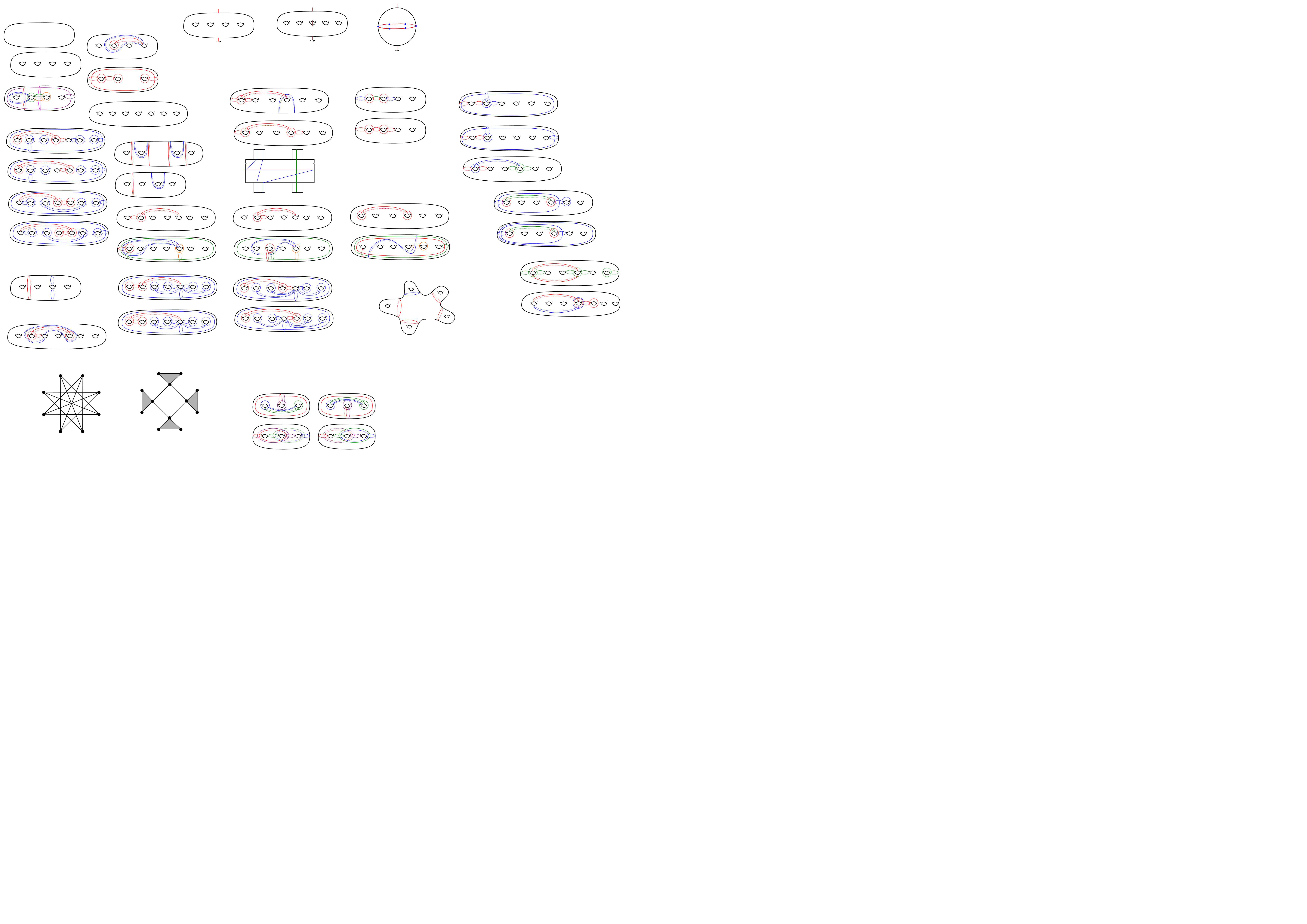}
\caption{Each frame is a circle in $U$ with four vertices. In total, $U$ is the disjoint union of these circles. }
\label{fig:thm-homotopy2}
\end{figure}

%
%

\bibliographystyle{amsalpha}
\bibliography{refs}

Junzhi Huang\\
Peking University\\
\texttt{huangjz42@pku.edu.cn}

Bena Tshishiku\\
Department of Mathematics, Brown University\\ 
\texttt{bena\_tshishiku@brown.edu}

\end{document}